\title[]{MIT bag in non-smooth convex domains}
\author[]{Konstantin Pankrashkin}
\address{\tiny Carl von Ossietzky Universit\"at Oldenburg, Fakult\"at V -- Mathematik und Naturwissenschaften, Institut f\"ur Mathematik, Ammerl\"ander Heerstr. 114--118, 26129 Oldenburg, Germany}
\email{konstantin.pankrashkin@uol.de}
\dedicatory{Dedicated to the memory of my doctoral co-advisor Jochen Br\"uning (1947--2025)}
\keywords{Dirac operator, boundary condition, convex domain, self-adjointness}
\subjclass[2020]{Primary: 35P20, Secondary: 81Q05, 35F45, 47B25}
\urladdr{http://uol.de/pankrashkin}
\newcommand{\one}{\mathbbm{1}}
\newcommand{\cH}{\mathcal{H}}
\newcommand{\RR}{\mathbb{R}}
\newcommand{\CC}{\mathbb{C}}
\newcommand{\NN}{\mathbb{N}}
\newcommand{\cD}{\mathcal{D}}
\newcommand{\cE}{\mathcal{E}}
\newcommand{\cP}{\mathcal{P}}
\newcommand{\SSS}{\mathbb{S}}
\newcommand{\eps}{\varepsilon}
\newcommand{\rmi}{\mathrm{i}}
\newcommand{\cc}{\complement}
\DeclareMathOperator{\supp}{\mathrm{supp}}
\DeclareMathOperator{\dist}{\mathrm{dist}}
\DeclareMathOperator{\ran}{\mathrm{ran}}
\DeclareMathOperator{\dom}{\mathrm{dom}}
\newtheorem{theorem}{Theorem}
\newtheorem{corollary}[theorem]{Corollary}
\newtheorem{lemma}[theorem]{Lemma}
\theoremstyle{definition}
\newtheorem{definition}[theorem]{Definition}
\newtheorem{remark}[theorem]{Remark}
\renewcommand{\Tilde}{\widetilde}
\renewcommand{\Hat}{\widehat}
\newcommand{\dd}{\mathrm{d}}
\begin{document}

\begin{abstract}
The Dirac operator with MIT bag boundary condition in a bounded convex domain
is shown to be always self-adjoint in the $H^1$-setting. This allows one to show that such operators appear as limit of Dirac operators with large positive mass outside the domain. Similar results were previously known for smooth domains only.
\end{abstract}

\maketitle


\section*{Introduction}

For $n\ge 2$ and $N:=2^{[\frac{n+1}{2}]}$ let $\alpha_1,\dots,\alpha_{n},\beta$ be pairwise anticommuting Hermitian $N\times N$ matrices with $\alpha_k^2=\beta^2=I_N$, where $I_N$ is the $N\times N$ identity matrix (see Appendix~\ref{appa} for a possible explicit choice). The Euclidean Dirac operator $D_m$ with a mass $m\in\RR$ acts on vector functions $f:\RR^n\to \CC^N$ by the differential expression
\begin{equation}
	\label{eqedm}
	D_m f:=-\rmi\sum_{k=1}^n \alpha_k \partial_k f + m \beta f,
\end{equation}
and it is a central object in the relativistic quantum mechanics, see e.g. \cite{thaller}.

Now let $\Omega\subset\RR^n$ be a bounded domain with sufficiently regular (e.g. Lipschitz) boundary and outer unit normal $\nu$. By the MIT bag operator
with mass $m$ in $\Omega$ one means the operator $A^\Omega_m$ acting in $L^2(\Omega,\CC^N)$ as $f\mapsto D_m f$
on the functions $f$ satisfying the MIT bag boundary condition $f=-\rmi\beta\alpha\cdot\nu f$ on $\partial\Omega$, where
we use the writing
\[
\alpha\cdot x=\sum_{k=1}^n x_k \alpha_k \text{ for any }x=(x_1,\dots,x_n)\in\RR^n
\]
and denote a function on $\Omega$ and its trace on $\partial\Omega$ by the same symbol for the sake of readability.
The precise regularity of the functions $f$ in the operator domain guaranteeing ``good'' properties of $A^\Omega_m$ will be discussed below in greater detail.

The Dirac operator with the above boundary condition appeared initially in the physics literature \cite{bogol,chod,john}, in particular, as a model of quark confinement in hadrons. One of the interesting features observed is that the MIT bag operator appears as the limit of Dirac operators in the whole space with a large mass applied outside $\Omega$ \cite{BM}, which explains the fact that the MIT bag boundary condition is often referred to as the ``infinite mass boundary condition'' (and it is usually considered as an analog of the Dirichlet boundary condition for the Laplacian). While the study in the physics literature has a long story, it seems that the mathematically rigorous analysis of the operator $A^\Omega_m$ only became active in the last decade. For $\Omega$ with smooth boundary and $n\in\{2,3\}$ several authors \cite{ALTR,BHM,beng, OBV} have shown
that the above operator $A^\Omega_m$ is self-adjoint if considered on the  domain
\begin{equation}
	\label{doma00}
\big\{ f\in H^1(\Omega,\CC^N):\ f=-\rmi \beta \alpha\cdot \nu f \text{ on } \partial\Omega\big\}.
\end{equation}
As noted in \cite{mobp}, the result holds in any dimension as a corollary of more general results on manifolds with boundaries \cite{baer,baer2}. Moreover, if one denotes by $E_j(T)$ the $j$-th eigenvalue of a lower semibounded operator $T$ (assuming the usual numbering in the non-decreasing order with multiplicities counted), for each $j\in\NN$
one has
\begin{equation}
	\label{abm}
E_j\big((A^\Omega_m)^2\big)=\lim_{M\to+\infty} E_j\big((B^\Omega_{m,M})^2\big),
\end{equation}
where $B^\Omega_{m,M}$ is the Dirac operator in $L^2(\RR^n,\CC^N)$ with mass $m$ in $\Omega$ and mass $M$ in $\Omega^\cc$,
\begin{equation}
	\label{bmm}
\begin{aligned}
	B^\Omega_{m,M}:&\ f\mapsto D_0 f +  (m \one_{\Omega} + M \one_{\Omega^\cc})\beta f \equiv D_m f+(M-m)\one_{\Omega^\cc}\beta f,\\
	&\dom B^\Omega_{m,M}=H^1(\RR^n,\CC^N),
\end{aligned}
\end{equation}
see \cite{SV} for $n=2$, or \cite{ALTMR} for $n=3$, or \cite{mobp} for arbitrary $n$.
The relation \eqref{abm} is usually viewed as the mathematical expression of the infinite mass limit mentioned above.
Note that there are some extensions to unbounded domains \cite{BCLTS} and spin manifolds \cite{flam},
and some estimates for the rate of convergence are available as well \cite{ALTMR,BBZ}.
It should be noted that the proofs of the convergence are mainly based on the expression
\begin{equation}
	\label{qf-am}
\|A^\Omega_m f\|^2_{L^2(\Omega,\CC^N)}=\int_\Omega \Big( |\nabla f|^2 +m^2|f|^2\Big)\,\dd x+\int_{\partial\Omega}\Big(m+\dfrac{H}{2}\Big) |f|^2\,\dd \cH^{n-1}, \quad f\in\dom A^\Omega_m,
\end{equation}
where $H: x\mapsto \mathop{\mathrm{tr}}\dd|_x\nu$ is the mean curvature of $\partial\Omega$ and $\cH^{n-1}$ is the $(n-1)$-dimensional Hausdorff measure, and on the use of tubular coordinates near $\partial\Omega$.
Remark that the same expression also plays a central role in the spectral optimization \cite{ANT}.

While there is a vast literature on boundary value problems for Dirac-type operators on domains/manifolds with smooth boundaries, see e.g.~\cite{baer,baer2,bl,bbb,grosse} and references therein, the case of non-smooth boundaries
is much less elaborated. The paper \cite{LTOB} studied the self-adjointness of $A^\Omega_m$ for the case $n=2$ when $\Omega$ is a polygon, and it was shown that the domain \eqref{doma00} leads to a self-adjoint operator if and only if the polygon is convex (more generally, the deficiency indices conicide with the number of concave corners). In \cite{CL} it was shown that $A^\Omega_m$ is self-adjoint on the domain \eqref{doma00} if $\Omega$ is a convex circular cone in three dimensions. 
As shown in \cite{BB1,BHSS,PV}, for bounded Lipschitz $\Omega$ and $n\in\{2,3\}$ the operator $A^\Omega_m$ becomes self-adjoint
if considered on the larger domain
\[
\big\{ f\in H^{\frac{1}{2}}(\Omega,\CC^N): \  D_m f\in L^2(\Omega,\CC^N),\ f=-\rmi \beta \alpha\cdot \nu f \text{ on } \partial\Omega\big\},
\]
and the paper \cite{vu} studied the local regularity of eigenfunctions. It should be noted that the lost of regularity
also destroys the proofs for \eqref{abm}, as the finiteness of the summands in the crucially important identity \eqref{qf-am}
implicitly means the inclusion $\dom A^\Omega_m\subset H^1(\Omega,\CC^N)$. The paper \cite{BBZ} contains an alternative proof of \eqref{abm} for $n=3$ and smooth $\Omega$ with the help of the resolvents, but its main constructions are based on the microlocal analysis, and no obvious extension to non-smooth domains is expected. Overall, to our knowledge, there are no proofs of the asymptotics \eqref{abm} for any non-smooth $\Omega$.

The main goal of the present paper is to show that the above results for smooth $\Omega$ can be extended to a large class of non-smooth but \emph{convex} $\Omega$. Remark that the convexity is one of the standard assumptions guaranteeing maximal regularity for Laplacians with various boundary conditions, see e.g. \cite[Chap.~3]{gris} for most classical results or \cite{lp1,lp2} for recent developments. Before passing to rigorous formulations we recall that any bounded convex domain has Lipschitz boundary \cite[Corollary 1.2.2.3]{gris}, hence, the usual machineries in Sobolev spaces (embeddings, traces, extension operators) are available.
Our central result is as follows:
\begin{theorem}[Self-adjointness]\label{thm-selfadj}
	Let $\Omega\subset\RR^n$ be a bounded convex domain with outer unit normal $\nu$ \textup{(}which is defined a.e. on $\partial\Omega$\textup{)}, and let $m\in\RR$. Then the operator
	\begin{equation}
		\label{doma}
		A_m^\Omega:\ f\mapsto D_m f,\quad \dom A^\Omega_m=\big\{ f\in H^1(\Omega,\CC^N):\ f=-\rmi \beta \alpha\cdot \nu f \text{ on } \partial\Omega\big\},
	\end{equation}
	is self-adjoint in $L^2(\Omega,\CC^N)$ with compact resolvent.
\end{theorem}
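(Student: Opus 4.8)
The natural strategy is to verify that $A_m^\Omega$ is symmetric and then that $\ran(A_m^\Omega\pm \rmi)=L^2(\Omega,\CC^N)$, or equivalently to show that $A_m^\Omega$ is closed with $(A_m^\Omega)^*$ symmetric. I would actually aim at establishing two things: (i) the quadratic-form identity \eqref{qf-am} (in the form without the curvature term, i.e.\ as an inequality) holds for all $f$ in the domain \eqref{doma}; and (ii) the domain \eqref{doma} exhausts the maximal domain on which both $f$ and $D_m f$ live in $L^2$ together with the MIT bag trace condition. Symmetry is elementary: for $f,g\in\dom A_m^\Omega$, integration by parts on a convex (hence Lipschitz) domain gives $\langle D_m f,g\rangle-\langle f,D_m g\rangle=-\rmi\int_{\partial\Omega}\langle (\alpha\cdot\nu)f,g\rangle\,\dd\cH^{n-1}$, and the boundary term vanishes pointwise a.e.\ because the MIT bag condition $f=-\rmi\beta(\alpha\cdot\nu)f$ forces $\langle(\alpha\cdot\nu)f,g\rangle$ to be purely imaginary while $\langle(\alpha\cdot\nu)g,f\rangle$ is its conjugate; here one uses $(\alpha\cdot\nu)^2=I_N$ a.e.\ and the anticommutation relations. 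Compactness of the resolvent then follows from the compact embedding $H^1(\Omega)\hookrightarrow L^2(\Omega)$ once self-adjointness with $H^1$-domain is known.

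The heart of the matter is surjectivity of $A_m^\Omega\pm\rmi$, i.e.\ proving that any $f$ in the \emph{maximal} operator (those $f\in L^2$ with $D_m f\in L^2$ and the MIT trace making sense and vanishing in the appropriate weak/distributional sense) in fact belongs to $H^1(\Omega,\CC^N)$. The convexity assumption should enter exactly here, and the cleanest route is approximation: approximate $\Omega$ from inside by an increasing sequence of \emph{smooth} convex domains $\Omega_k\uparrow\Omega$ (e.g.\ sublevel sets of a mollified gauge function, or $\Omega_k=\{x:\dist(x,\Omega^\cc)>1/k\}$ smoothed), apply the known smooth-boundary result and in particular the identity \eqref{qf-am} on each $\Omega_k$, and note that the curvature term $\int_{\partial\Omega_k}(m+H/2)|f|^2$ has the \emph{right sign up to the zeroth-order part}: on a convex hypersurface $H\ge 0$, so $\int_{\partial\Omega_k}\tfrac{H}{2}|f|^2\ge 0$, and the remaining $\int_{\partial\Omega_k} m|f|^2$ is controlled by a trace/interpolation inequality $\|f\|_{L^2(\partial\Omega_k)}^2\le \eps\|\nabla f\|_{L^2(\Omega_k)}^2+C_\eps\|f\|_{L^2(\Omega_k)}^2$ with constants uniform in $k$ (here convexity of $\Omega$, hence of $\Omega_k$, gives uniform control on the trace constant). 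This yields a uniform bound $\|\nabla f\|_{L^2(\Omega_k)}\le C(\|D_m f\|_{L^2(\Omega)}+\|f\|_{L^2(\Omega)})$, and letting $k\to\infty$ gives $f\in H^1(\Omega,\CC^N)$ with $\|\nabla f\|_{L^2(\Omega)}\le C(\|D_m f\|+\|f\|)$; a Fatou/weak-compactness argument promotes this to $f\in H^1$.

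The delicate points I expect to be the main obstacles are: (a) making precise the \emph{maximal realization} and checking that $A_m^\Omega$ is exactly its restriction plus that $(A_m^\Omega)^*\subset$ (maximal realization with the same trace condition), so that the $H^1$-regularity of maximal elements indeed closes the self-adjointness argument — this requires a careful trace theory for $f$ with $D_m f\in L^2$ on a Lipschitz boundary (the trace lives a priori only in $H^{-1/2}$), for which one can invoke the $H^{1/2}$-framework results of \cite{BB1,BHSS,PV} as the starting maximal operator; and (b) the uniformity of all constants (trace inequality, Sobolev extension) along the approximating family $\Omega_k$, together with the fact that the MIT bag condition on $\partial\Omega_k$ for the restriction $f|_{\Omega_k}$ must be \emph{arranged or circumvented} — since $f$ need not satisfy the MIT condition on the artificial boundary $\partial\Omega_k$, one should instead multiply $f$ by a cutoff or, better, run the integration-by-parts identity on $\Omega_k$ for $f$ directly without assuming a boundary condition there, picking up a boundary term on $\partial\Omega_k$ that is then estimated via the sign of $H$ and the trace inequality as above. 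Handling this last point correctly — extracting a coercive estimate on $\Omega_k$ for a function that only satisfies the boundary condition on the limiting boundary — is where the convexity is genuinely used and is the crux of the proof.
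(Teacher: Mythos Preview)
Your plan has a genuine gap precisely at the point you flag as ``the crux'': controlling the boundary term on the artificial interior boundary $\partial\Omega_k$ when $f$ does \emph{not} satisfy the MIT condition there. The identity \eqref{qf-am} is not a general integration-by-parts formula; it is derived \emph{using} the boundary condition. For an arbitrary $f\in H^1(\Omega_k,\CC^N)$ the computation of $\|D_0 f\|^2_{L^2(\Omega_k)}-\|\nabla f\|^2_{L^2(\Omega_k)}$ produces off-diagonal terms $\sum_{j\ne k}\int_{\Omega_k}\langle\alpha_j\partial_j f,\alpha_k\partial_k f\rangle$, and integrating these by parts leaves a boundary integral over $\partial\Omega_k$ that involves \emph{first derivatives} of $f$ (essentially $\langle f,(\alpha\cdot\nu)D_0 f\rangle$ or, equivalently, a tangential Dirac operator acting on $f$), not merely $|f|^2$. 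Only after imposing $f=-\rmi\beta(\alpha\cdot\nu)f$ does this collapse to the curvature term $\tfrac{H}{2}|f|^2$. So ``the sign of $H$ and the trace inequality'' cannot bound the raw boundary term, and you obtain no uniform control of $\|\nabla f\|_{L^2(\Omega_k)}$ this way. The circularity is that you need the MIT condition on $\partial\Omega_k$ to get the coercive estimate, but $f$ only satisfies it on $\partial\Omega$.

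The paper sidesteps this entirely by reversing both the direction of approximation and the logic. It approximates $\Omega$ from \emph{outside} by smooth convex $\Omega_p\supset\Omega$ (Grisvard's construction), and instead of trying to show $H^1$-regularity of the adjoint, it proves $\ran(A+\rmi)=L^2$ directly: given $g\in L^2(\Omega,\CC^N)$, extend by zero to $\Omega_p$ and solve $(\Tilde A_p+\rmi)\Tilde f_p=\Tilde g_p$ on $\Omega_p$, where the smooth-case self-adjointness is available and $\Tilde f_p$ \emph{does} satisfy the MIT condition on $\partial\Omega_p$. Now \eqref{qf-am} applies on $\Omega_p$, convexity gives $H_p\ge 0$, and one reads off a uniform $H^1(\Omega_p)$ bound. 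Restricting to $\Omega$ and passing to a weak limit $f\in H^1(\Omega)$ yields $(D_0+\rmi)f=g$; the remaining work (and the only delicate part) is to check that the weak limit inherits the MIT condition on the non-smooth boundary $\partial\Omega$, which is handled by a chart-by-chart argument using the convergence of the local graph functions and their gradients. This route never asks for a boundary identity on a hypersurface where the condition is not satisfied.
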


For the subsequent discussion we impose a better regularity of the boundary.

\begin{definition}\label{def-pcs}
	Let $U\subset\RR^n$ be a bounded domain with Lipschitz boundary. A point $s\in \partial U$ is called \emph{regular} if $U$ coincides with a $C^\infty$-smooth domain in an open neighborhood of $s$, otherwise it is called \emph{singular}. One denotes
	\begin{align*}
	\partial_\infty U&:=\{s\in \partial U:\ \text{$s$ is regular}\} &&\text{ (the \emph{regular boundary} of $U$),}\\
	\partial_0 U&:=\{s\in \partial U:\ \text{$s$ is singular}\} &&\text{ (the \emph{singular boundary} of $U$).}
	\end{align*}
	The domain $U$ will be called \emph{piecewise smooth} if its singular boundary is contained in the union of finitely many compact $(n-2)$-dimensional submanifolds of $\RR^n$.	
\end{definition}	

We show:

\begin{theorem}[Quadratic form]\label{thm2}
	Let $\Omega\subset\RR^n$ be a bounded convex domain with piecewise smooth boundary and outer unit normal $\nu$ \textup{(}which is defined at least on the regular boundary\textup{)}, then
	for any $f\in \dom A^\Omega_m$ one has
	\[
	\|A^\Omega_m f\|^2_{L^2(\Omega,\CC^N)}=\int_\Omega \Big( |\nabla f|^2 +m^2|f|^2\big)\,\dd x+\int_{\partial\Omega}\Big(m+\dfrac{H}{2}\Big) |f|^2\dd \cH^{n-1},
	\]
	where $H: x\mapsto \mathop{\mathrm{tr}}\dd|_x\nu\ge 0$ is the mean curvature defined on the regular boundary of $\Omega$.	
\end{theorem}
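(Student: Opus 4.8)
The plan is to bootstrap from the identity \eqref{qf-am}, already known on $C^\infty$ domains, by a localisation and approximation argument; the two features of the hypotheses that are genuinely used are that the singular boundary $\partial_0\Omega$ has codimension at least $2$ and that the mean curvature is nonnegative. (Alternatively one could rerun, on the auxiliary smooth domains below, the first-order integration by parts resting on the identity $D_m^2=-\Delta+m^2$ that lies behind \eqref{qf-am}, but quoting \eqref{qf-am} is cleaner.) First I would establish the formula for those $g\in\dom A^\Omega_m$ whose support meets $\partial\Omega$ only in a compact subset of the regular boundary $\partial_\infty\Omega$. Fix an open neighbourhood $V$ of $\partial_0\Omega$ with $\overline V\cap\supp g=\emptyset$. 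Since $\partial\Omega$ is $C^\infty$ away from $\partial_0\Omega$, a routine surgery rounding $\Omega$ off inside $V$ yields a bounded $C^\infty$ domain $\widetilde\Omega$ which coincides with $\Omega$, together with its outer normal and mean curvature $\widetilde H=H$, outside $V$. As a real scalar cut-off preserves the MIT bag relation and $g$ vanishes on $V$, the zero extension of $g$ lies in $\dom A^{\widetilde\Omega}_m$; applying \eqref{qf-am} on $\widetilde\Omega$ and noting that every integrand built from $g$ vanishes on $V$ (which contains the symmetric differences of $\Omega$ and $\widetilde\Omega$ and of their boundaries) gives the asserted identity for $g$.

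For a general $f\in\dom A^\Omega_m$ I would take scalar cut-offs $\theta_\eps\in C^\infty(\RR^n)$ with $0\le\theta_\eps\le1$, vanishing on an $\eps$-neighbourhood of $\partial_0\Omega$ and equal to $1$ off its $2\eps$-neighbourhood, so that $\theta_\eps f\in\dom A^\Omega_m$ and the previous step applies to $g=\theta_\eps f$. Because $\partial_0\Omega$ is contained in finitely many compact $(n-2)$-dimensional submanifolds it has zero $H^1$-capacity, and with a suitable choice of $\theta_\eps$ --- logarithmic cut-offs combined with a preliminary truncation of $f$, which is needed because members of $\dom A^\Omega_m$ are in general neither bounded nor of class $H^2$ near $\partial_0\Omega$ --- one gets $\theta_\eps f\to f$ in $H^1(\Omega)$. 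Hence $\|D_m(\theta_\eps f)\|^2_{L^2(\Omega)}\to\|A^\Omega_m f\|^2$ and $\int_\Omega\big(|\nabla(\theta_\eps f)|^2+m^2|\theta_\eps f|^2\big)\to\int_\Omega\big(|\nabla f|^2+m^2|f|^2\big)$, while $\theta_\eps f|_{\partial\Omega}=\theta_\eps\,(f|_{\partial\Omega})$ with $|\theta_\eps f|^2\le|f|^2\in L^1(\partial\Omega)$ and $\theta_\eps\to1$ $\cH^{n-1}$-a.e.\ on $\partial\Omega$ gives $m\int_{\partial\Omega}|\theta_\eps f|^2\to m\int_{\partial\Omega}|f|^2$ by dominated convergence. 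Subtracting these from the identity for $\theta_\eps f$ shows that $\tfrac12\int_{\partial\Omega}H|\theta_\eps f|^2$ converges to a finite limit $L$. At this point convexity enters through $H\ge0$: Fatou's lemma gives $\tfrac12\int_{\partial\Omega}H|f|^2\le L<\infty$, hence $H|f|^2\in L^1(\partial\Omega)$, and then dominated convergence yields $\tfrac12\int_{\partial\Omega}H|\theta_\eps f|^2\to\tfrac12\int_{\partial\Omega}H|f|^2=L$. Collecting the four limits proves the identity for $f$.

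The delicate part is the last step. One has to approximate $f$ in $H^1(\Omega)$ by scalar multiples vanishing near $\partial_0\Omega$ --- possible only thanks to the codimension $\ge2$ of the singular set, and still requiring some care because of the low boundary regularity of $\dom A^\Omega_m$ --- and then to pass to the limit in $\int_{\partial\Omega}H|\theta_\eps f|^2$ although $H$ may well be unbounded near $\partial_0\Omega$. It is precisely here that the sign $H\ge0$ supplied by convexity is indispensable; it is also what makes the right-hand side of the theorem meaningful in the first place, a priori only as an element of $(-\infty,+\infty]$.
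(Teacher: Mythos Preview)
Your proposal is correct and follows essentially the same strategy as the paper: first a localisation via smooth surgery near $\partial_0\Omega$ to reduce to the known $C^\infty$ case, then an approximation of general $f$ by scalar cut-offs vanishing near the singular boundary, with Fatou's lemma followed by dominated convergence handling the possibly unbounded $H$-term. The only technical difference is that the paper uses power-type cut-offs borrowed from Costabel--Dauge--Nicaise (Lemma~\ref{lem-cdn}, via Lemma~\ref{lem-subman} and Corollary~\ref{cor-rhoj}) which give $\rho_j f\to f$ in $H^1$ for every $f\in H^1$ directly, so the preliminary truncation you anticipate is not needed.
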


By imposing an additional geometric condition we are finally able to extend \eqref{abm} to a large class of non-smooth domains:

\begin{theorem}[Infinite mass limit]\label{thm-limit}
	Let $\Omega\subset\RR^n$ be a bounded convex domain with piecewise smooth boundary, and assume that the mean curvature of the regular boundary is bounded. Then for any $m\in\RR$ and $j\in\NN$ one has
	$E_j\big((A^\Omega_m)^2\big)=\lim\limits_{M\to+\infty} E_j\big((B^\Omega_{m,M})^2\big)$,
	where the operator $B^\Omega_{m,M}$ acting in $L^2(\RR^n,\CC^N)$ is defined by \eqref{bmm}.
\end{theorem}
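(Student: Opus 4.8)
The plan is to prove the two-sided inequality $E_j\big((A^\Omega_m)^2\big)\le \liminf_{M\to+\infty}E_j\big((B^\Omega_{m,M})^2\big)$ and $\limsup_{M\to+\infty}E_j\big((B^\Omega_{m,M})^2\big)\le E_j\big((A^\Omega_m)^2\big)$ via the min-max principle, exactly as in the smooth case, with Theorem~\ref{thm-selfadj} and Theorem~\ref{thm2} replacing the facts that were previously only available for smooth $\Omega$. First I would record the quadratic form of $(B^\Omega_{m,M})^2$: since $B^\Omega_{m,M}$ has domain $H^1(\RR^n,\CC^N)$, for $g$ in that space one has $\|B^\Omega_{m,M}g\|^2_{L^2(\RR^n)}=\int_{\RR^n}|\nabla g|^2\,\dd x+\int_\Omega m^2|g|^2\,\dd x+\int_{\Omega^\cc}M^2|g|^2\,\dd x$ (the cross terms involving $\beta$ integrate to zero because $D_0$ is symmetric on $H^1(\RR^n)$ and the mass potential is scalar-times-$\beta$ with $\beta$ Hermitian; more precisely one uses $\|D_0 g\|^2+\|V g\|^2+2\,\mathrm{Re}\langle D_0 g,Vg\rangle$ and checks the cross term vanishes after integrating by parts since $V=(m\one_\Omega+M\one_{\Omega^\cc})\beta$ is piecewise constant and the jump term cancels — actually it does not cancel in general, so more carefully one keeps $(B^\Omega_{m,M})^2$ as a genuinely coupled form and bounds it from below by the decoupled one; see below). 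The compact-resolvent assertions needed for the min-max comparison follow from Theorem~\ref{thm-selfadj} for $A^\Omega_m$ and from the diamagnetic/Persson-type confinement as $M\to\infty$ for $B^\Omega_{m,M}$.

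For the \textbf{upper bound} $\limsup_M E_j\big((B^\Omega_{m,M})^2\big)\le E_j\big((A^\Omega_m)^2\big)$, I would take the first $j$ eigenfunctions $f_1,\dots,f_j$ of $A^\Omega_m$, which lie in $\dom A^\Omega_m\subset H^1(\Omega,\CC^N)$, and build trial functions in $H^1(\RR^n,\CC^N)$ by gluing $f_i$ inside $\Omega$ to a rapidly decaying extension across $\partial\Omega$ of "width" $\sim 1/M$. Concretely, using tubular-type coordinates near the regular part of $\partial\Omega$ one extends $f_i$ by $f_i(s-t\nu(s))\,e^{-Mt}$ for $t\in(0,\delta)$ (suitably cut off), and one computes that the extra energy contributed by the exterior region is $\int_{\partial\Omega}(M+\tfrac{H}{2})|f_i|^2\,\dd\cH^{n-1}+o(1)\cdot\|f_i\|^2_{H^1}$ as $M\to\infty$, after using the coarea formula and the fact that $\int_0^\infty M^2 e^{-2Mt}\,\dd t=M/2$, $\int_0^\infty |\partial_t(e^{-Mt})|^2\,\dd t=M/2$, so the leading $M$-terms combine; the curvature term $H/2$ appears from expanding the Jacobian $\det(I-t\,\dd\nu)=1-tH+O(t^2)$. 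Here the piecewise-smoothness hypothesis is what lets the construction proceed: the singular boundary has $\cH^{n-1}$-measure zero and is covered by finitely many $(n-2)$-submanifolds, so a partition of unity subordinate to (regular chart neighborhoods) $\cup$ (a shrinking neighborhood of $\partial_0\Omega$) allows one to ignore the singular set at the cost of $o(1)$, and boundedness of $H$ guarantees the curvature term is uniformly controlled. Matching this with the expression in Theorem~\ref{thm2}, the quadratic form of $(B^\Omega_{m,M})^2$ evaluated on the glued $f_i$'s equals $\|A^\Omega_m f_i\|^2+o(1)$, so min-max gives the upper bound.

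For the \textbf{lower bound} $E_j\big((A^\Omega_m)^2\big)\le\liminf_M E_j\big((B^\Omega_{m,M})^2\big)$, I would argue that as $M\to\infty$ the low-lying eigenfunctions $g^{(M)}$ of $(B^\Omega_{m,M})^2$ with uniformly bounded energy satisfy $\int_{\Omega^\cc}M^2|g^{(M)}|^2\le C$, hence $\|g^{(M)}\|_{L^2(\Omega^\cc)}\to 0$ and, interpolating with the bounded $H^1$-norm, $\|g^{(M)}\|_{H^{1/2}(\Omega^\cc)}\to0$, so the traces on $\partial\Omega$ from outside tend to $0$; combined with $\|\nabla g^{(M)}\|_{L^2(\RR^n)}\le C$ this forces any weak limit (after restriction to $\Omega$) to lie in $H^1(\Omega,\CC^N)$ and — by the relation between the interior and exterior Dirac traces across $\partial\Omega$ for $H^1(\RR^n)$ functions, i.e. they agree — to satisfy the MIT bag boundary condition in the limit, hence to be in $\dom A^\Omega_m$. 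A cleaner route, which I would actually follow, is to compare quadratic forms directly: restricting a test function $g\in H^1(\RR^n,\CC^N)$ to $\Omega$ does not land in $\dom A^\Omega_m$, so instead I would use that $(B^\Omega_{m,M})^2\ge -\Delta\otimes I_N + M^2\one_{\Omega^\cc} + (\text{bounded})$ on $H^1(\RR^n)$ together with a Dirichlet-bracketing/decoupling argument: the form of $(B^\Omega_{m,M})^2$ dominates the decoupled form on $\Omega$ with a Robin-type boundary term whose boundary coefficient tends to $m+H/2$ from below as $M\to\infty$ — this is precisely the content that makes Theorem~\ref{thm2}'s identity the $M=\infty$ form — and the nonnegativity $H\ge0$ (convexity) is what keeps everything lower semibounded uniformly. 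The main obstacle, and the place I expect to spend the most care, is the lower bound near the singular boundary $\partial_0\Omega$: the tubular coordinate system and the curvature identity degenerate there, and one must show that the eigenfunctions of $(B^\Omega_{m,M})^2$ do not concentrate on $\partial_0\Omega$ as $M\to\infty$ — this should follow from an IMS-type localization combined with the fact that $\partial_0\Omega$ has zero $\cH^{n-1}$-measure and finite $(n-2)$-dimensional content, so a logarithmic or capacity cutoff costs only $o(1)$ in energy, but making this uniform in $M$ is the delicate point; the boundedness of $H$ on the regular boundary is exactly the extra hypothesis that rules out a competing concentration mechanism away from $\partial_0\Omega$.
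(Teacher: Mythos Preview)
Your overall min-max strategy matches the paper, and your upper bound sketch is close in spirit; the paper's simplification there is to invoke the essential self-adjointness on $\dom_\infty A^\Omega_m$ (functions that are smooth on $\overline\Omega$ and \emph{vanish} near $\partial_0\Omega$), so that the trial subspace can be taken to have support away from the singular boundary. Then the extension $\Tilde f(x)=f(S(x))e^{-Md(x)}$ lives over a compact patch of the \emph{regular} boundary, tubular coordinates are available without any cut-off gymnastics, and a direct computation gives $\|B_M\Tilde f\|^2\le \|Af\|^2+O(1/M)$. Your partition-of-unity-near-$\partial_0\Omega$ alternative could presumably be made to work, but the density lemma makes it unnecessary. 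Also, you should not try to compute $\|B_M g\|^2$ by dropping cross terms: the paper first proves the exact identity
\[
\|B^\Omega_{m,M}f\|^2=\int_\Omega(|\nabla f|^2+m^2|f|^2)+\int_{\Omega^\cc}(|\nabla f|^2+M^2|f|^2)+(M-m)\int_{\partial\Omega}\big(|\cP_-f|^2-|\cP_+f|^2\big)\,\dd\cH^{n-1}
\]
for piecewise smooth convex $\Omega$ (by rounding corners and density), and both bounds are run off this formula.

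The genuine gap is in your lower bound, and you have misdiagnosed where the difficulty sits. In the paper's argument there is \emph{no} issue with concentration near $\partial_0\Omega$, no IMS localization, and no capacity cut-off. Instead, the single hard step is a uniform-in-$\gamma$ lower bound for the \emph{exterior} Robin-type form
\[
R_\gamma(f)=\int_{\Omega^\cc}\big(|\nabla f|^2+\gamma^2|f|^2\big)\,\dd x-\int_{\partial\Omega}\Big(\gamma+\tfrac{H}{2}\Big)|f|^2\,\dd\cH^{n-1}\ \ge\ -c\int_{\Omega^\cc}|f|^2\,\dd x,
\]
with $c$ depending only on $\|H\|_\infty$. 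This is proved by passing to exterior tubular coordinates $\Phi:\partial_\infty\Omega\times(0,\infty)\to\Omega^\cc$ (which, by convexity, is a global diffeomorphism onto its image; the complement of the image contributes non-negatively and is simply dropped), substituting $h=\sqrt{J}\,g$, integrating by parts, and bounding the resulting effective potential $\partial_t(\partial_tJ/2J)+(\partial_tJ/2J)^2$ below by $-\tfrac{n-1}{2}\|H\|_\infty^2$. This is exactly where the boundedness of $H$ enters, not as an anti-concentration device near the singular set. Once $R_\gamma\ge -c$ is in hand, one rewrites $\|B_Mf\|^2$ via the identity above, peels off $R_{M-1/\sqrt{M}}$ on $\Omega^\cc$, and is left with a monotone family of forms $k_M$ on $H^1(\Omega,\CC^N)$ whose limit domain is precisely $\dom A^\Omega_m$ and whose limit form is $\|Af\|^2$; the monotone convergence principle then gives $E_j(K_M)\to E_j(A^2)$. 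Your sketch mentions ``decoupling with a Robin boundary term'' but never isolates or proves the uniform exterior estimate, which is the whole point; without it the decoupling gives an exterior operator whose bottom could a priori drift to $-\infty$ with $M$, and the comparison collapses.
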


The assumptions of Theorem~\ref{thm-limit} are satisfied, in particular, if $\Omega$ is a bounded convex polyhedron (i.e. a bounded domain obtained as an intersection of halfspaces), or a convex curvilinear polyhedron (i.e. obtained as a diffeomorphic image of a bounded convex polyhedron), or is a convex piecewise smooth domain obtained as the intersection of finitely many $C^\infty$-smooth bounded domains. On the other hand, the assumption on the boundedness of the mean curvature
excludes e.g. the three-dimensional domains whose boundary has conical singularities of the form $x_3=\sqrt{x_1^2+x_2^2}$. We believe that the assumption on the mean curvature is purely technical, see Remark~\ref{rmk-end} at the end of the paper.

Our proof of Theorem~\ref{thm-selfadj} is presented in Section~\ref{proof-thm1}, and it is based on adaptations of some constructions in Grisvard's book \cite[Sec.~3.2]{gris} for second-order operators. Remark~\ref{rmk10} indicates possible extensions. In Section~\ref{approx}
we derive some approximation results for Sobolev spaces. This allows one to show the essential self-adjoitness of $A^\Omega_m$ on smooth functions vanishing near the singular boundary (Lemma~\ref{thm-ess}), which is an essential ingredient for the subsequent analysis. In particular, it is used in Section~\ref{proof-thm2} to prove Theorem~\ref{thm2}
by transferring the results known for the smooth case. Theorem~\ref{thm-limit} is shown in Section~\ref{proof-thm3},
and is mainly in the spirit of the analysis of~\cite{mobp}, while a more involved choice of test functions is used, and the constructions with tubular coordinates near $\partial\Omega$ (which are now unavailable due to non-smoothness) are replaced
by some estimates for Robin Laplacians in exterior domains. The short Appendix~\ref{appa} collects a necessary information on the Dirac matrices.

\section{Proof of Theorem \ref{thm-selfadj} (self-adjointness)}\label{proof-thm1}

As a starting point let us summarize rigorously some results on  smooth domains, see \cite[Lem.~2.1 and Prop.~A.2]{mobp}.
\begin{lemma}\label{lem2}
	Let $\Omega\subset\RR^n$ be a bounded domain with $C^\infty$-smooth boundary and outer unit normal $\nu$, and let $m\in\RR$. Then the operator $A^\Omega_m$ defined by \eqref{doma}
	is self-adjoint in $L^2(\Omega,\CC^N)$, and for any $f\in \dom A^\Omega_m$ one has the identity \eqref{qf-am}.
\end{lemma}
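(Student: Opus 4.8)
The plan is to establish, separately, the symmetry of $A^\Omega_m$, the inclusion $\dom(A^\Omega_m)^*\subseteq\dom A^\Omega_m$ (together with the compactness of the resolvent), and finally the identity \eqref{qf-am}. For the symmetry, the algebraic fact to record is that on $\partial\Omega$ the matrix $\rmi\beta\,(\alpha\cdot\nu)$ is Hermitian and squares to $I_N$ (use $\alpha_k\beta+\beta\alpha_k=0$, $\beta^2=I_N$ and $(\alpha\cdot\nu)^2=|\nu|^2I_N=I_N$), so that $P^\pm_\nu:=\tfrac12\bigl(I_N\mp\rmi\beta\,(\alpha\cdot\nu)\bigr)$ are complementary orthogonal projections and the MIT bag condition means precisely $P^+_\nu f=f$ on $\partial\Omega$; moreover $\alpha\cdot\nu$ anticommutes with $\rmi\beta\,(\alpha\cdot\nu)$, hence interchanges $\ran P^+_\nu$ and $\ran P^-_\nu$, so that $(\,(\alpha\cdot\nu)u,v\,)_{\CC^N}=0$ whenever $u,v\in\ran P^+_\nu$. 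Green's formula for the expression \eqref{eqedm} on the Lipschitz domain $\Omega$ reads, for $f,g\in H^1(\Omega,\CC^N)$,
\[
\langle D_m f,g\rangle_{L^2(\Omega)}-\langle f,D_m g\rangle_{L^2(\Omega)}=\rmi\int_{\partial\Omega}\bigl(\,(\alpha\cdot\nu)f,\,g\,\bigr)_{\CC^N}\,\dd\cH^{n-1},
\]
whose right-hand side vanishes for $f,g\in\dom A^\Omega_m$ by the above; thus $A^\Omega_m$ is symmetric.

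For the self-adjointness, take $u\in\dom(A^\Omega_m)^*$ with $(A^\Omega_m)^*u=:g$; testing the defining relation $\langle A^\Omega_m f,u\rangle_{L^2(\Omega)}=\langle f,g\rangle_{L^2(\Omega)}$ against $f\in C_c^\infty(\Omega,\CC^N)\subset\dom A^\Omega_m$ gives $D_m u=g$ distributionally, so $u$ lies in the maximal domain. The crucial step is the boundary regularity $u\in H^1(\Omega,\CC^N)$: since $\partial\Omega$ is $C^\infty$ and the MIT bag condition is a local boundary condition satisfying the Shapiro--Lopatinski covering condition for the first-order elliptic operator $D_m$ (equivalently, the associated squared problem is the Laplacian with a Robin-type boundary condition), the standard elliptic theory up to the boundary yields $u\in H^1$ (in fact $u\in H^2$ for the square). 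Once $u\in H^1$ the Green's formula above is licit, the identity $\langle A^\Omega_m f,u\rangle_{L^2(\Omega)}=\langle f,D_m u\rangle_{L^2(\Omega)}$ for all $f\in\dom A^\Omega_m$ forces $\int_{\partial\Omega}(\,(\alpha\cdot\nu)f,u\,)_{\CC^N}\,\dd\cH^{n-1}=0$, and since the traces of elements of $\dom A^\Omega_m$ are dense in the $L^2$-space of sections of $\ran P^+_\nu$ over $\partial\Omega$ while $(\alpha\cdot\nu)$ interchanges $\ran P^+_\nu$ and $\ran P^-_\nu$, this forces $P^+_\nu u=u$ a.e.\ on $\partial\Omega$, i.e.\ $u\in\dom A^\Omega_m$. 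Hence $(A^\Omega_m)^*\subseteq A^\Omega_m$, which with symmetry gives self-adjointness; and $\dom A^\Omega_m\subset H^1(\Omega,\CC^N)$, compactly embedded in $L^2(\Omega,\CC^N)$ by Rellich's theorem ($\Omega$ bounded), yields the compactness of the resolvent.

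For the identity \eqref{qf-am} I would first note that the $C^\infty(\overline\Omega,\CC^N)$-functions obeying the MIT bag condition form a core: given $f\in\dom A^\Omega_m$ one extends $\nu$ to a $C^\infty$ field $N$ near $\partial\Omega$, smooths out $P^+_N f$ and $f-P^+_N f\in H^1_0(\Omega,\CC^N)$ separately, and uses that the graph norm is controlled by the $H^1$-norm on $\dom A^\Omega_m$. So it suffices to prove \eqref{qf-am} for such smooth $f$. Expand $\|A^\Omega_m f\|^2_{L^2(\Omega)}=\|D_0 f\|^2_{L^2(\Omega)}+2m\operatorname{Re}\langle D_0 f,\beta f\rangle_{L^2(\Omega)}+m^2\|f\|^2_{L^2(\Omega)}$ and use $D_0^2=-\Delta\,I_N$ together with $D_0(\beta f)=-\beta D_0 f$ (both from $\alpha_j\alpha_k+\alpha_k\alpha_j=2\delta_{jk}I_N$ and $\alpha_k\beta+\beta\alpha_k=0$). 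Green's formula for $D_0$ turns the mass cross-term into $m\int_{\partial\Omega}(\,(\alpha\cdot\nu)f,\beta f\,)_{\CC^N}\,\dd\cH^{n-1}$ up to a sign, and on $\partial\Omega$ the MIT bag condition gives $(\,(\alpha\cdot\nu)f,\beta f\,)_{\CC^N}=\mp\rmi|f|^2$, which produces the term $m\int_{\partial\Omega}|f|^2\,\dd\cH^{n-1}$. For $\|D_0 f\|^2$, Green's first identity for $-\Delta=D_0^2$ combined with Green's formula for $D_0$ reduces it to $\|\nabla f\|^2_{L^2(\Omega)}$ plus a boundary integral involving only the tangential derivatives of $f$ on $\partial\Omega$; integrating that integral by parts along $\partial\Omega$ and invoking the MIT bag condition together with the Weingarten identity $\operatorname{div}_{\partial\Omega}\nu=H$ converts it into $\tfrac12\int_{\partial\Omega}H|f|^2\,\dd\cH^{n-1}$. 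Summing the three contributions gives \eqref{qf-am}.

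The decisive difficulty is the self-adjointness step---concretely, the $H^1$-regularity of $\dom(A^\Omega_m)^*$---which is exactly where the $C^\infty$-smoothness of $\partial\Omega$ is used, through the elliptic boundary-value theory, and which the remainder of the paper must replace by convexity-based arguments for non-smooth $\Omega$. A lesser, purely computational nuisance is the tangential integration by parts that isolates the coefficient $\tfrac12 H$ in \eqref{qf-am}, since it relies on the shape operator of $\partial\Omega$ and hence again on smoothness.
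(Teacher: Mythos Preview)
The paper does not actually prove Lemma~\ref{lem2}: it is stated as a summary of known results, with an explicit citation to \cite[Lem.~2.1 and Prop.~A.2]{mobp} (and, implicitly, to the general first-order boundary-value framework of \cite{baer,baer2}). So there is no in-paper argument to compare against; your sketch should be read as a reconstruction of what those references do.

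On that score your outline is sound and follows the standard route taken there: symmetry from the algebra of the boundary projectors $P^\pm_\nu$ together with Green's formula; self-adjointness from elliptic boundary regularity (the Shapiro--Lopatinski condition holds precisely because $\alpha\cdot\nu$ interchanges $\ran P^+_\nu$ and $\ran P^-_\nu$, so the boundary projection is complementary to the Calder\'on projector); and the quadratic-form identity by squaring $D_m$, integrating by parts, and extracting the $\tfrac12 H$ contribution from the tangential derivative of the boundary condition via the Weingarten map. The one place where your sketch is genuinely thin is the density step needed to conclude $P^+_\nu u=u$ for $u\in\dom(A^\Omega_m)^*$: you assert that the traces of $\dom A^\Omega_m$ are dense in $L^2(\partial\Omega,\ran P^+_\nu)$, which is true but requires either an explicit extension operator $H^{1/2}(\partial\Omega,\ran P^+_\nu)\to\dom A^\Omega_m$ (this is the content of \cite{ALTR}) or a direct appeal to the abstract machinery of \cite{baer,baer2}. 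Your parenthetical remark that the squared problem is ``the Laplacian with a Robin-type boundary condition'' is a slight oversimplification---the boundary conditions for $(A^\Omega_m)^2$ couple the MIT bag condition with a normal-derivative condition---but this does not affect the argument, which only needs ellipticity of the first-order problem.
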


As the parameter $m$ only results in adding a bounded symmetric perturbation, it is sufficient to show that
the operator
\[
A:=A^\Omega_0
\]
is self-adjoint and has compact resolvent. The integration by parts show that $A$ is symmetric, and its self-adjointness is equivalent to
the equalities $\ran(A\pm\rmi)=L^2(\Omega,\CC^N)$. We will show $\ran(A+\rmi )=L^2(\Omega,\CC^N)$, as the other condition
is proved completely analogously by adjusting the signs.

Our constructions will be based on the following results of convex analysis, see e.g. \cite[Sec.~3.4]{schneider} and \cite[Lemma 3.2.3.2]{gris}:

\begin{lemma}\label{lem1}
	Let $\Omega\subset\RR^n$ be a bounded convex domain. There exist bounded convex $C^\infty$-smooth domains $\Omega_p\subset\RR^n$, $p\in\NN$, such that $\Omega\subset \Omega_p$ for all $p$ and $d_H(\partial \Omega_p,\partial \Omega)\xrightarrow{p\to\infty}0$, where $d_H$ stands for the Hausdorff distance. In addition, one can find open subsets $V_k\subset\RR^n$, $k\in\{1,\dots,K\}$,
	with the following properties\textup{:}
	\begin{itemize}
		\item for each $k$ there exist Cartesian coordinates $y^k_1,\dots,y^k_n$ in which $V_k$ is a hypercube,
		\[
		V_k=\Big\{(y^k_1,\dots,y^k_n): -a^k_j<y^k_j<a^k_j \text{ for all } j\in\{1,\dots,n\}\Big\},\quad a^k_j>0,
		\]
		\item for all $p\in\NN$ and each $k\in\{1,\dots,K\}$ there exist a Lipschitz function $h^k$ and a $C^\infty$-smooth function $h^k_p$ defined on
		\[
		V'_k=\Big\{(y^k_1,\dots,y^k_{n-1}): -a^k_j<y^k_j<a^k_j \text{ for all } j\in\{1,\dots,n-1\}\Big\}
		\]
		such that
		\begin{align*}
			& h^k(z^k)\le h^k_p(z^k),\ 			
			\big|h^k(z^k)\big|\le \frac{a^k_n}{2},\ \big|h^k_p(z^k)\big|\le \frac{a^k_n}{2} \text{ for all }z^k\in V'_k,\\
			\Omega\cap V_k&=\big\{ y^k=(z^k,y^k_n)\in V_k:\, z^k\in V'_k,\ y^k_n< h^k(z^k)\big\}, \\
			\Omega_p\cap V_k&=\big\{ y^k=(z^k,y^k_n)\in V_k:\, z^k\in V'_k,\ y^k_n< h^k_p(z^k)\big\},\\
			\partial \Omega\cap V_k&=\big\{ y^k=(z^k,y^k_n)\in V_k:\, z^k\in V'_k,\ y^k_n= h^k(z^k)\big\},\\
			\partial \Omega_p\cap V_k&=\big\{ y^k=(z^k,y^k_n)\in V_k:\, z^k\in V'_k,\ y^k_n= h^k_p(z^k)\big\},
		\end{align*}
		\item $\partial \Omega\subset\bigcup_{k=1}^K V_k$ and $\partial \Omega_p\subset\bigcup_{k=1}^K V_k$ for all $p\in\NN$,
		\item for each $k\in\{1,\dots,K\}$ one has $h^k_p\xrightarrow{p\to\infty} h^k$ uniformly in $V'_k$,
		\item there is an $L>0$ such that $	\big|\nabla h^k(z^k)\big|\le L$ and $\big|\nabla h^k_p(z^k)\big|\le L$
		for all a.e. $z^k\in V'_k$, all $k\in\{1,\dots,K\}$ and all $p\in\NN$,
		\item for each $k\in\{1,\dots,K\}$ one has $\nabla h^k_p(z^k)\xrightarrow{p\to\infty} \nabla h^k(z^k)$ for a.e. $z^k\in V'_k$.		
	\end{itemize}
\end{lemma}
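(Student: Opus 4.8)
The plan is to prove the two halves of the statement separately: first the existence of the smooth convex outer approximants $\Omega_p$, which is purely convex-geometric, and then the construction of a single $p$-independent atlas $\{V_k\}$ that simultaneously represents $\partial\Omega$ and all $\partial\Omega_p$ (for $p$ large) as graphs with uniformly controlled Lipschitz constants. For the first half I would use the classical smoothing of convex bodies via the support function: by \cite[Sec.~3.4]{schneider} (convolution of the support function over $SO(n)$ against a smooth probability kernel) there exist bounded convex $C^\infty$-smooth domains $\widetilde\Omega_p$ with $d_H(\widetilde\Omega_p,\Omega)\to 0$. Fix $r>0$ and, after a translation, assume $\overline{B(0,2r)}\subset\Omega$; then $B(0,r)\subset\widetilde\Omega_p$ for $p$ large, and with $\eps_p:=2\,d_H(\widetilde\Omega_p,\Omega)/r$ the dilation $\Omega_p:=(1+\eps_p)\widetilde\Omega_p$ is still $C^\infty$ and convex, satisfies $\Omega\subset\Omega_p$ (since $(1+\eps_p)\widetilde\Omega_p=\widetilde\Omega_p+\eps_p\widetilde\Omega_p\supset\{x:\dist(x,\widetilde\Omega_p)\le\eps_p r\}\supset\Omega$), and still has $d_H(\partial\Omega_p,\partial\Omega)\to 0$. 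Discarding finitely many indices, we may assume $\overline{B(0,2r)}\subset\Omega\subset\Omega_p\subset B(0,R)$ for a fixed $R$ and all $p\in\NN$, and that all of the subsequent properties hold for every $p$.

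The atlas rests on the quantitative fact that an interior ball forces a uniform aperture of supporting cones: if $K$ is a convex body with $B(0,2r)\subset K\subset B(0,R)$ and $s\in\partial K$, then every supporting half-space $\{\langle x-s,\nu\rangle\le 0\}\supset K$ at $s$ contains $B(0,2r)$, whence $\langle s/|s|,\nu\rangle\ge 2r/R=:c_0>0$; consequently, in any coordinates whose last axis is a fixed direction $e$ with $\langle e,s/|s|\rangle$ close to $1$, the boundary $\partial K$ near $s$ is the graph of a concave function with gradient bounded by some $L=L(r,R)$. I would cover the compact set $\partial\Omega$ by finitely many open cubes $V_k$, each oriented along a radial direction $e_k=s_k/|s_k|$ with $s_k\in\partial\Omega\cap V_k$ and small enough (in terms of $r,R$ only) that $\partial\Omega\cap V_k$ is such a graph $y^k_n=h^k(z^k)$ over $V'_k$ with $|h^k|\le a^k_n/4$ and $|\nabla h^k|\le L$. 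Since $\Omega_p$ is convex with the same inner ball $B(0,2r)$ and the same outer ball $B(0,R)$, and $d_H(\partial\Omega_p,\partial\Omega)\to 0$, the identical argument shows that for $p$ large $\partial\Omega_p\subset\bigcup_k V_k$ and that $\partial\Omega_p\cap V_k$ is a \emph{full} graph $y^k_n=h^k_p(z^k)$ over $V'_k$ with $|\nabla h^k_p|\le L$; the sub-/super-graph descriptions of $\Omega\cap V_k$ and $\Omega_p\cap V_k$ follow from convexity (a line meets $\partial\Omega_p$ in at most two points, only one of which lies in $V_k$) together with $\Omega\subset\Omega_p$, which also gives $h^k\le h^k_p$, while $|h^k_p|\le a^k_n/2$ for $p$ large (from $0\le h^k_p-h^k\le (1+L)\,d_H(\partial\Omega_p,\partial\Omega)$, using the $L$-Lipschitz bounds) keeps $\partial\Omega_p$ away from the faces of $V_k$. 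The same inequality shows $h^k_p\to h^k$ uniformly on $V'_k$; and since $h^k$ and the $h^k_p$ are concave and converge uniformly, $\nabla h^k_p(z)\to\nabla h^k(z)$ at every point of differentiability of $h^k$ (a classical property of uniformly convergent concave functions, provable directly from the monotonicity of superdifferentials), hence a.e.\ by Rademacher's theorem, and the bound $|\nabla h^k_p|\le L$ persists a.e.

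The main difficulty is concentrated in the second half: producing one fixed family of charts that serves $\Omega$ and all large $p$ at once, over the \emph{same} projected cubes $V'_k$ and with \emph{uniform} Lipschitz constants. This is exactly what the ``interior ball $\Rightarrow$ uniform aperture'' estimate is for, and the only point that has to be watched is that the approximants $\Omega_p$, which stick out of $\Omega$, still lie inside $\bigcup_k V_k$ and do not escape the cubes through their top faces — which is where $d_H(\partial\Omega_p,\partial\Omega)\to 0$ and the slack $|h^k|\le a^k_n/4$ are used. The smoothing step and the a.e.\ convergence of gradients are then routine.
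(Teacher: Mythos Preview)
The paper does not give its own proof of this lemma: it is stated with the citation ``see e.g.\ \cite[Sec.~3.4]{schneider} and \cite[Lemma 3.2.3.2]{gris}'' and then used as a black box. Your proposal is a correct and reasonably detailed reconstruction of the standard argument behind those references. The two halves you identify are exactly the two ingredients the paper is citing: Schneider's smoothing of convex bodies for the existence of the $C^\infty$ convex approximants (plus your dilation step $(1+\eps_p)\widetilde\Omega_p=\widetilde\Omega_p+\eps_p\widetilde\Omega_p\supset\widetilde\Omega_p+B(0,\eps_p r)$ to force $\Omega\subset\Omega_p$, which is fine since $\lambda K+\mu K=(\lambda+\mu)K$ for convex $K$), and Grisvard's uniform-atlas lemma for the $p$-independent charts with common Lipschitz constant, whose engine is precisely your ``interior ball $\Rightarrow$ uniform aperture'' estimate $\langle s/|s|,\nu\rangle\ge 2r/R$.

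The only spots that would deserve one extra line in a full write-up are: (i) that for every $z^k\in V'_k$ the vertical line through $z^k$ actually meets $\partial\Omega_p$ \emph{inside} $V_k$ for all large $p$ (so that $h^k_p$ is defined on the whole of $V'_k$; this follows from the Hausdorff convergence and the slack $|h^k|\le a^k_n/4$ you built in), and (ii) the a.e.\ convergence $\nabla h^k_p\to\nabla h^k$, for which one can simply quote the classical fact that pointwise convergence of concave functions implies convergence of gradients at every differentiability point of the limit (e.g.\ Rockafellar, \emph{Convex Analysis}, Thm.~25.7). Neither is a gap in your outline.
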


Let us approximate $\Omega$ by bounded convex $C^\infty$-smooth domains $\Omega_p$ (with $p\in\NN$) as in Lemma~\ref{lem1} and consider
the associated Dirac operators
\[
\Tilde A_p:=A^{\Omega_p}_0,
\]
which are self-adjoint in $L^2(\Omega_p,\CC^N)$ by Lemma~\ref{lem2}.

Let $g\in L^2(\Omega,\CC^N)$. For each $p$ denote by $\Tilde g_p$ the continuation of $g$ by zero to $\Omega_p$ then due to the self-adjointness of $\Tilde A_p$ there is a unique $\Tilde f_p\in \dom \Tilde A_p$ with $(\Tilde A_p+\rmi)\Tilde f_p=\Tilde g_p$. Let $H_p$ be the mean curvature of $\partial\Omega_p$,
then $H_p\ge 0$ due to convexity of $\Omega_p$, and by Lemma~\ref{lem2} we obtain
\begin{equation}
	\label{ggff}
\begin{aligned}
\|g\|^2_{L^2(\Omega,\CC^N)}&=\|\Tilde g_p\|^2_{L^2(\Omega_p,\CC^N)}=\big\|(\Tilde A_p+\rmi)\Tilde f_p\big\|^2_{L^2(\Omega_p,\CC^N)}= \|\Tilde A_p \Tilde f_p\|^2_{L^2(\Omega_p,\CC^N)}+ \|\Tilde f_p\|^2_{L^2(\Omega_p,\CC^N)}\\
&=\int_{\Omega_p} \big(|\nabla \Tilde f_p|^2 +|\Tilde f_p|^2\big)\,\dd x+\frac{1}{2}\int_{\partial\Omega_p} H_p |\Tilde f_p|^2\dd \cH^{n-1}\\
& \ge \int_{\Omega_p} \big(|\nabla \Tilde f_p|^2 +|\Tilde f_p|^2\big)\,\dd x=\|\Tilde f_p\|^2_{H^1(\Omega_p,\CC^N)}.
\end{aligned}
\end{equation}
Denote by $f_p$ the restriction of $\Tilde f_p$ on $\Omega$, then
\begin{equation}
	\label{normineq-fp}
\|f_p\|_{H^1(\Omega,\CC^N)}\le \|\Tilde f_p\|_{H^1(\Omega_p,\CC^N)}\le \|g\|_{L^2(\Omega,\CC^N)},
\end{equation}
which shows that the sequence $(f_p)_{p\in\NN}$ is bounded in $H^1(\Omega,\CC^N)$ and, hence, contains a weakly convergent subsequence. To keep simple notation we assume (by replacing the initial sequence of $\Omega_p$ by the corresponding subsequence)
that $f_p\xrightarrow{p\to\infty}f$ weakly in $H^1(\Omega,\CC^N)$
for some $f\in H^1(\Omega,\CC^N)$. We are going to show that $f\in \dom A$ with $(A+\rmi)f=g$, which will conclude the proof
of self-adjointness for $A$.

Due to the compactness of the embedding $H^1(\Omega,\CC^N)\hookrightarrow L^2(\Omega,\CC^N)$ we have
\begin{equation}
\label{fpf}
 \|f_p-f\|_{L^2(\Omega,\CC^N)}\xrightarrow{p\to\infty}0.
\end{equation}
Let $\varphi\in C^\infty_c(\Omega,\CC^N)$ and $\Tilde\varphi_p$ be its extension by zero on $\Omega_p$. Then $\Tilde \varphi_p\in C^\infty_c(\Omega_p,\CC^N)\subset \dom \Tilde A_p$, and
\begin{align*}
\langle g,\varphi\rangle_{L^2(\Omega,\CC^N)}&=
\langle \Tilde g_p,\Tilde \varphi_p\rangle_{L^2(\Omega_p,\CC^N)}=
\big\langle (\Tilde A_p+\rmi)\Tilde f_p,\Tilde \varphi_p\big\rangle_{L^2(\Omega_p,\CC^N)}\\
&=\big\langle \Tilde f_p,(\Tilde A_p-\rmi)\Tilde \varphi_p\big\rangle_{L^2(\Omega_p,\CC^N)}
=\big\langle \Tilde f_p,(D_0-\rmi)\Tilde \varphi_p\big\rangle_{L^2(\Omega_p,\CC^N)}\\
&=\big\langle  f_p,(D_0-\rmi) \varphi\big\rangle_{L^2(\Omega,\CC^N)}\xrightarrow[\text{by \eqref{fpf}}]{p\to\infty}
\big\langle  f,(D_0-\rmi) \varphi\big\rangle_{L^2(\Omega,\CC^N)}.
\end{align*}
Hence,
\[
\int_\Omega \big\langle g(x),\varphi(x)\big\rangle_{\CC^N}\dd x=
\int_\Omega \big\langle f(x),(D_0-\rmi)\varphi(x)\big\rangle_{\CC^N}\dd x
\text{ for all }\varphi\in C^\infty_c(\Omega,\CC^N),
\]
which means $(D_0+\rmi)f=g$ in $\cD'(\Omega,\CC^N)$. As $f\in H^1(\Omega,\CC^N)$ by construction, it remains to check that $f$ satisfies the required boundary condition.

Let $V_k$, $V'_k$, $h^k$, $h^k_p$ and $L$ be as in Lemma~\ref{lem1}, and let $\nu^k(z^k)$ be the outer unit normal for $\Omega$ at the point $\big(z^k, h^k(z^k)\big)\in V_k\cap\partial\Omega$
and $\nu^k_p(z^k)$ be the outer unit normal for $\Omega_p$ at the point $\big(z^k, h^k_p(z^k)\big)\in V_k\cap \partial\Omega_p$, 
i.e.
\begin{align*}
	\nu^k(z^k)&=\dfrac{\big(-\nabla h^k(z^k),1\big)}{\sqrt{1+\big|\nabla h^k(z^k)\big|^2}},&
	\nu^k_p(z^k)&=\dfrac{\big(-\nabla h^k_p(z^k),1\big)}{\sqrt{1+\big|\nabla h^k_p(z^k)\big|^2}}.
\end{align*}
In order to show that the function $f$ satisfies the boundary condition we need to check that
for any $k\in\{1,\dots,K\}$ one has
\begin{multline*}
\int_{V_k\cap\partial\Omega} \big| (1+\rmi \beta\alpha\cdot \nu) f\big|^2\dd\cH^{n-1}\equiv
\int_{V'_k} \Big| \big(1+\rmi \beta\alpha\cdot \nu^k(z^k)\big) f\big(z^k,h^k(z^k)\big)\Big|^2\sqrt{1+\big|\nabla h^k(z^k)\big|^2}\dd z^k=0,
\end{multline*}
and in view of the bound $\big|\nabla h^k(z^k)\big|\le L$ this is equivalent to
\begin{equation}
	\label{lok1}
\int_{V'_k} \Big| \big(1+\rmi \beta\alpha\cdot \nu^k(z^k)\big) f\big(z^k,h^k(z^k)\big)\Big|^2\dd z^k=0.
\end{equation}

By construction, the functions $\Tilde f_p$ satisfy
$(1+\rmi \beta\alpha\cdot \nu_p) \Tilde f_p=0$ on $\partial\Omega_p$, with $\nu_p$ being the unit normal for $\Omega_p$, which means that
for any for any $k\in\{1,\dots,K\}$ it holds that
\[
\int_{V_k\cap\partial\Omega_p} \big| (1+\rmi \beta\alpha\cdot \nu_p) \Tilde f_p\big|^2\dd\cH^{n-1}=0,
\]
hence,
\begin{equation}
	\label{fp-lok1}
	\int_{V'_k} \Big| \big(1+\rmi \beta\alpha\cdot \nu^k_p(z^k)\big) \Tilde f_p\big(z^k,h^k(z^k)\big)\Big|^2\dd z^k=0.
\end{equation}
Our goal is to pass to the limit $p\to\infty$ in this last formula in order to obtain the required identities \eqref{lok1}.
As a preparation let us derive  some preliminary estimates.

\begin{lemma}\label{lem4}
For any $F\in H^1(\RR^n,\CC^N)$ any any $k\in\{1,\dots,K\}$ one has
\[
\int_{V'_k} \Big| F\big(z^k,h^k_p(z^k)\big)-F\big(z^k,h^k(z^k)\big)\Big|^2\dd z^k\xrightarrow{p\to\infty}0.
\]
\end{lemma}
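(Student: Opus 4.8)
The plan is to reduce the claim to the case of the constant function $h^k\equiv\text{const}$ (a flat slice) by a standard localisation and density argument. First I would observe that the map $F\mapsto F\big(z^k,h^k(z^k)\big)$ is nothing but the composition of the restriction of $F$ to the bi-Lipschitz graph $\{y^k_n=h^k(z^k)\}$ with the Lipschitz change of variables $z^k\mapsto\big(z^k,h^k(z^k)\big)$; since $h^k$ and all $h^k_p$ are uniformly Lipschitz with constant $L$ (Lemma~\ref{lem1}), the corresponding trace operators $F\mapsto F(\cdot,h^k(\cdot))$ and $F\mapsto F(\cdot,h^k_p(\cdot))$ from $H^1(\RR^n,\CC^N)$ into $L^2(V'_k,\CC^N)$ are bounded with operator norm controlled by a constant depending only on $L$ and $n$. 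This uniform bound will allow an $\eps/3$-argument.

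Second, since $C^\infty_c(\RR^n,\CC^N)$ is dense in $H^1(\RR^n,\CC^N)$, given $\eps>0$ I would pick $\Phi\in C^\infty_c(\RR^n,\CC^N)$ with $\|F-\Phi\|_{H^1(\RR^n,\CC^N)}<\eps$. By the uniform trace bound of the previous step,
\[
\int_{V'_k}\big|(F-\Phi)(z^k,h^k_p(z^k))\big|^2\dd z^k\le C\eps^2\quad\text{and}\quad \int_{V'_k}\big|(F-\Phi)(z^k,h^k(z^k))\big|^2\dd z^k\le C\eps^2
\]
uniformly in $p$. It then suffices to handle the smooth function $\Phi$. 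For $\Phi$ uniformly continuous on $\RR^n$ with modulus of continuity $\omega$, one has the pointwise bound $\big|\Phi(z^k,h^k_p(z^k))-\Phi(z^k,h^k(z^k))\big|\le\omega\big(|h^k_p(z^k)-h^k(z^k)|\big)$, and since $h^k_p\to h^k$ uniformly on $V'_k$ by Lemma~\ref{lem1}, this tends to $0$ uniformly in $z^k$; integrating over the bounded set $V'_k$ gives that the $\Phi$-contribution vanishes as $p\to\infty$. Combining the three estimates via the triangle inequality in $L^2(V'_k)$ and then letting $\eps\to 0$ yields the claim.

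The only genuinely non-routine point is the uniform boundedness of the trace operators $F\mapsto F(\cdot,h_p^k(\cdot))$ on a Lipschitz graph with a constant independent of $p$; this is where the uniform Lipschitz bound $|\nabla h^k_p|\le L$ is essential. It can be obtained by the usual trace inequality: for $\Phi\in C^\infty_c(\RR^n)$ and fixed $z^k\in V'_k$, write $|\Phi(z^k,h^k_p(z^k))|^2$ as an integral over the vertical half-line of $\partial_{y_n}\big(\chi(y_n)|\Phi(z^k,y_n)|^2\big)$ for a fixed cutoff $\chi$ supported below $2a^k_n$ and equal to $1$ below $a^k_n$ (recall $|h^k_p|\le a^k_n/2$), bound the integrand by $2|\Phi||\nabla\Phi|+|\chi'||\Phi|^2$, integrate in $z^k$ and use Cauchy--Schwarz; the resulting constant depends only on $n$ and $a^k_n$, not on $p$. (The Lipschitz bound $L$ enters only through the equivalence of $\dd z^k$ and the surface measure, which we have already quotiented out.) Density of $C^\infty_c(\RR^n,\CC^N)$ in $H^1$ then extends the bound to all $F\in H^1(\RR^n,\CC^N)$, and the same argument applied to the graph of $h^k$ gives the bound for that trace as well.
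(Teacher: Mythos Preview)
Your argument is correct, but the paper's proof is considerably shorter and more direct. Instead of a density argument, the paper writes the difference as an integral via the fundamental theorem of calculus,
\[
F\big(z^k,h^k_p(z^k)\big)-F\big(z^k,h^k(z^k)\big)=\int_{h^k(z^k)}^{h^k_p(z^k)}\partial_{y^k_n}F(z^k,t)\,\dd t,
\]
applies Cauchy--Schwarz in $t$, and then integrates over $z^k$ to obtain the explicit quantitative bound
\[
\int_{V'_k}\Big|F\big(z^k,h^k_p(z^k)\big)-F\big(z^k,h^k(z^k)\big)\Big|^2\dd z^k\le \sup_{V'_k}|h^k_p-h^k|\cdot\|F\|^2_{H^1(\RR^n,\CC^N)},
\]
which tends to zero by uniform convergence. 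This works directly for $F\in H^1$ (the vertical slices are absolutely continuous for a.e.\ $z^k$), so no approximation by smooth functions and no separate uniform trace estimate are needed. Your route is perfectly valid and the uniform trace bound you sketch is a reusable ingredient, but it is more laborious here; also note that your opening sentence about ``reducing to a flat slice'' does not actually describe what you then do---you never flatten the graph, you only use that the trace constants are uniform in $p$.
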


\begin{proof}
Using the Cauchy-Schwarz inequality we have
\begin{align*}
\int_{V'_k} \Big| F\big(z^k,h^k_p(z^k)\big)-&F\big(z^k,h^k(z^k)\big)\Big|^2\dd z^k
=
\int_{V'_k} \bigg|\int_{h^k(z^k)}^{h^k_p(z^k)} \partial_{y^k_n} F(z^k,t)\,\dd t\bigg|^2\dd z^k\\
&\le
\int_{V'_k} \big| h^k_p(z^k)-h^k(z^k)\big| \int_{h^k(z^k)}^{h^k_p(z^k)} \big|\partial_{y^k_n} F(z^k,t)\big|^2 \dd t\,\dd z^k\\
&\le \sup_{z^k\in V'_k} \big| h^k_p(z^k)-h^k(z^k)\big|\int_{V_k\cap \Omega_p} |\partial_{y^k_n} F|^2 \dd y^k\\
&\le \sup_{z^k\in V'_k} \big| h^k_p(z^k)-h^k(z^k)\big| \|F\|^2_{H^1(\RR^n,\CC^N)} \xrightarrow{p\to\infty}0
\end{align*}
due to the uniform convergence of $h^k_p$ to $h^k$ on $V'_k$.
\end{proof}

\begin{lemma}\label{lem5}
Let $F\in H^1(\RR^n,\CC^N)$ be a continuation of $f$ on $\RR^n$, then for any $k\in\{1,\dots,K\}$ one has
	\[
\int_{V'_k} \Big| F\big(z^k,h^k_p(z^k)\big)-\Tilde f_p\big(z^k,h^k_p(z^k)\big)\Big|^2\dd z^k\xrightarrow{p\to\infty}0.
	\]
\end{lemma}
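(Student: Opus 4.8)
The plan is to bound the $L^2(V'_k)$-norm of $F(\cdot,h^k_p)-\Tilde f_p(\cdot,h^k_p)$ by comparing both functions with their values on the \emph{fixed} graph $y^k_n=h^k(z^k)$, via the triangle inequality
\begin{align*}
\big\|F(\cdot,h^k_p)-\Tilde f_p(\cdot,h^k_p)\big\|_{L^2(V'_k)}
&\le \big\|F(\cdot,h^k_p)-F(\cdot,h^k)\big\|_{L^2(V'_k)}+\big\|F(\cdot,h^k)-\Tilde f_p(\cdot,h^k)\big\|_{L^2(V'_k)}\\
&\quad+\big\|\Tilde f_p(\cdot,h^k)-\Tilde f_p(\cdot,h^k_p)\big\|_{L^2(V'_k)},
\end{align*}
and to show that each summand on the right-hand side tends to $0$ as $p\to\infty$. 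Throughout I use that $F|_\Omega=f$ and $\Tilde f_p|_\Omega=f_p$, so that the value on the graph of $h^k$ is, in the two cases, the trace on $\partial\Omega\cap V_k$ of $f$, resp.\ of $f_p$.

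The first summand is precisely Lemma~\ref{lem4} applied to $F$. The third summand is handled by the same computation: for a.e.\ $z^k\in V'_k$ the fundamental theorem of calculus along the vertical segment $\{z^k\}\times\big(h^k(z^k),h^k_p(z^k)\big)\subset\Omega_p\cap V_k$ gives
\[
\Tilde f_p\big(z^k,h^k_p(z^k)\big)-\Tilde f_p\big(z^k,h^k(z^k)\big)=\int_{h^k(z^k)}^{h^k_p(z^k)}\partial_{y^k_n}\Tilde f_p(z^k,t)\,\dd t,
\]
the value at $h^k_p(z^k)$ being the trace of $\Tilde f_p\in H^1(\Omega_p,\CC^N)$ on $\partial\Omega_p$, which coincides with the a.e.\ vertical limit. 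Then the Cauchy-Schwarz inequality, exactly as in the proof of Lemma~\ref{lem4}, bounds the third summand by $\sup_{V'_k}|h^k_p-h^k|\cdot\|\Tilde f_p\|^2_{H^1(\Omega_p,\CC^N)}$, and since $\|\Tilde f_p\|^2_{H^1(\Omega_p,\CC^N)}\le\|g\|^2_{L^2(\Omega,\CC^N)}$ by \eqref{ggff} while $h^k_p\to h^k$ uniformly on $V'_k$, this goes to $0$.

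The second summand is the main point. In the coordinates $y^k$ it equals, up to the bounded factor $\sqrt{1+|\nabla h^k|^2}\in[1,\sqrt{1+L^2}]$, the $L^2(\partial\Omega\cap V_k)$-norm of the difference of the boundary traces of $f_p$ and $f$; it is therefore enough to show that the traces of $f_p$ converge to the trace of $f$ in $L^2(\partial\Omega,\CC^N)$. Since $\Omega$ is a bounded Lipschitz domain, the trace map $H^1(\Omega,\CC^N)\to L^2(\partial\Omega,\CC^N)$ is compact: it factors through the bounded trace $H^1(\Omega,\CC^N)\to H^{1/2}(\partial\Omega,\CC^N)$ and the compact embedding $H^{1/2}(\partial\Omega,\CC^N)\hookrightarrow L^2(\partial\Omega,\CC^N)$. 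A compact operator sends weakly convergent sequences to norm-convergent ones, and $f_p\to f$ weakly in $H^1(\Omega,\CC^N)$ by construction, so the traces converge strongly in $L^2(\partial\Omega,\CC^N)$; hence the second summand tends to $0$ and the lemma follows.

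I expect this last step to be the only real obstacle: the bulk convergence available for $(f_p)$ is merely strong in $L^2(\Omega,\CC^N)$ and weak in $H^1(\Omega,\CC^N)$, which by itself says nothing about the boundary, so one genuinely needs the compactness of the trace operator to promote weak $H^1$-convergence to strong $L^2$-convergence of traces. The two remaining summands, by contrast, are routine one-dimensional Cauchy-Schwarz estimates of exactly the type already carried out in Lemma~\ref{lem4}.
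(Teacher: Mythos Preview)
Your argument is correct and follows essentially the same route as the paper. The only cosmetic difference is that the paper works directly with the single function $\eta_p:=F-\Tilde f_p\in H^1(\Omega_p)$ and splits $\eta_p(z^k,h^k_p)$ into $\eta_p(z^k,h^k)$ plus the vertical integral of $\partial_{y^k_n}\eta_p$, whereas you apply the triangle inequality first and then carry out the crossing $h^k\leadsto h^k_p$ separately for $F$ and for $\Tilde f_p$; the two decompositions use the identical ingredients (compactness of the trace $H^1(\Omega)\to L^2(\partial\Omega)$ for the value on $\partial\Omega$, and the Cauchy--Schwarz estimate together with the uniform bound on $\|\Tilde f_p\|_{H^1(\Omega_p)}$ for the vertical crossing).
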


\begin{proof}
We need to show that
\[
\int_{V'_k} \Big|\eta_p\big(z^k,h^k_p(z^k)\big)\Big|^2\dd z^k\xrightarrow{p\to\infty}0 \text{ for the functions }\eta_p:=F-\Tilde f_p\in H^1(\Omega_p).
\]	
First remark that due to \eqref{normineq-fp} one can find a $c>0$ such that
\begin{equation}
	\label{etap2}
\|\eta_p\|_{H^1(\Omega_p,\CC^N)}\le c \text{ for all } p\in\NN.
\end{equation}
Furthermore, by assumption $\eta_p\to0$ weakly in $H^1(\Omega)$. Using the boundedness of the trace operator $H^1(\Omega)\to H^\frac{1}{2}(\partial \Omega)$ and the compactness of the embedding $H^\frac{1}{2}(\partial \Omega)\hookrightarrow L^2(\partial \Omega)$
we obtain
\[
\int_{\partial \Omega} |\eta_p|^2\dd\cH^{n-1}\xrightarrow{p\to\infty}0,
\]
and by combining with
\begin{align*}
\int_{V'_k} \Big|\eta_p\big(z^k,h^k(z^k)\big)\Big|^2\dd z^k&\le \dfrac{1}{\sqrt{1+L^2}}
\int_{V'_k} \Big|\eta_p\big(z^k,h^k(z^k)\big)\Big|^2\sqrt{1+\big|\nabla h^k(z^k)\big|^2}\dd z^k\\
&=\dfrac{1}{\sqrt{1+L^2}}\int_{V_k\cap \partial \Omega}|\eta_p|^2\dd\cH^{n-1}
\le \dfrac{1}{\sqrt{1+L^2}}\int_{\partial \Omega}|\eta_p|^2\dd\cH^{n-1}
\end{align*}
we conclude that
\begin{equation}
	\label{etap1}
\int_{V'_k} \Big|\eta_p\big(z^k,h^k(z^k)\big)\Big|^2\dd z^k\xrightarrow{p\to\infty}0.
\end{equation}

Now we estimate
\begin{align*}
\int_{V'_k}	\Big|\eta_p\big(z^k,&h^k_p(z^k)\big)\Big|^2\dd z^k=
	\int_{V'_k}\Big|\eta_p\big(z^k,h^k(z^k)\big)+\int_{h^k(z^k)}^{h^k_p(z^k)} \partial_{y^k_n} \eta_p(z^k,t)\,\dd t\Big|^2\dd z^k\\
	&\le 2\int_{V'_k}\Big|\eta_p\big(z^k,h^k(z^k)\big)\Big|\dd z^k
	+2\int_{V'_k}\Big|\int_{h^k(z^k)}^{h^k_p(z^k)} \partial_{y^k_n} \eta_p(z^k,t)\,\dd t\Big|^2\dd z^k.
\end{align*}
For $p\to\infty$ the first summand on the right-hand side converges to zero by \eqref{etap1}, and the second summand is easily controlled using Cauchy-Schwarz inequality:
\begin{align*}
	\int_{V'_k} \bigg|\int_{h^k(z^k)}^{h^k_p(z^k)} \partial_{y_n} \eta_p(z^k,t)\dd t\bigg|^2\dd z^k
	&\le
	\int_{V'_k} \big| h^k_p(z^k)-h^k(z^k)\big| \int_{h^k(z^k)}^{h^k_p(z^k)}\big|\partial_{y^k_n} \eta_p(z^k,t)\big|^2 \dd t\,\dd z^k\\
	&\le \sup_{z^k\in V'_k} \big| h^k_p(z^k)-h^k(z^k)\big|\int_{V_k\cap \Omega_p} |\partial_{y^k_n} \eta_p|^2 \dd y^k\\
	&\le \sup_{z^k\in V'_k} \big| h^k_p(z^k)-h^k(z^k)\big| \|\eta_p\|^2_{H^1(V_k\cap \Omega_p)}\\
\text{use \eqref{etap2}: }	&\le c^2\sup_{z^k\in V'_k} \big| h^k_p(z^k)-h^k(z^k)\big| \xrightarrow{p\to\infty}0
\end{align*}
due to the uniform convergence of $h^k_p$ to $h^k$ on $V'_k$.
\end{proof}

Let $k\in\{1,\dots,K\}$ and let $F\in H^1(\RR^n,\CC^N)$ be
an arbitrary continuation of $f$, then
\begin{gather}
	\label{nabla1}
		\|\beta \alpha\cdot \nu^k(z^k)\|\le 1 \text{ and }\|\beta \alpha\cdot \nu^k_p(z^k)\|\le 1
	\text{ for all $p\in\NN$ and a.e. } z^k\in V'_k,\\
\label{nabla2}
	\beta \alpha\cdot \nu^k_p(z^k)\xrightarrow{p\to\infty}\beta \alpha\cdot \nu^k(z^k) \text{ for a.e. } z^k\in V'_k.
\end{gather}
Using the representation
\begin{align*}
	\big(1+\rmi \beta\alpha\cdot \nu^k(z^k)\big)& f\big(z^k,h^k(z^k)\big)
	=\big(1+\rmi \beta\alpha\cdot \nu^k(z^k)\big) F\big(z^k,h^k(z^k)\big)\\
	&=\big(\rmi \beta\alpha\cdot \nu^k(z^k)-\rmi \beta\alpha\cdot \nu^k_p(z^k)\big)F\big(z^k,h^k(z^k)\big)\\
	&\quad +\big(1+\rmi \beta\alpha\cdot \nu^k_p(z^k)\big)\Big( F\big(z^k,h^k(z^k)\big)-F\big(z^k,h^k_p(z^k)\big)\Big)\\
	&\quad + \big(1+\rmi \beta\alpha\cdot \nu^k_p(z^k)\big)\Big( F\big(z^k,h^k_p(z^k)\big)-\Tilde f_p\big(z^k,h^k_p(z^k)\big)\Big)\\
	&\quad +\big(1+\rmi \beta\alpha\cdot \nu^k_p(z^k)\big) \Tilde f_p\big(z^k,h^k_p(z^k)\big)\Big)
\end{align*}
we obtain 
\begin{align}
	\label{lok2}
		\int_{V'_k}\Big|\big(1+&\rmi \beta\alpha\cdot \nu^k(z^k)\big) f\big(z^k,h^k(z^k)\big)\Big|^2\dd z^k\le 4(I_p^1+I_p^2+I_p^3+I_p^4)\\
		\text{for}\quad I^1_p&:=\int_{V'_k}\Big|\big(\rmi \beta\alpha\cdot \nu^k(z^k)-\rmi \beta\alpha\cdot \nu^k_p(z^k)\big)F\big(z^k,h^k(z^k)\big)\Big|^2\dd z^k,\nonumber \\
		I^2_p&:=\int_{V'_k}\Big|\big(1+\rmi \beta\alpha\cdot \nu^k_p(z^k)\big)\Big( F\big(z^k,h^k(z^k)\big)-F\big(z^k,h^k_p(z^k)\big)\Big)
		\Big|^2\dd z^k,\nonumber\\
		I^3_p&:=\int_{V'_k}\Big|
		\big(1+\rmi \beta\alpha\cdot \nu^k_p(z^k)\big)\Big( F\big(z^k,h^k_p(z^k)\big)-\Tilde f_p\big(z^k,h^k_p(z^k)\big)\Big)
		\Big|^2\dd z^k,\nonumber\\
		I^4_p&:=\int_{V'_k}\Big|
		\big(1+\rmi \beta\alpha\cdot \nu^k_p(z^k)\big) \Tilde f_p\big(z^k,h^k_p(z^k)\big)
		\Big|^2\dd z^k\stackrel{\eqref{fp-lok1}}{=}0.\nonumber
\end{align}
In virtue of \eqref{nabla1} and \eqref{nabla2}, the dominated convergence theorem implies $I^1_p\xrightarrow{p\to\infty}0$.
Using \eqref{nabla1} again we estimate
\begin{align*}
	I^2_p&\le 4\int_{V'_k}\Big|F\big(z^k,h^k(z^k)\big)-F\big(z^k,h^k_p(z^k)\big)\Big|^2\dd z^k
	\xrightarrow{p\to\infty}0 \text{ by Lemma \ref{lem4},}\\
	I^3_p&\le 4\int_{V'_k}\Big|F\big(z^k,h^k_p(z^k)\big)-\Tilde f_p\big(z^k,h^k_p(z^k)\big)\Big|^2\dd z^k\xrightarrow{p\to\infty}0\text{ by Lemma \ref{lem5}.}
\end{align*}
This shows that the right-hand side of \eqref{lok2} converges to $0$ for $p\to\infty$, hence, the left-hand side
is zero. The proof of self-adjointness is completed.

For the sake of completeness we include the following standard assertion:
\begin{lemma}\label{lem-equiv}
	 The graph norm of $A$ on $\dom A$ is equivalent to the $H^1$-norm, and  $A$ has compact resolvent.
\end{lemma}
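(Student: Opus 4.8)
The plan is to read off both assertions from the $H^1$-estimate for the resolvent that is already contained (implicitly) in the proof of self-adjointness just completed.

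\emph{Step 1 (the easy inequality).} For $f\in\dom A\subset H^1(\Omega,\CC^N)$ I would bound $\|Af\|_{L^2}=\|D_0f\|_{L^2}\le\sum_{k=1}^n\|\alpha_k\partial_kf\|_{L^2}=\sum_{k=1}^n\|\partial_kf\|_{L^2}\le\sqrt n\,\|\nabla f\|_{L^2}$, using that each $\alpha_k$ is unitary together with the Cauchy--Schwarz inequality. Hence the graph norm $\|f\|_A^2:=\|f\|^2_{L^2}+\|Af\|^2_{L^2}$ satisfies $\|f\|_A\le\sqrt n\,\|f\|_{H^1(\Omega,\CC^N)}$ on $\dom A$.

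\emph{Step 2 (the reverse inequality).} Here I would reuse the construction from the proof of Theorem~\ref{thm-selfadj}: for arbitrary $g\in L^2(\Omega,\CC^N)$ it produces some $f\in\dom A$ with $(A+\rmi)f=g$, obtained as a weak $H^1$-limit of restrictions $f_p$ satisfying $\|f_p\|_{H^1(\Omega,\CC^N)}\le\|g\|_{L^2(\Omega,\CC^N)}$ by \eqref{normineq-fp}; since the $H^1$-norm is weakly lower semicontinuous, $\|f\|_{H^1(\Omega,\CC^N)}\le\|g\|_{L^2(\Omega,\CC^N)}$. By the self-adjointness just established, $f=(A+\rmi)^{-1}g$ is the only such element, so $(A+\rmi)^{-1}$ is a bounded operator from $L^2(\Omega,\CC^N)$ into $H^1(\Omega,\CC^N)$ of norm at most $1$. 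Applying this to $g=(A+\rmi)f$ with $f\in\dom A$, and using that $\|(A+\rmi)f\|^2_{L^2}=\|Af\|^2_{L^2}+\|f\|^2_{L^2}=\|f\|^2_A$ because $A$ is symmetric, one gets $\|f\|_{H^1(\Omega,\CC^N)}\le\|f\|_A$. Combined with Step~1, the graph norm and the $H^1$-norm on $\dom A$ are equivalent.

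\emph{Step 3 (compact resolvent).} The operator $(A+\rmi)^{-1}\colon L^2(\Omega,\CC^N)\to L^2(\Omega,\CC^N)$ now factors as the bounded map $(A+\rmi)^{-1}\colon L^2(\Omega,\CC^N)\to H^1(\Omega,\CC^N)$ of Step~2 followed by the embedding $H^1(\Omega,\CC^N)\hookrightarrow L^2(\Omega,\CC^N)$, which is compact because $\Omega$ is bounded with Lipschitz boundary (Rellich--Kondrachov); hence $(A+\rmi)^{-1}$ is compact and $A$ has compact resolvent. There is no genuine obstacle here: the only slightly delicate point is to make explicit the $H^1$-bound used only implicitly above (the weak lower semicontinuity step) and to observe that it is precisely self-adjointness that upgrades ``$g$ has \emph{a} preimage of controlled $H^1$-norm'' to ``the resolvent is $H^1$-bounded''. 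Alternatively, Step~2 admits a soft functional-analytic replacement: $(\dom A,\|\cdot\|_A)$ is complete since $A$ is closed, $(\dom A,\|\cdot\|_{H^1})$ is complete since the MIT bag condition cuts out the kernel of the bounded map $f\mapsto(1+\rmi\beta\alpha\cdot\nu)f|_{\partial\Omega}$, hence a closed subspace of $H^1(\Omega,\CC^N)$, and the identity between these two Banach spaces is bounded by Step~1, so the bounded inverse theorem yields the reverse bound at once.
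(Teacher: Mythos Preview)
Your proof is correct. Your primary route in Step~2 differs from the paper's: the paper argues purely abstractly via the bounded inverse theorem (both norms make $\dom A$ complete, the identity is bounded one way by Step~1, hence bounded both ways), while you extract the explicit bound $\|(A+\rmi)^{-1}g\|_{H^1}\le\|g\|_{L^2}$ directly from \eqref{normineq-fp} together with weak lower semicontinuity of the $H^1$-norm. Your approach gives a sharper constant (namely $1$) for the reverse inequality, whereas the paper's soft argument yields no explicit constant but requires no reference back to the approximation construction. You in fact mention the paper's argument verbatim as your ``soft functional-analytic replacement'' at the end, so there is nothing to add.
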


\begin{proof}
The self-adjointness of $A$ implies that $\dom A$ is complete with respect to the graph norm. At the same time,
$\dom A$ is also complete with respect to the $H^1$-norm being the kernel of the bounded linear operator
$H^1(\RR^n,\CC^N)\ni f\mapsto (1+\rmi\beta\alpha\cdot\nu)f|_{\partial\Omega}\in L^2(\partial\Omega)$.
The embedding operator $(\dom A \text{ with $H^1$-norm}) \ni f\mapsto f\in (\dom A \text{ with the graph norm})$
is obviously bijective and bounded, hence, its inverse is bounded due to the closed graph theorem, and this shows that the both norms are equivalent. It follows that $\dom A$ with the graph norm is continuously embedded into $H^1(\Omega,\CC^N)$ and, therefore, compactly embedded in $L^2(\Omega,\CC^N)$, which gives the conclusion.
\end{proof}

\begin{remark}\label{rmk10}
	One easily sees that the convexity was actually used only to establish a uniform upper bound for the $H^1$-norms of the functions $\Tilde f_p$ in~\eqref{ggff}. In fact, the proof can be adapted to a larger class of domains of $\Omega$ as follows.
	Let us drop the convexity assumption but require instead that $\Omega$ can be approximated (in the sense described in Lemma~\ref{lem1})
	by $C^\infty$-smooth domains $\Omega_p$ such that the mean curvatures $H_p$ of their boundaries are uniformly bounded from below, $H_p\ge -2m$ for some $m>0$. Then one can consider the operators $A:=A^\Omega_m$ and $\Tilde A_p:=A^{\Omega_p}_m$.
	If $g\in L^2(\Omega,\CC^N)$ and $\Tilde g_p$ are its extensions by zero to $\Omega_p$, then we can find $\Tilde f_p\in\dom \Tilde A_p$
	with $(\Tilde A_p+\rmi)\Tilde f_p=\Tilde g_p$, and \eqref{ggff} can be replaced by
	\begin{align*}
		\|g\|^2_{L^2(\Omega,\CC^N)}&=\|\Tilde g_p\|^2_{L^2(\Omega_p,\CC^N)}=\|(\Tilde A_p+\rmi)\Tilde f_p\|^2_{L^2(\Omega_p,\CC^N)}= \|\Tilde A_p \Tilde f_p\|^2_{L^2(\Omega_p,\CC^N)}+ \|\Tilde f_p\|^2_{L^2(\Omega_p,\CC^N)}\\
		&=\int_{\Omega_p} \big(|\nabla \Tilde f_p|^2 +(m^2+1)|\Tilde f_p|^2\big)\,\dd x+\int_{\partial\Omega_p} \Big(m+\dfrac{H_p}{2}\Big) |\Tilde f_p|^2\dd \cH^{n-1}\\
		& \ge \int_{\Omega_p} \big(|\nabla \Tilde f_p|^2 +|\Tilde f_p|^2\big)\,\dd x=\|\Tilde f_p\|^2_{H^1(\Omega_p,\CC^N)},
	\end{align*}
	and the rest of the proof remains literally the same, which provides the self-adjointness of $A$ for such $\Omega$. Simple geometric considerations show that for $n=2$ these observations apply, for example, to curvilinear polygons without concave corners.
\end{remark}

\section{Approximations}\label{approx}

In this section we establish several approximation results for Sobolev spaces on piecewise smooth domains. The constructions
are inspired by \cite{CDN}. We need first a simple result on the approximation of non-smooth domains by smooth ones.

\begin{lemma}[``Rounding the corners'']\label{lem-corner}
	Let $\Omega\subset\RR^n$ be a bounded convex domain, then for any open neighborhood $V_0\subset\RR^n$ of its singular boundary $\partial_0\Omega$ one can find a bounded domain $\Omega_0\subset\RR^n$ with $C^\infty$-smooth boundary such that $\Omega$ and $\Omega_0$ coincide outside $V_0$.
\end{lemma}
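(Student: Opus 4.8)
The plan is to parametrize $\Omega$ radially from an interior point and to ``round the corners'' by a mollification of the radial function that is switched on only inside $V_0$. Fix $x_0\in\Omega$ and let $\rho\colon\SSS^{n-1}\to(0,\infty)$ be the radial function of $\Omega$ with respect to $x_0$, i.e.\ $\overline\Omega=\{x_0+t\omega:\ \omega\in\SSS^{n-1},\ 0\le t\le\rho(\omega)\}$; since $\Omega$ is a convex body, $\rho$ is Lipschitz and bounded and $\theta\colon\partial\Omega\to\SSS^{n-1}$, $\theta(x):=(x-x_0)/|x-x_0|$, is a homeomorphism. The key observation is that $\rho$ is $C^\infty$ near the direction of every regular boundary point: for $s\in\partial_\infty\Omega$ the ray $t\mapsto x_0+t\omega_0$ with $\omega_0:=\theta(s)$ crosses $\partial\Omega$ transversally at $t=\rho(\omega_0)$, because the outer normal obeys $\langle\nu(s),s-x_0\rangle>0$ ($x_0$ lying strictly inside the supporting halfspace of $\overline\Omega$ at $s$); writing $\partial\Omega=\{F=0\}$ with $F\in C^\infty$, $\nabla F\ne0$, near $s$, the function $(t,\omega)\mapsto F(x_0+t\omega)$ has non-vanishing $\partial_t$-derivative at $(\rho(\omega_0),\omega_0)$, so the implicit function theorem gives $\rho\in C^\infty$ near $\omega_0$. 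Hence $S_0:=\theta(\partial_0\Omega)$ is a compact subset of $\SSS^{n-1}$ and $\rho$ is $C^\infty$ in a neighbourhood of every point of $\SSS^{n-1}\setminus S_0$.

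Next I would localize the smoothing. Put $\Theta_0:=\theta(\partial\Omega\cap V_0)=\{\omega\in\SSS^{n-1}:\ x_0+\rho(\omega)\omega\in V_0\}$, an open subset of $\SSS^{n-1}$ containing $S_0$, choose open sets $S_0\subset\mathcal N_0\subset\overline{\mathcal N_0}\subset\mathcal N_1\subset\overline{\mathcal N_1}\subset\Theta_0$ and $\psi\in C^\infty(\SSS^{n-1})$ with $\psi\equiv1$ on $\mathcal N_0$, $\supp\psi\subset\mathcal N_1$, $0\le\psi\le1$, and for $\eps>0$ let $\rho_\eps\in C^\infty(\SSS^{n-1})$ be a smoothing of $\rho$ at scale $\eps$ (obtained e.g.\ by mollifying in coordinate charts, or by extending $\rho$ $0$-homogeneously to $\RR^n\setminus\{0\}$, convolving with a mollifier and restricting), so that $\|\rho_\eps-\rho\|_{C^0(\SSS^{n-1})}\xrightarrow{\eps\to0}0$. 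Then I set
\[
\Tilde\rho:=(1-\psi)\rho+\psi\rho_\eps,\qquad \Omega_0:=\{x_0\}\cup\big\{x_0+t\omega:\ \omega\in\SSS^{n-1},\ 0<t<\Tilde\rho(\omega)\big\}.
\]
The function $\Tilde\rho$ is in fact $C^\infty$: near a point of $\mathcal N_0$ one has $\psi\equiv1$, hence $\Tilde\rho=\rho_\eps$; near a point of $\SSS^{n-1}\setminus\mathcal N_0$ one has $\omega\notin S_0$ (because $S_0\subset\mathcal N_0$), hence $\rho,\rho_\eps,\psi$ are all smooth there. Moreover $\Tilde\rho$ is a convex combination of the bounded positive functions $\rho,\rho_\eps$, so $0<\inf_{\SSS^{n-1}}\Tilde\rho\le\sup_{\SSS^{n-1}}\Tilde\rho<\infty$ once $\eps$ is small. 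Consequently $\Phi\colon\SSS^{n-1}\to\RR^n$, $\Phi(\omega):=x_0+\Tilde\rho(\omega)\omega$, is a $C^\infty$ injective immersion — its differential $\omega^\perp\ni v\mapsto (d\Tilde\rho\,v)\omega+\Tilde\rho(\omega)v$ is injective since $\Tilde\rho>0$ — so $\partial\Omega_0=\Phi(\SSS^{n-1})$ is a compact embedded $C^\infty$ hypersurface and $\Omega_0$, being star-shaped with respect to $x_0$ (hence open, connected and bounded) with boundary $\Phi(\SSS^{n-1})$, is a bounded domain with $C^\infty$-smooth boundary.

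It remains to fix $\eps$ so that $\Omega$ and $\Omega_0$ coincide outside $V_0$, i.e.\ $\Omega\triangle\Omega_0\subset V_0$. For $y=x_0+t\omega$ with $t=|y-x_0|>0$ one has $y\in\Omega\iff t<\rho(\omega)$ and $y\in\Omega_0\iff t<\Tilde\rho(\omega)$, so the two memberships can disagree only when $t$ lies between $\rho(\omega)$ and $\Tilde\rho(\omega)$, which forces $\omega\in\supp\psi$ and $|t-\rho(\omega)|\le|\Tilde\rho(\omega)-\rho(\omega)|\le\|\rho_\eps-\rho\|_{C^0(\SSS^{n-1})}$. In that case $y$ lies within $\|\rho_\eps-\rho\|_{C^0(\SSS^{n-1})}$ of the point $x_0+\rho(\omega)\omega$, which belongs to the compact set $\mathcal K:=\{x_0+\rho(\omega)\omega:\ \omega\in\supp\psi\}\subset\partial\Omega\cap V_0$. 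Since $\mathcal K$ is compact and $V_0$ open there is $\delta>0$ with $\{x:\dist(x,\mathcal K)<\delta\}\subset V_0$; choosing $\eps$ small enough that also $\|\rho_\eps-\rho\|_{C^0(\SSS^{n-1})}<\delta$, we get $y\in V_0$ whenever the memberships disagree, i.e.\ $\Omega\triangle\Omega_0\subset V_0$, which finishes the proof.

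I expect the only genuinely delicate points to be the equivalence ``$s$ regular $\iff$ $\rho$ is smooth near $\theta(s)$'', for which the transversality of the rays to $\partial\Omega$ — hence the convexity of $\Omega$ — is essential, and the care required not to exploit any structure or smallness of the a priori arbitrary compact set $\partial_0\Omega$; the latter causes no trouble here precisely because $\psi$ only needs to equal $1$ on an \emph{open} neighbourhood of $S_0$, so that $\rho$ is automatically $C^\infty$ on the whole region where $\psi\not\equiv1$.
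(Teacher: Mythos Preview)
Your proof is correct and follows essentially the same strategy as the paper's: parametrize $\Omega$ radially from an interior point, observe that the radial function is $C^\infty$ away from the directions of $\partial_0\Omega$, and blend it with a globally smooth approximation via a cutoff supported in the projected neighbourhood of $\partial_0\Omega$. The paper's argument is almost verbatim yours, with $R$, $\Theta_0$, $\Theta_1$, $\Theta_2$, $\chi$, $R_1$ playing the roles of your $\rho$, $\Theta_0$, $\mathcal N_1$, $\mathcal N_0$, $\psi$, $\rho_\eps$; you supply a bit more detail (the implicit-function-theorem justification that $\rho$ is smooth at regular directions, and the immersion check for $\partial\Omega_0$), but the overall route is the same.
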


\begin{proof}
	Without loss of generality assume that $0\in\Omega$, then due to convexity we can describe $\Omega$ using polar coordinates,
	\[
	\Omega=\big\{r\theta:\, \theta\in\SSS^{n-1},\, 0\le r< R(\theta)\big\},\quad
	\partial\Omega=\big\{R(\theta)\theta:\, \theta\in\SSS^{n-1}\big\},
	\]
	with a Lipschitz function $R:\SSS^{n-1}\to(0,\infty)$ which is $C^\infty$-smooth on
	the open set
	\[
	S_\infty:=\big\{\theta\in\SSS^{n-1}:\, R(\theta)\theta\in \partial_\infty\Omega\big\}\subset\SSS^{n-1}.
	\]
Remark $\partial_0\Omega$ is compact and that the Lipschitz map
\[
\Pi:\ \RR^n\setminus\{0\}\ni x\mapsto \dfrac{x}{|x|}\in\SSS^{n-1}
\]
is the inverse of $\theta\mapsto R(\theta)\theta$, which means that
$S_0:=\SSS^{n-1}\setminus S_\infty\equiv \Pi(\partial_0\Omega)$
is a compact subset of the open set $\Theta_0:=\Pi(V_0)\subset\SSS^{n-1}$, and
$\big\{R(\theta)\theta:\, \theta\in S_0\big\}=\partial_0\Omega\subset V_0$.

We now choose two open neighborhoods $\Theta_1,\Theta_2$ of $S_0$ in $\SSS^{n-1}$
such that $\overline{\Theta_2}\subset \Theta_1$ and $\overline{\Theta_1}\subset \Theta_0$.
Due to compactness of $\overline{\Theta_1}$ there is an $\eps>0$ such that
\[
\big\{r\theta: \, \theta\in \overline{\Theta_1},\, \big|r-R(\theta)\big|\le\eps\big\}\subset V_0,
\]	
and we can choose a $C^\infty$-function $R_1:\Theta_1\to(0,\infty)$ such that $\big|R_1(\theta)-R(\theta)\big|\le\eps$ for all $\theta\in \Theta_1$.
Now choose a cut-off function $\chi\in C^\infty(\SSS^{n-1})$ with $0\le \chi\le 1$ such that $\supp\chi\subset \Theta_1$
and $\chi=1$ on $\Theta_2$, then
$R_0:=(1-\chi)R+\chi R_1:\SSS^{n-1}\to(0,\infty)$ is smooth, and the graphs of $\theta\mapsto R(\theta)\theta$ and $\theta\mapsto R_0(\theta)\theta$ coincide outside $V_0$. It follows that
the domain
\[
\Omega_0:=\big\{r\theta:\, \theta\in\SSS^{n-1},\, 0\le r< R_0(\theta)\big\}
\]
satisfies all the requirements.	
\end{proof}

The following assertion is \cite[Lem.~2.4]{CDN} with a reformulation adapted to the subsequent use:
\begin{lemma}\label{lem-cdn}
	Let $\chi\in C^\infty_c(\RR)$ with  $0\le\chi\le 1$ such that $\supp\chi\in(-1,1)$ and $\chi=1$ in an open neighborhood of $0$.	For $\alpha\in(0,1)$ and $n\in\NN$ define functions
	\begin{gather*}
	\rho_{\alpha,n}:\ [0,\infty)\ni t \mapsto \min\{t^\alpha,1\}\big(1-\chi(n t)\big)\in\RR.
	\end{gather*}
	Then one can choose $\alpha_j\xrightarrow{j\to\infty} 0$ and $n_j\xrightarrow{j\to\infty}\infty$ such that the functions
	\[
	\psi_j:\ \RR^2\ni x\mapsto \rho_{\alpha_j,n_j}\big(|x|\big)\in\RR
	\]
	satisfy\textup{:}
	\begin{itemize}
		\item[(a)] for any $j$ one has $0\le \psi_j\le 1$, 
		the function $\psi_j$ vanishes in an open neighborhood of $0$ in $\RR^2$,
		and $\psi_j(x)\xrightarrow{j\to\infty}1$ for all $x\ne 0$,
		\item[(b)] $\|\psi_j f-f\|_{H^1(\RR^2)}\xrightarrow{j\to\infty}0$ for any $f\in H^1(\RR^2)$,	
		\item[(c)] there is a constant $C>0$ such that $\|\psi_j f\|_{H^1(\RR^2)}\le C\|f\|_{H^1(\RR^2)}$ for any $f\in H^1(\RR^2)$ and any $j\in\NN$.
	\end{itemize} 
\end{lemma}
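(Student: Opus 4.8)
The plan is to read off item~\textup{(a)} directly from the explicit form of $\rho_{\alpha,n}$, and to reduce items~\textup{(b)} and~\textup{(c)} to a single estimate for $\|f\,\nabla\psi_{\alpha,n}\|_{L^2(\RR^2)}$, which is then made bounded (for~\textup{(c)}) and small (for~\textup{(b)}) by a suitable choice of the sequences $(\alpha_j)$, $(n_j)$. First, since $\min\{t^\alpha,1\}=1$ and $\chi(nt)=0$ for $t\ge1$ while $\chi(nt)=1$ for $t$ in a neighbourhood of $0$, one has $0\le\rho_{\alpha,n}\le1$, $\rho_{\alpha,n}\equiv1$ on $[1,\infty)$, $\rho_{\alpha,n}\equiv0$ near $0$, and $\rho_{\alpha,n}(t)\to1$ as $\alpha\to0$, $n\to\infty$ for every fixed $t>0$; this already yields~\textup{(a)} for any sequences with $\alpha_j\to0$, $n_j\to\infty$. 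Differentiating $\rho_{\alpha,n}(t)=t^\alpha(1-\chi(nt))$ on $(0,1)$ and using $0\le1-\chi\le1$ together with $\supp\chi'(n\,\cdot)\subset(0,1/n)$ gives the pointwise bound
\[
\bigl|\rho'_{\alpha,n}(t)\bigr|\le\alpha\,t^{\alpha-1}\,\one_{(0,1)}(t)+C_\chi\,n^{1-\alpha}\,\one_{(0,1/n)}(t),
\]
with $C_\chi$ depending only on $\chi$; in particular $\nabla\psi_{\alpha,n}$ is supported in $\{x\in\RR^2:0<|x|\le1\}$. Since $\|\psi_{\alpha,n}f-f\|_{H^1(\RR^2)}^2\le\|(\psi_{\alpha,n}-1)f\|_{L^2}^2+2\|(\psi_{\alpha,n}-1)\nabla f\|_{L^2}^2+2\|f\,\nabla\psi_{\alpha,n}\|_{L^2}^2$ and $\|\psi_{\alpha,n}f\|_{H^1(\RR^2)}^2\le\|f\|_{L^2}^2+2\|\nabla f\|_{L^2}^2+2\|f\,\nabla\psi_{\alpha,n}\|_{L^2}^2$ (using $0\le\psi_{\alpha,n}\le1$), and since the first two terms in the former tend to $0$ by dominated convergence (because $\psi_{\alpha,n}\to1$ a.e.\ with $0\le\psi_{\alpha,n}\le1$), everything reduces to controlling $\|f\,\nabla\psi_{\alpha,n}\|_{L^2(\RR^2)}$.

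For this I would pass to polar coordinates. Given $f\in H^1(\RR^2)$, let $\bar f(r)$ denote the average of $f$ over the circle of radius $r$ (a locally absolutely continuous function of $r$) and put $H(r):=\int_{\SSS^1}|\nabla f(r\theta)|^2\,\dd\theta$, so that $\int_0^1 rH(r)\,\dd r=\|\nabla f\|_{L^2(\{|x|<1\})}^2$. Wirtinger's inequality on $\SSS^1$ gives $\int_{\SSS^1}|f(r\theta)-\bar f(r)|^2\,\dd\theta\le r^2H(r)$ for a.e.\ $r$, and integrating $\bar f'$ together with Cauchy--Schwarz yields $|\bar f(r)-\bar f(R)|\le(2\pi)^{-1/2}\|\nabla f\|_{L^2(\{|x|<R\})}\bigl(\log(R/r)\bigr)^{1/2}$ for $0<r<R$. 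Taking $R$ of order one gives the uniform bound $|\bar f(r)|^2\le C\|f\|_{H^1(\RR^2)}^2\bigl(1+\log(1/r)\bigr)$ on $(0,1)$; moreover, since $\|\nabla f\|_{L^2(\{|x|<R\})}\to0$ as $R\to0$, for every $\eps>0$ there are $R_\eps>0$ and $C_{\eps,f}<\infty$ with $|\bar f(r)|^2\le C_{\eps,f}+\eps\log(1/r)$ for $0<r<R_\eps$.

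Finally I would write $|f|^2\le2|\bar f|^2+2|f-\bar f|^2$, insert the bound for $\rho'_{\alpha,n}$, and integrate in polar coordinates. The contribution of $|f-\bar f|^2$ is, by Wirtinger, at most $C\int_0^1 r^3H(r)\bigl|\rho'_{\alpha,n}(r)\bigr|^2\,\dd r$, and since $r^3|\rho'_{\alpha,n}(r)|^2\le2(\alpha^2+C_\chi^2 n^{-2\alpha})r$ this is $\le C(\alpha^2+n^{-2\alpha})\|\nabla f\|_{L^2}^2$. The contribution of $|\bar f|^2$ is $\le C\int_0^1|\bar f(r)|^2|\rho'_{\alpha,n}(r)|^2 r\,\dd r$; splitting it over $\{1/n\le r\le1\}$ (where $|\rho'_{\alpha,n}(r)|=\alpha r^{\alpha-1}$) and the thin annulus $\{r<1/n\}$, and using $\int_0^1(1+\log\tfrac1r)r^{2\alpha-1}\,\dd r=\tfrac1{2\alpha}+\tfrac1{4\alpha^2}$ with the uniform bound for $|\bar f|^2$, one gets $\le C\bigl(1+(1+\log n)n^{-2\alpha}\bigr)\|f\|_{H^1}^2$. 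Hence $\|f\,\nabla\psi_{\alpha,n}\|_{L^2(\RR^2)}^2\le C\bigl(1+\alpha^2+(1+\log n)n^{-2\alpha}\bigr)\|f\|_{H^1(\RR^2)}^2$. I would then fix any $\alpha_j\to0$ and choose $n_j\to\infty$ so large that $n_j^{-\alpha_j}(1+\log n_j)\le1/j$: for these sequences the prefactor stays bounded, which proves~\textup{(c)}. For~\textup{(b)} one redoes the estimate of the $|\bar f|^2$-part, on $\{r<R_\eps\}$ using the refined bound $|\bar f(r)|^2\le C_{\eps,f}+\eps\log(1/r)$ in place of the uniform one (and the uniform bound on the fixed region $\{R_\eps\le r\le1\}$); then the only term that did not already vanish carries the free factor $\eps$ rather than $\|f\|_{H^1}^2$, so after letting $\alpha_j\to0$, $n_j\to\infty$ one obtains $\limsup_j\|f\,\nabla\psi_{\alpha_j,n_j}\|_{L^2}^2\le C\eps$ for every $\eps>0$, hence $\|f\,\nabla\psi_{\alpha_j,n_j}\|_{L^2}\to0$, which together with the first paragraph gives~\textup{(b)}.

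The step I expect to be the main obstacle is exactly the control of $\|f\,\nabla\psi_{\alpha_j,n_j}\|_{L^2}$. On the one hand, the derivative of the factor $t^\alpha$ is spread over the whole, non-shrinking annulus $\{1/n\le|x|\le1\}$ and can only be absorbed by letting $\alpha\to0$; on the other hand, the derivative of the inner cut-off $\chi(n|x|)$ produces a spike of height $\sim n^{1-\alpha}$ near the origin, harmless only when $n^\alpha$ is large --- reconciling the two forces the coupling $\alpha_j\log n_j\to\infty$. Moreover, because $H^1(\RR^2)$ does not embed into $L^\infty$, the mere convergence $\|\nabla\psi_j\|_{L^2}\to0$ does not suffice: one genuinely needs the logarithmic growth bound for the angular average of $f$, and especially its refinement from $O(\log(1/r))$ to $o(\log(1/r))$, in order to conclude that $\int_{\RR^2}|f|^2|\nabla\psi_j|^2\to0$.
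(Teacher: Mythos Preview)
Your argument is correct. Note, however, that the paper itself does not give an independent proof of this lemma: it quotes part~(b) directly from \cite[Lem.~2.4]{CDN}, reads off part~(a) from the explicit form of $\rho_{\alpha,n}$ (as you also do), and then obtains part~(c) from~(b) in one line via the Banach--Steinhaus theorem, since the maps $f\mapsto\psi_j f$ are bounded linear operators on $H^1(\RR^2)$ that converge strongly to the identity.

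Your route is genuinely different: instead of citing the reference and invoking uniform boundedness, you reprove the core estimate from scratch by decomposing $f$ into its angular average $\bar f$ and its oscillatory part, controlling the latter with Wirtinger on $\SSS^1$ and the former with the logarithmic growth bound $|\bar f(r)|^2\le C\|f\|_{H^1}^2(1+\log(1/r))$ (and its $o(\log(1/r))$ refinement). This yields~(c) as a quantitative byproduct rather than as an abstract consequence of~(b), and makes explicit why the coupling $\alpha_j\log n_j\to\infty$ is the correct regime. The paper's approach is much shorter and avoids all computation, but is less self-contained; yours is longer but transparent about where the two-dimensionality enters (failure of the $H^1\hookrightarrow L^\infty$ embedding, compensated precisely by the logarithmic BMO-type control of the average) and gives explicit constants.
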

 Remark that \cite{CDN} only states (b) explicitly. The part (a) follows from the explicit structure of $\psi_j$, and (c) follows from (b) in virtue of Banach-Steinhaus theorem.

\begin{lemma}[Cut-off near a submanifold]\label{lem-subman}
	Let $n\ge 2$ and  $\Gamma$ be a compact $(n-2)$-dimensional submanifold in $\RR^n$.
	For $x\in\RR^n$ let $d(x)$ be the distance from $x$ to $\Gamma$. Then the functions
	$\varphi_j:=\psi_j\circ d$ with $\psi_j$ from Lemma~\ref{lem-cdn}
	satisfy
	\begin{equation}
		\label{psi0}
	\|\varphi_j f-f\|_{H^1(\RR^n)}\xrightarrow{j\to\infty}0 \text{ for any } f\in H^1(\RR^n).
	\end{equation}
	In addition, $\varphi_j=0$ in an open neighborhood of $\Gamma$,
	with $0\le \varphi_j\le 1$, and $\varphi_j(x)\xrightarrow{j\to\infty}1$
	for any $x\in\RR^n\setminus\Gamma$.
\end{lemma}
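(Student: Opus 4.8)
The plan is to reduce the statement to the two-dimensional model case handled by Lemma~\ref{lem-cdn} by working in tubular coordinates around $\Gamma$. First I would recall that since $\Gamma$ is a compact $C^\infty$ submanifold of codimension $2$, there is a tubular neighborhood $\mathcal{U}$ of $\Gamma$ in $\RR^n$ on which the normal exponential map provides a diffeomorphism $\Phi:\Gamma\times B_\delta^2\to\mathcal{U}$ (with $B_\delta^2\subset\RR^2$ the disc of radius $\delta$), under which the distance function $d$ becomes $d(\Phi(s,v))=|v|$ for $|v|<\delta$, and outside $\mathcal{U}$ one has $d\ge\delta>0$. The key qualitative properties of $\varphi_j=\psi_j\circ d$ — namely $0\le\varphi_j\le 1$, $\varphi_j=0$ in a neighborhood of $\Gamma$ (because $\psi_j$ vanishes near $0$), and $\varphi_j(x)\to 1$ for $x\notin\Gamma$ (pointwise, since $\psi_j(v)\to 1$ for $v\ne 0$) — follow immediately from part~(a) of Lemma~\ref{lem-cdn} together with the fact that $d$ is Lipschitz and vanishes exactly on $\Gamma$; these require no real work. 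The substance of the lemma is the $H^1$-convergence \eqref{psi0}.

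For \eqref{psi0}, I would first reduce to a neighborhood of $\Gamma$: fix $\eta>0$ small so that $d<2\eta$ defines a set contained in $\mathcal{U}$, and take $j$ large enough that $\psi_j=1$ on $[\eta,\infty)$ (possible since $\psi_j$ converges locally uniformly to $1$ away from $0$ and is monotone-ish in the relevant range — more precisely, since $\supp(1-\psi_j)$ shrinks to $\{0\}$). Then $\varphi_j f - f$ is supported in $\{d<\eta\}\subset\mathcal{U}$, so it suffices to estimate $\|\varphi_j f-f\|_{H^1(\mathcal{U})}$. Next I would pull back to coordinates via $\Phi$: writing $g:=f\circ\Phi\in H^1(\Gamma\times B_\delta^2)$ and $\tilde\varphi_j(s,v):=\psi_j(|v|)=:\tilde\psi_j(v)$, the change of variables shows $\|\varphi_j f-f\|_{H^1(\mathcal{U})}$ is controlled (with constants depending only on the bi-Lipschitz bounds and the metric, uniformly near $\Gamma$) by $\|\tilde\psi_j g - g\|_{H^1(\Gamma\times B_\delta^2)}$. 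Now Fubini in the $s$-variable, together with parts (b) and (c) of Lemma~\ref{lem-cdn} applied to the functions $v\mapsto g(s,v)$ (and to their $s$-derivatives, using the product structure so that $\partial_s(\tilde\psi_j g)=\tilde\psi_j\,\partial_s g$), plus dominated convergence in $s\in\Gamma$ — with the $L^1(\Gamma)$-dominant supplied by the uniform bound (c) and $g\in H^1$ — yields $\|\tilde\psi_j g - g\|_{H^1(\Gamma\times B_\delta^2)}\to 0$.

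The main obstacle I anticipate is the bookkeeping in the pull-back step: the Euclidean $H^1$-norm on $\mathcal{U}$ is not literally the product $H^1$-norm on $\Gamma\times B_\delta^2$, since $\Phi$ mixes the $s$- and $v$-derivatives and the Jacobian is a nontrivial smooth function. The cure is that all these distortions are uniformly bounded (above and below) on a compact tubular neighborhood, so the two norms are equivalent; and crucially $\tilde\psi_j$ depends on $v$ only, so $\nabla(\tilde\psi_j g) = \tilde\psi_j\nabla g + g\,(\nabla\tilde\psi_j)$ with $\nabla\tilde\psi_j$ pointing in the $v$-directions — exactly the structure exploited in Lemma~\ref{lem-cdn}. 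A secondary technical point is justifying the exchange of limit and integration over $\Gamma$, which is handled by the uniform bound in part~(c). Once these are in place, \eqref{psi0} follows, and the auxiliary properties of $\varphi_j$ have already been recorded.
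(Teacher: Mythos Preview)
Your overall strategy---qualitative properties from part~(a), reduction to a tubular neighborhood, then Fubini in the $s$-variable combined with parts~(b) and~(c) of Lemma~\ref{lem-cdn} and dominated convergence---is the same as the paper's, and the tubular-coordinate step is handled correctly. However, the reduction step contains a genuine error: your claim that one can take $j$ large enough so that $\psi_j=1$ on $[\eta,\infty)$, equivalently that $\supp(1-\psi_j)$ shrinks to $\{0\}$, is false. Recall the explicit form $\rho_{\alpha,n}(t)=\min\{t^\alpha,1\}\big(1-\chi(nt)\big)$: for $0<t<1$ and $j$ large one has $\rho_{\alpha_j,n_j}(t)=t^{\alpha_j}<1$, so $1-\psi_j$ is supported on the whole unit ball for every $j$. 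Consequently $\varphi_j f-f$ is \emph{not} supported in $\{d<\eta\}$, and the reduction to $\mathcal{U}$ fails as stated.

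The fix is easy but cannot be skipped: you must treat the annular region $\{\eta\le d\le 1\}$ by a separate direct estimate. There $(\varphi_j f)(x)=d(x)^{\alpha_j}f(x)$ for large $j$, so $|\varphi_j f-f|\le(1-\eta^{\alpha_j})|f|\to 0$ and $|\nabla(\varphi_j f)-\nabla f|\le(1-\eta^{\alpha_j})|\nabla f|+\alpha_j\,\eta^{\alpha_j-1}|f|$, both of which go to $0$ in $L^2$ by dominated convergence since $\alpha_j\to 0$. This is exactly what the paper does before passing to tubular coordinates on $\{d<\eta\}$. With this extra paragraph inserted, your argument is complete and essentially identical to the paper's.
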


\begin{proof}
For $r>0$ denote $B_2(0,r):=\{x\in\RR^2:\, |x|<r\}$. It is a classical result of differential geometry that for  some sufficiently small $\eps>0$ the $\eps$-neighborhood $\Gamma_\eps$ of $\Gamma$ admits a tubular neighborhood, see e.g. \cite[Thm.~10.19]{lee}. In the coordinate form this means that one can cover $\Gamma_\eps$ by finitely many open sets $V_1,\dots,V_L$ such that 
for any $s\in \Gamma_\ell:=V_\ell\cap\Gamma$ one can choose an orthonormal basis $\big(n_1(s),n_2(s)\big)$ in the normal space to $\Gamma$ at $s$ depending smoothly on $s$ and the maps
\[
\Phi_\ell: \Gamma_\ell\times B_2(0,\eps)\ni (s,t_1,t_2)\mapsto s+t_1 n_1(s)+t_2 n_2(s)\in V_\ell,
\]
are diffeomorphisms, with $d\big(\Phi_\ell(s,t_1,t_2)\big)=\sqrt{t_1^2+t^2_2}=|t|$, and the map $f\mapsto f\circ \Phi_\ell$
defines an isomorphism between $H^1(V_\ell)$ and $H^1\big(\Gamma_\ell\times B_2(0,\eps)\big)$.
Without loss of generality we assume that $\eps\in (0,1)$.

Let $f\in H^1(\RR^n)$. Note first that by construction one has $\varphi_j f=f$ outside $\Gamma_1$.
Furthermore,
for $x\in\Gamma_1\setminus\Gamma_\eps$ and all sufficiently large $j$ one has $(\varphi_j f)(x)= d(x)^{\alpha_j}f(x)$, and using $\alpha_j\to 0$ and the dominated convergence theorem we arrive at
$\|\varphi_j f-f\|_{L^2(\Gamma_1\setminus\Gamma_\eps)}\xrightarrow{j\to\infty} 0$. In addition, for the same $x$ one has
\begin{gather*}
\nabla (\varphi_j f)(x)-\nabla f(x)=
d(x)^{\alpha_j} \nabla f(x)-\nabla f(x)
+\alpha_j \dfrac{\nabla d(x)}{d(x)^{1-\alpha_j}} f(x),\\
\int_{\Gamma_1\setminus\Gamma_\eps} \big|\nabla (\varphi_j f)(x)-\nabla f\big|^2\dd x
\le
2\int_{\Gamma_1\setminus\Gamma_\eps} |d^{\alpha_j} \nabla f-\nabla f|^2\dd x
+2\alpha_j^2 \int_{\Gamma_1\setminus\Gamma_\eps}
\Big|\dfrac{\nabla d}{d^{1-\alpha_j}}\Big|^2|f|^2\dd x.
\end{gather*}
The first summand on the right-hand side of the last inequality
converges to zero by the dominated convergence theorem, while for the second summand we have
\[
2\alpha_j^2 \int_{\Gamma_1\setminus\Gamma_\eps}
\Big|\dfrac{\nabla d}{d^{1-\alpha_j}}\Big|^2|f|^2\dd x
\le \dfrac{2\alpha_j^2}{\eps^{2(1-\alpha_j)}}
\int_{\Gamma_1\setminus\Gamma_\eps}|f|^2\dd x\xrightarrow{j\to\infty}0.
\]

The above considerations show that $\|\varphi_j f-f\|_{H^1(\RR^n\setminus \overline{\Gamma_\eps})}\xrightarrow{j\to\infty}0$, and it remains to check $\|\varphi_j f-f\|_{H^1(\Gamma_\eps)}\xrightarrow{j\to\infty}0$. In view of the preceding discussion
it is sufficient to show that for each $\ell\in\{1,\dots,L\}$ one has
\[
\Hat f_j\xrightarrow{j\to\infty}\Hat f \text{ in }H^1\big(\Gamma_\ell\times B_2(0,\eps)\big),
\quad  
\Hat f_j:=(\varphi_j f)\circ \Phi_\ell,\quad \Hat f:=f\circ \Phi_\ell.
\]
One has $\Hat f_j (s,t)=\psi_j(t)\Hat f(s,t)$, in particular, $\nabla_s\Hat f_j=\nabla_s\Hat f$,  and
\begin{multline*}
\|\Hat f_j-\Hat f\|^2_{H^1(\Gamma_\ell\times B_2(0,\eps_0))}=\int_{\Gamma_\ell}
\Big(
\big\|\psi_j f(s,\cdot)-f(s,\cdot)\big\|^2_{H^1(B_2(0,\eps))}\\
+\|\psi_j \nabla_s f(s,\cdot)-\nabla_s f(s,\cdot)\|^2_{L^2(B_2(0,\eps))}\Big)\dd \cH^{n-2}(s).
\end{multline*}
The subintegral expression admits integrable upper bound
and converges to $0$ for a.e. $s\in \Gamma_\ell$ as $j\to\infty$  by Lemma~\ref{lem-cdn},
so the whole integral  converges to $0$ for $j\to\infty$ by the dominated convergence theorem. This completes the proof of \eqref{psi0}, and the remaining claims on $\varphi_j$ follow from the  properties of $\psi_j$ in Lemma \ref{lem-cdn}.
\end{proof}

\begin{corollary}\label{cor-rhoj}
Let $\Omega\subset\RR^n$ be a bounded domain with piecewise smooth boundary. Then one can find functions $\rho_j\in C^\infty(\RR^n)$ with $j\in\NN$ such that
\[
0\le\rho_j\le 1,\quad \rho_j=0 \text{ in an open neighborhood of }\partial_0\Omega,
\quad 
\rho_j(x)\xrightarrow{j\to\infty}1 \text{ for a.e. }x\in \RR^n
\]
and $\rho_j f\xrightarrow{j\to\infty}f$ in $H^1(\Omega)$ for any  $f\in H^1(\Omega)$.
\end{corollary}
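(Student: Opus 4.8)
The plan is to cover $\partial_0\Omega$ by finitely many compact $(n-2)$-dimensional submanifolds, apply Lemma~\ref{lem-subman} to each of them, make the resulting cut-offs genuinely $C^\infty$, and multiply them together; the only step that is not routine is the passage to smooth cut-offs, since Lemma~\ref{lem-subman} only delivers Lipschitz ones.

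By Definition~\ref{def-pcs} I would fix compact $(n-2)$-dimensional submanifolds $\Gamma_1,\dots,\Gamma_M\subset\RR^n$ with $\partial_0\Omega\subset\bigcup_{i=1}^M\Gamma_i$, let $d_i$ be the distance to $\Gamma_i$, and let $\varphi^{(i)}_j:=\psi_j\circ d_i$ be the functions provided by Lemma~\ref{lem-subman} (built from the $\rho_{\alpha_j,n_j}$ of Lemma~\ref{lem-cdn}). The key observation is that $\varphi^{(i)}_j$ is already $C^\infty$ on a \emph{fixed} neighbourhood $W_i$ of $\Gamma_i$, independent of $j$: inside the tubular neighbourhood of $\Gamma_i$ the distance $d_i$ is smooth off $\Gamma_i$, on $(0,1)$ one has $\rho_{\alpha_j,n_j}(t)=t^{\alpha_j}\bigl(1-\chi(n_j t)\bigr)$ which is smooth, and $\varphi^{(i)}_j\equiv 0$ on a neighbourhood of $\Gamma_i$; the loss of regularity of $\varphi^{(i)}_j$ (the corner of $\min\{t^{\alpha_j},1\}$ at $t=1$ and the cut locus of $\Gamma_i$) occurs only where $d_i$ is bounded away from $0$, hence only where $\varphi^{(i)}_j$ is close to $1$. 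I would therefore fix $\eta_i\in C^\infty_c(W_i)$ with $0\le\eta_i\le 1$ and $\eta_i\equiv1$ on a neighbourhood of $\Gamma_i$, and put
\[
\Tilde\varphi^{(i)}_j:=1-\eta_i\bigl(1-\varphi^{(i)}_j\bigr)\in C^\infty(\RR^n).
\]
Then $0\le\Tilde\varphi^{(i)}_j\le1$, $\Tilde\varphi^{(i)}_j$ vanishes on an open neighbourhood of $\Gamma_i$ and equals $1$ outside $\supp\eta_i$, $\Tilde\varphi^{(i)}_j(x)\to1$ for every $x\notin\Gamma_i$, and — this is the point — for every $F\in H^1(\RR^n)$,
\[
\Tilde\varphi^{(i)}_j F-F=\eta_i\bigl(\varphi^{(i)}_j F-F\bigr)\xrightarrow{j\to\infty}0\quad\text{in }H^1(\RR^n),
\]
since multiplication by the fixed $\eta_i\in C^\infty_c(\RR^n)$ is bounded on $H^1(\RR^n)$ and $\varphi^{(i)}_j F\to F$ by Lemma~\ref{lem-subman}.

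Now set $\rho_j:=\prod_{i=1}^M\Tilde\varphi^{(i)}_j\in C^\infty(\RR^n)$. Clearly $0\le\rho_j\le1$, $\rho_j$ vanishes on the union of the open sets on which the factors $\Tilde\varphi^{(i)}_j$ vanish — an open neighbourhood of $\bigcup_i\Gamma_i\supset\partial_0\Omega$ — and $\rho_j(x)\to1$ for every $x$ outside the Lebesgue-null set $\bigcup_i\Gamma_i$, i.e. for a.e. $x\in\RR^n$. To get $\rho_j F\to F$ in $H^1(\RR^n)$ for every $F\in H^1(\RR^n)$ I would proceed by induction on $M$: each $\Tilde\varphi^{(i)}_j$ lies in $W^{1,\infty}(\RR^n)$, so the multiplication operators $F\mapsto\Tilde\varphi^{(i)}_j F$ on $H^1(\RR^n)$ are uniformly bounded in $j$ by the Banach--Steinhaus theorem (their values on each fixed $F$ converge, hence form a bounded set); writing $\rho_j F=\Tilde\varphi^{(1)}_j G_j$ with $G_j:=\prod_{i=2}^M\Tilde\varphi^{(i)}_j F\to F$ (inductive hypothesis) one estimates
\[
\|\rho_j F-F\|_{H^1(\RR^n)}\le\bigl\|\Tilde\varphi^{(1)}_j(G_j-F)\bigr\|_{H^1(\RR^n)}+\bigl\|\Tilde\varphi^{(1)}_j F-F\bigr\|_{H^1(\RR^n)},
\]
and both terms tend to $0$. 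Finally, given $f\in H^1(\Omega)$, I would take a Sobolev extension $F\in H^1(\RR^n)$ of $f$ (available because a piecewise smooth domain has Lipschitz boundary) and conclude $\|\rho_j f-f\|_{H^1(\Omega)}\le\|\rho_j F-F\|_{H^1(\RR^n)}\to0$.

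The main obstacle is exactly the $C^\infty$-regularity of the cut-offs: those furnished by Lemma~\ref{lem-subman} are only Lipschitz, and mollifying their product at a $j$-dependent scale would require controlling $(\nabla\rho_j-\nabla\Phi_j)f$ in $L^2$ for arbitrary $f\in H^1(\Omega)$, which is feasible only via a Sobolev-embedding/H\"older argument. The device above avoids this by excising, with one fixed smooth localiser $\eta_i$, precisely the region where smoothness fails and where the cut-off is anyway near $1$, so that the $H^1$-approximation property is preserved.
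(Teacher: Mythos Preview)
Your argument is correct, and in fact it is more careful than the paper's own proof in two respects. The paper simply sets $\rho_j:=\prod_{k=1}^K(\psi_{j_k}\circ d_k)$, where for a \emph{given} $f\in H^1(\Omega)$ the indices $j_k\ge j$ are chosen successively so that each multiplication step costs at most $1/(jK)$ in the $H^1(\Omega)$-norm; the triangle inequality then gives $\|\rho_j f-f\|_{H^1(\Omega)}<1/j$. This is a diagonal argument and produces cut-offs that depend on $f$, which is harmless for the subsequent applications (Corollary~\ref{cor-dens1} and the proof of Theorem~\ref{thm2}) but does not literally match the quantifier order in the statement. The paper also does not discuss the $C^\infty$-regularity of $\psi_{j_k}\circ d_k$; since only the $H^1$-multiplier property is ever used afterwards, Lipschitz regularity would suffice there too.

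Your route fixes both points. By localising with the fixed smooth $\eta_i$ you excise exactly the region where $\psi_j\circ d_i$ fails to be $C^\infty$ (the corner of $\min\{t^{\alpha_j},1\}$ at $t=1$ and the cut locus of $\Gamma_i$), at the price of the harmless modification $\Tilde\varphi^{(i)}_j-\varphi^{(i)}_j=(1-\eta_i)(1-\varphi^{(i)}_j)$, which still goes to $0$ pointwise and in the $H^1$-multiplier sense. And by using the \emph{same} index $j$ in every factor, combined with the Banach--Steinhaus bound on the multiplication operators and a short induction on the number of factors, you obtain a single sequence $(\rho_j)$ that works for every $f$ simultaneously. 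The paper's approach is quicker to write and entirely adequate for what is actually needed later; yours delivers exactly what the corollary asserts.
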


\begin{proof}
Let $\Gamma_1,\dots,\Gamma_K$ be compact $(n-2)$-dimensional submanifolds whose union contains $\partial_0\Omega$ and denote by $d_k(x)$ the distance from $x\in\RR^n$ to $\Gamma_k$.

Let $f\in H^1(\Omega)$ and $j\in\NN$.
Denote $h_{0}:=f$, then for each $k\in\{1,\dots,K\}$ successively use Lemma~\ref{lem-subman} 
to choose $j_k\ge j$ such that the function $h_k:=(\psi_{j_k}\circ d_k) h_{k-1}$ satisfies \[
\|h_k-h_{k-1}\|_{H^1(\Omega)}<\frac{1}{jK}.\]
By construction one has 
\begin{gather*}
	h_K= \rho_j f,\qquad
\rho_j:=\prod_{k=1}^K (\psi_{j_k}\circ d_k),\\
\|\rho_j f-f\|_{H^1(\Omega)}\equiv\|h_K-h_0\|_{H^1(\Omega,\CC^N)}
\le \sum_{k=1}^K \|h_k-h_{k-1}\|_{H^1(\Omega,\CC^N)}< K\cdot \dfrac{1}{jK}=\dfrac{1}{j},
\end{gather*}
hence, $\rho_j f\xrightarrow{j\to\infty} f$ in $H^1(\Omega)$. 
The function $\psi_{j_k}\circ d_k$ vanishes in an open neighborhood  of $\Gamma_k$, so $\rho_j$ vanishes in the union of these neighborhoods (which is also an open neighborhood of $\partial_0\Omega$). For $j\to\infty$ one has $j_k\to\infty$ for each $k$, hence, $\psi_{j_k}\circ d_k (x)\to 1$ for $x\notin\Gamma_k$, which implies $\rho_j(x)\xrightarrow{j\to\infty}1$ for any $x\notin \bigcup_{k=1}^K \Gamma_k$. Finally, the required bound $0\le\rho_j\le 1$ follows from $0\le \psi_j\le 1$ (Lemma \ref{lem-cdn}).
\end{proof}

\begin{corollary}\label{cor-dens1}
Let $\Omega\subset\RR^n$ be a bounded convex domain with piecewise smooth boundary and $m\in\RR$, then the Dirac operator $A^\Omega_m$ is essentially self-adjoint on
	\[
		\dom_0 A^\Omega_m:=\big\{f\in \dom A^\Omega_m:\ 
		f=0 \text{ in an open neighborhood of }\partial_0\Omega
		\big\}.
	\]
\end{corollary}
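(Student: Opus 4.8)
The plan is to derive the corollary from the self-adjointness of $A^\Omega_m$ established in Theorem~\ref{thm-selfadj} together with the approximation provided by Corollary~\ref{cor-rhoj}. Since $A^\Omega_m$ is self-adjoint and $\dom_0 A^\Omega_m$ is a linear subspace of $\dom A^\Omega_m$, the restriction $A^\Omega_m|_{\dom_0 A^\Omega_m}$ is symmetric, and it is essentially self-adjoint (with closure $A^\Omega_m$) precisely when $\dom_0 A^\Omega_m$ is a core for $A^\Omega_m$, i.e. dense in $\dom A^\Omega_m$ with respect to the graph norm. As $D_m$ is a first-order differential expression with constant matrix coefficients, convergence of a sequence in $H^1(\Omega,\CC^N)$ entails convergence of its $D_m$-images in $L^2(\Omega,\CC^N)$, hence convergence in the graph norm of $A^\Omega_m$; thus it suffices to show that every $f\in\dom A^\Omega_m$ is the limit in $H^1(\Omega,\CC^N)$ of a sequence in $\dom_0 A^\Omega_m$.

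Given such an $f$, I would take the scalar cut-offs $\rho_j\in C^\infty(\RR^n)$ supplied by Corollary~\ref{cor-rhoj}: they satisfy $0\le\rho_j\le1$, vanish in an open neighbourhood of $\partial_0\Omega$, and $\rho_j g\to g$ in $H^1(\Omega)$ for every $g\in H^1(\Omega)$. Applying the last property to each of the $N$ components of $f$ yields $\rho_j f\to f$ in $H^1(\Omega,\CC^N)$. It then remains to verify that $\rho_j f\in\dom_0 A^\Omega_m$: the product $\rho_j f$ lies in $H^1(\Omega,\CC^N)$ and vanishes in a neighbourhood of $\partial_0\Omega$; moreover, since $\rho_j$ is smooth, the trace of $\rho_j f$ on $\partial\Omega$ equals $\rho_j|_{\partial\Omega}$ times the trace of $f$, so, as $\rho_j$ is a scalar factor, multiplying the boundary identity $f=-\rmi\beta\alpha\cdot\nu f$ by $\rho_j$ gives $\rho_j f=-\rmi\beta\alpha\cdot\nu(\rho_j f)$ on $\partial\Omega$, i.e. the MIT bag boundary condition is preserved.

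Putting these steps together, $\dom_0 A^\Omega_m$ is dense in $\dom A^\Omega_m$ in the graph norm, hence a core, and the essential self-adjointness follows. There is no genuine obstacle here, as all the analytic substance sits in Corollary~\ref{cor-rhoj}; the only point requiring a moment's care is the stability of the MIT bag boundary condition under multiplication by the scalar cut-off $\rho_j$, which works precisely because that condition is linear and $\rho_j$ commutes with the matrix $\beta\alpha\cdot\nu$.
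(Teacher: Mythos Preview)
Your proof is correct and follows essentially the same approach as the paper: both use the cut-offs $\rho_j$ from Corollary~\ref{cor-rhoj}, observe that the MIT bag boundary condition is invariant under multiplication by scalar functions so that $\rho_j f\in\dom_0 A^\Omega_m$, and conclude by passing from $H^1$-convergence to graph-norm convergence. The paper invokes Lemma~\ref{lem-equiv} (equivalence of the two norms) for this last step, whereas you argue directly via the first-order nature of $D_m$, which is the easy direction of that equivalence and suffices here.
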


\begin{proof}
Let $\rho_j$ be	 as in Corollary \ref{cor-rhoj} and $f\in\dom A^\Omega_m$. As the boundary condition for $A^\Omega_m$
is invariant under multiplication by smooth scalar functions, one has $\rho_j f\in \dom_0 A^\Omega_m$ with 
$\rho_j f\xrightarrow{j\to\infty} f$ in $H^1(\Omega,\CC^N)$, and the conclusion follows by Lemma \ref{lem-equiv}.
\end{proof}

\begin{lemma}[Essential self-adjointness on smooth functions]\label{thm-ess}
	Let $\Omega\subset\RR^n$ be a bounded convex domain with piecewise smooth boundary, then for any $m\in\RR$ the Dirac operator $A^\Omega_m$ is essentially self-adjoint on
	\begin{align*}
	\dom_\infty A^\Omega_m:=\big\{f\in \dom A^\Omega_m:\ & f\in C^\infty(\overline\Omega,\CC^N), 
	\ 	f=0 \text{ in an open neighborhood of }\partial_0\Omega
	\big\}.
	\end{align*}
\end{lemma}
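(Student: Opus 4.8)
The plan is to reduce the statement, via Corollary~\ref{cor-dens1}, to a purely smooth-domain approximation result. By Corollary~\ref{cor-dens1} the operator $A^\Omega_m$ is already essentially self-adjoint on $\dom_0 A^\Omega_m$, and by Lemma~\ref{lem-equiv} the graph norm of $A^\Omega_m$ is equivalent to the $H^1$-norm on $\dom A^\Omega_m$ (the proof of Lemma~\ref{lem-equiv} uses only self-adjointness and the fact that $\dom A^\Omega_m$ is the kernel of a bounded operator on $H^1$, so it applies verbatim). Hence it would suffice to show that every $f\in\dom_0 A^\Omega_m$ is the $H^1(\Omega,\CC^N)$-limit of a sequence in the linear subspace $\dom_\infty A^\Omega_m$, since a subspace of $\dom A^\Omega_m$ that is dense in the graph norm is a core for $A^\Omega_m$.

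So first I would localise. For such an $f$ the support $S:=\supp f$ is compact and, by definition of $\dom_0$, disjoint from $\partial_0\Omega$; thus $S\cap\partial\Omega$ is a compact subset of the regular boundary. Using Definition~\ref{def-pcs} I would cover $S\cap\partial\Omega$ by finitely many balls $B_1,\dots,B_M$, in each of which $\Omega$ coincides, together with its outer normal, with a bounded $C^\infty$-smooth domain $\Omega_i$, while the rest of $S$ lies in a ball $B_0$ with $\overline{B_0}\subset\Omega$. Choosing a smooth partition of unity $\theta_0,\dots,\theta_M$ near $S$ subordinate to $\{B_0,\dots,B_M\}$ I would write $f=\sum_{i=0}^M\theta_i f$ and treat the terms separately. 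The term $\theta_0 f$ has compact support in $\Omega$, so mollification yields approximants in $C^\infty_c(\Omega,\CC^N)\subset\dom_\infty A^\Omega_m$. For $i\ge 1$, extending $\theta_i f$ by zero gives, by the usual gluing (here one uses that $\theta_i f$ vanishes near $\partial B_i$ and that $\Omega\cap B_i=\Omega_i\cap B_i$), an element of $\dom A^{\Omega_i}_m$; once it can be approximated by functions smooth up to the boundary and satisfying the boundary condition on the \emph{smooth} domain $\Omega_i$, I would multiply by a fixed cut-off $\eta_i\in C^\infty_c(B_i)$ equal to $1$ near $\supp\theta_i f$ — this preserves smoothness, preserves the boundary condition (invariance under multiplication by a scalar function) and keeps the support away from the closed set $\partial_0\Omega$ — and transplant back to $\overline\Omega$ via $\Omega\cap B_i=\Omega_i\cap B_i$, extending by zero, to obtain approximants in $\dom_\infty A^\Omega_m$ converging to $\theta_i f$ in $H^1(\Omega,\CC^N)$. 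Summing over $i$ then concludes.

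The one genuinely new ingredient, and the step I expect to be the main obstacle, is therefore the smooth-domain statement: for a bounded $C^\infty$-smooth domain $\Omega_i$, the set $C^\infty(\overline{\Omega_i},\CC^N)\cap\dom A^{\Omega_i}_m$ is dense in $\dom A^{\Omega_i}_m$ for the $H^1$-norm (equivalently the graph norm, by the analogue of Lemma~\ref{lem-equiv} together with the self-adjointness from Lemma~\ref{lem2}). I would obtain this from elliptic regularity up to the boundary: the MIT bag condition is a regular elliptic (Shapiro--Lopatinski) boundary condition for the first-order elliptic system $D_m$, so the resolvent $(A^{\Omega_i}_m+\rmi)^{-1}$, which exists by Lemma~\ref{lem2}, maps $C^\infty(\overline{\Omega_i},\CC^N)$ into itself; approximating the datum $(A^{\Omega_i}_m+\rmi)g$ of a given $g\in\dom A^{\Omega_i}_m$ by smooth functions in $L^2$ and applying the resolvent then yields smooth approximants of $g$ in the graph norm. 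Alternatively, this density can be extracted from the general theory of boundary value problems for Dirac-type operators on smooth manifolds with boundary, e.g.\ \cite{baer,baer2}. The remaining pieces — the partition of unity, the cut-offs, the zero-extensions and gluings — are routine and rely on Corollary~\ref{cor-dens1} to dispose of the singular boundary from the outset.
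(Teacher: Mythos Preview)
Your argument is correct, but it follows a genuinely different route from the paper's.

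The paper does not localise with a partition of unity nor invoke elliptic regularity for the Dirac system. Instead, given $f\in\dom A^\Omega_m$ and $\eps>0$, it first uses Corollary~\ref{cor-dens1} to pass to $h\in\dom_0 A^\Omega_m$ with $\|h-f\|_{H^1}<\eps/3$; then it ``rounds the corners'' once via Lemma~\ref{lem-corner} to obtain a single $C^\infty$-smooth domain $\Omega_0$ coinciding with $\Omega$ away from a small neighbourhood $V_0$ of $\partial_0\Omega$; mollifies $h$ to a smooth $g$ with $\|g-h\|_{H^1}<\eps/3$ (still vanishing near $\partial_0\Omega$); and finally corrects the boundary condition by setting $v:=g-\cE(1+\rmi\beta\alpha\cdot\nu)(g|_{\partial\Omega})$, where $\cE$ is built from the \emph{harmonic} Poisson operator on $\Omega_0$ (cut off near $V_0$). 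The key fact used is the classical regularity of the Dirichlet Laplacian: the Poisson operator maps $C^\infty(\partial\Omega_0)$ to $C^\infty(\overline{\Omega_0})$. This keeps the proof self-contained and avoids any Shapiro--Lopatinski theory or resolvent regularity for first-order systems.

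Your approach, by contrast, black-boxes the smooth-domain density as a consequence of elliptic regularity (the resolvent of $A^{\Omega_i}_m$ preserves $C^\infty(\overline{\Omega_i})$), which is legitimate and available from \cite{baer,baer2}, and then patches the local approximants together. This is more modular and perhaps conceptually cleaner, at the cost of importing heavier machinery; the paper's approach trades that for an explicit ``approximate-then-correct'' construction using only the Laplacian.
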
	

\begin{proof}
Let $f\in \dom A^\Omega_m$ and $\eps>0$. By Corollary \ref{cor-dens1}
one can find a function $h\in \dom A^\Omega_m$
which vanishes an open neighborhood $U$ of $\partial_0\Omega$
and satisfies $\|h-f\|_{H^1(\Omega,\CC^N)}<\frac{\eps}{3}$.
Choose bounded open neighborhoods $V_0$ and $V$ of $\partial_0\Omega$ such that $\overline{V_0}\subset V$ and $\overline{V}\subset U$, then pick
a function $\chi\in C^\infty_c(\RR^n)$ with $0\le\chi\le 1$ such that $\supp\chi\subset V$ and $\chi=1$ in $V_0$. The map
\[
H^{\frac{1}{2}}(\partial\Omega,\CC^N)\ni\varphi\mapsto(1+\rmi\beta\alpha\cdot\nu)(1-\chi)\varphi\in H^{\frac{1}{2}}(\partial\Omega,\CC^N)
\]
is a bounded linear operator (being a multiplication by a smooth matrix function compactly supported on the regular boundary), and we denote its norm by $C_0$. Furthermore, it maps the subspace
\[
\big\{\varphi\in H^{\frac{1}{2}}(\partial \Omega,\CC^N):\  \supp\varphi\subset \partial_\infty \Omega,\ \varphi\in C^\infty(\partial_\infty\Omega,\CC^N)
\big\}
\]
in itself and, in addition,  $(1+\rmi\beta\alpha\cdot\nu)(1-\chi)\varphi=(1+\rmi\beta\alpha\cdot\nu)\varphi$
for $\supp\varphi\subset\partial\Omega\setminus V_0$.

Now let $\Omega_0$ be a bounded $C^\infty$-smooth domain which coincides with $\Omega$ outside $V_0$ (see Lemma~\ref{lem-corner}).
For any $\varphi\in H^\frac{1}{2}(\partial\Omega_0)$ there is a unique  $F\in H^1(\Omega_0)$ with 
$\Delta F=0 $ in $\Omega_0$ and $F=\varphi$  on $\partial\Omega_0$,
and the associated Poisson operator $\cE_0: H^\frac{1}{2}(\partial\Omega_0)\ni\varphi\mapsto F\in H^1(\Omega_0)$
is bounded with $\cE_0\big(C^\infty(\partial\Omega_0)\big)\subset C^\infty(\overline{\Omega_0})$,
see~\cite[Theorem~11.14]{grubb}. Remark that for any function $\varphi$ defined on $\partial \Omega$ we can consider $(1-\chi)\varphi$ as a function defined on $\partial \Omega_0$ and vanishing on $V_0\cap \partial\Omega_0$. Similarly, for any function $v_0$ on $\Omega_0$
one can consider $(1-\chi)v_0$ as a function on $\Omega$ which vanishes in $V_0\cap \Omega$.
With these identifications we conclude that
\[
\cE:\ H^{\frac{1}{2}}(\partial\Omega,\CC^N)\ni \varphi \mapsto (1-\chi)(\cE_0\otimes I_N) \big((1-\chi)\varphi\big)\in H^1(\Omega,\CC^N)
\]
is a bounded operator, and we denote by $C_1$ its norm. Moreover, it maps $C^\infty(\partial\Omega)$ to $C^\infty(\overline{\Omega})$ by construction, and 
$\cE\varphi=\varphi$ on $\partial \Omega$ for any $\varphi\in C^\infty(\partial\Omega)$ with $\varphi=0$ on $V\cap \partial\Omega$.

Let $\Tilde h\in H^1(\RR^n,\CC^N)$ be an arbitrary continuation of the above $h$. Consider its convolutions $\Tilde h_\eps:=\Tilde h \ast \rho_\eps$
with standard mollifiers $\rho_\eps$, and let $h_\eps$ be their restrictions to $\Omega$, then $h_\eps\in C^\infty(\overline{\Omega},\CC^N)$. Having in mind that $h=0$ in $U$ and the support of $h_\eps$ is contained in the $\eps$-neighborhood
of the support of $h$ we conclude that we can choose $\eps$ sufficiently small such that:
the function $g:=h_\eps\in C^\infty(\overline{\Omega},\CC^N)$ satisfies $g=0$ on $V\cap \Omega$ with
\[
\|g-h\|_{H^1(\Omega,\CC^N)}<\dfrac{\eps}{3},\qquad
\|g-h\|_{H^\frac{1}{2}(\partial \Omega)}<\dfrac{\eps}{3C_0 C_1}.
\]
Consider the function $v:=g-\cE (1+\rmi\beta\alpha\cdot\nu)(g|_{\partial\Omega})$. Due to the above considerations we have $v\in C^\infty(\overline{\Omega},\CC^N)$ with $v=0$ in $V_0$,
and $(1+\rmi\beta\alpha\cdot\nu)(v|_{\partial\Omega})=0$, i.e. $v\in \dom_\infty A$.
In addition, using $(1+\rmi\beta\alpha\cdot\nu)(h|_{\partial\Omega})=0$ we obtain
\begin{multline*}
	\|v-g\|_{H^1(\Omega,\CC^N)}=\Big\|\cE (1+\rmi\beta\alpha\cdot\nu)(g|_{\partial\Omega})\Big\|_{H^1(\Omega,\CC^N)}\le C_1 \Big\| (1+\rmi\beta\alpha\cdot\nu)(g|_{\partial\Omega})\Big\|_{H^{\frac{1}{2}}(\partial\Omega,\CC^N)}\\
	=C_1 \Big\| (1+\rmi\beta\alpha\cdot\nu)(1-\chi)(g|_{\partial\Omega}-h|_{\partial\Omega})\Big\|_{H^{\frac{1}{2}}(\partial\Omega,\CC^N)}\le C_1 C_0 \|g-h\|_{H^{\frac{1}{2}}(\partial\Omega,\CC^N)}<\dfrac{\eps}{3},
\end{multline*}	
therefore,
\begin{align*}
\|v-f\|_{H^1(\Omega,\CC^N)}&\le \|v-g\|_{H^1(\Omega,\CC^N)}+\|g-h\|_{H^1(\Omega,\CC^N)}+\|h-f\|_{H^1(\Omega,\CC^N)}<\dfrac{\eps}{3}+\dfrac{\eps}{3}+\dfrac{\eps}{3}=\eps,
\end{align*}
and the claims follows due to the arbitrariness of $\eps$.
\end{proof}

\section{Proof of Theorem~\ref{thm2} (quadratic form)}\label{proof-thm2}

As a simple application of the preceding approximations we show that the formulas for the quadratic forms of $A_m^2$
and $B_{m,M}^2$ previously obtained  for smooth $\Omega$ are also valid for convex piecewise smooth $\Omega$.
The following assertion completes the proof of Theorem~\ref{thm2}:

\begin{lemma}
	Let $\Omega\subset\RR^n$ be a bounded convex domain with piecewise smooth boundary, then
	for any $f\in \dom A^\Omega_m$ one has
	\[
	\|A^\Omega_m f\|^2_{L^2(\Omega,\CC^N)}=\int_\Omega \Big( |\nabla f|^2 +m^2|f|^2\big)\,\dd x+\int_{\partial\Omega}\Big(m+\dfrac{H}{2}\Big) |f|^2\dd \cH^{n-1},
	\]
	where $H$ is the mean curvature on $\partial_\infty\Omega$.	
\end{lemma}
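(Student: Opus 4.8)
\emph{Overall strategy.} The plan is to transfer the identity from the smooth case (Lemma~\ref{lem2}) to convex piecewise smooth $\Omega$ in two stages. First I would prove it for functions vanishing near the singular boundary, where $\Omega$ may be replaced by a genuinely smooth domain without altering any of the quantities involved; then I would pass to an arbitrary $f\in\dom A^\Omega_m$ using the density provided by Corollary~\ref{cor-rhoj}, the only delicate point being the behaviour of the boundary integral when the mean curvature of $\partial_\infty\Omega$ is unbounded.

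\emph{Step 1: functions vanishing near $\partial_0\Omega$.} Let $f\in\dom_0 A^\Omega_m$ (see Corollary~\ref{cor-dens1}), say $f=0$ on an open neighbourhood $W$ of $\partial_0\Omega$, and choose an open $W'$ with $\partial_0\Omega\subset W'\subset\overline{W'}\subset W$. By Lemma~\ref{lem-corner} there is a bounded $C^\infty$-smooth domain $\Omega_0$ coinciding with $\Omega$ outside $W'$. The continuation $\Tilde f$ of $f$ by zero to $\Omega_0$ then belongs to $\dom A^{\Omega_0}_m$: a routine gluing argument (using that $\Tilde f$ vanishes on the open set $W\cap\Omega_0$ and equals $f$ on $\Omega_0\setminus\overline{W'}=\Omega\setminus\overline{W'}$) gives $\Tilde f\in H^1(\Omega_0,\CC^N)$, while the MIT bag condition holds on $\partial\Omega_0$ trivially on the part lying in $W$, where the trace of $\Tilde f$ vanishes, and by inheritance from $f$ on $\partial\Omega_0\setminus\overline{W'}=\partial\Omega\setminus\overline{W'}$, where the outer normals of $\Omega_0$ and $\Omega$ agree. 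Lemma~\ref{lem2} thus applies to $\Omega_0$ and $\Tilde f$. Since $\Tilde f$, $\nabla\Tilde f$ and $D_m\Tilde f$ vanish on $W\cap\Omega_0$ and agree with $f$, $\nabla f$, $D_m f$ on $\Omega\setminus\overline{W'}$ (where $f$, $\nabla f$, $D_m f$ themselves vanish on $W\cap\Omega$), all volume integrals over $\Omega_0$ coincide with those over $\Omega$; the boundary integral over $\partial\Omega_0$ reduces to one over $\partial\Omega_0\setminus\overline{W'}=\partial\Omega\setminus\overline{W'}\subset\partial_\infty\Omega$, where the mean curvature of $\partial\Omega_0$ equals $H$, and, $\partial_0\Omega$ being $\cH^{n-1}$-null, this equals the integral over $\partial\Omega$. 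Hence the asserted identity holds for every $f\in\dom_0 A^\Omega_m$ (and equally, if one prefers, for $f\in\dom_\infty A^\Omega_m$ of Lemma~\ref{thm-ess}).

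\emph{Step 2: volume and unweighted boundary terms for general $f$.} Given $f\in\dom A^\Omega_m$, take the functions $\rho_j\in C^\infty(\RR^n)$ of Corollary~\ref{cor-rhoj}; since the boundary condition is preserved under multiplication by a smooth scalar function, $\rho_j f\in\dom_0 A^\Omega_m$ and $\rho_j f\to f$ in $H^1(\Omega,\CC^N)$. As the graph norm of $A^\Omega_m$ is equivalent to the $H^1$-norm on its domain (Lemma~\ref{lem-equiv}, applicable because $A^\Omega_m$ is a bounded symmetric perturbation of $A^\Omega_0$), this gives $\|A^\Omega_m(\rho_j f)\|^2\to\|A^\Omega_m f\|^2$; the $H^1$-convergence gives $\int_\Omega(|\nabla(\rho_j f)|^2+m^2|\rho_j f|^2)\,\dd x\to\int_\Omega(|\nabla f|^2+m^2|f|^2)\,\dd x$ and, via continuity of the trace $H^1(\Omega)\to L^2(\partial\Omega)$, $\int_{\partial\Omega}|\rho_j f|^2\,\dd\cH^{n-1}\to\int_{\partial\Omega}|f|^2\,\dd\cH^{n-1}$. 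Writing the identity of Step~1 for $\rho_j f$ and solving for the weighted term, $\frac{1}{2}\int_{\partial\Omega}H|\rho_j f|^2\,\dd\cH^{n-1}$ converges to the finite number $\ell:=\|A^\Omega_m f\|^2-\int_\Omega(|\nabla f|^2+m^2|f|^2)\,\dd x-m\int_{\partial\Omega}|f|^2\,\dd\cH^{n-1}$, and it remains to identify $\ell$ with $\frac{1}{2}\int_{\partial\Omega}H|f|^2\,\dd\cH^{n-1}$.

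\emph{Step 3: the weighted boundary term (the main obstacle).} This is where convexity is genuinely used. By construction $0\le\rho_j\le1$, and, as established in the proof of Corollary~\ref{cor-rhoj}, $\rho_j\to1$ pointwise outside a finite union of $(n-2)$-dimensional submanifolds, hence $\cH^{n-1}$-a.e.\ on $\partial\Omega$; consequently $\rho_j^2|f|^2\to|f|^2$ $\cH^{n-1}$-a.e.\ on $\partial\Omega$, with $0\le\rho_j^2|f|^2\le|f|^2$ there. Since $\Omega$ is convex, $H\ge0$ on $\partial_\infty\Omega$, so Fatou's lemma yields $\frac{1}{2}\int_{\partial\Omega}H|f|^2\,\dd\cH^{n-1}\le\liminf_{j\to\infty}\frac{1}{2}\int_{\partial\Omega}H|\rho_j f|^2\,\dd\cH^{n-1}=\ell$, while the pointwise bound $H\rho_j^2|f|^2\le H|f|^2$ gives $\frac{1}{2}\int_{\partial\Omega}H|\rho_j f|^2\,\dd\cH^{n-1}\le\frac{1}{2}\int_{\partial\Omega}H|f|^2\,\dd\cH^{n-1}$ for every $j$, whence $\ell\le\frac{1}{2}\int_{\partial\Omega}H|f|^2\,\dd\cH^{n-1}$. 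Therefore $\ell=\frac{1}{2}\int_{\partial\Omega}H|f|^2\,\dd\cH^{n-1}$ (in particular this integral is finite), and plugging this back into the definition of $\ell$ gives the claim. The whole difficulty of the argument sits in this last step: without a hypothesis on $H$ one cannot pass to the limit in $\int_{\partial\Omega}H|\rho_j f|^2\,\dd\cH^{n-1}$ by mere trace convergence, and it is precisely the sign $H\ge0$ furnished by convexity, together with the uniform bound $0\le\rho_j\le1$, that makes the Fatou/domination sandwich close.
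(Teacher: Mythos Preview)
Your proof is correct and follows essentially the same route as the paper's: first establish the identity on $\dom_0 A^\Omega_m$ by transplanting to a smooth domain via Lemma~\ref{lem-corner}, then pass to general $f$ using the cut-offs $\rho_j$ of Corollary~\ref{cor-rhoj}, handling the possibly unbounded curvature term through the Fatou/domination sandwich enabled by $H\ge 0$ and $0\le\rho_j\le 1$. The paper compresses your Step~3 into a single sentence (``Fatou's lemma yields\ldots''), but the underlying argument is exactly the two-sided estimate you spell out; your version is simply more explicit about why the limit $\ell$ coincides with $\tfrac{1}{2}\int_{\partial\Omega}H|f|^2\,\dd\cH^{n-1}$ and why the latter is finite.
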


\begin{proof}
	Let $f\in \dom_0 A^\Omega_m$ (see Lemma~\ref{thm-ess}) vanish in an open neighborhood  $V_0$ of $\partial_0\Omega$. By Lemma~\ref{lem-corner} one can find a bounded $C^\infty$-smooth domain $\Omega_0$ coinciding with $\Omega$ outside $V_0$. Define $f_0: \Omega_0\to\CC^N$ by
	$f_0:=f$ in $\Omega_0\setminus V_0\equiv \Omega\setminus V_0$ and $f_0:=0$ in $\Omega_0\cap V_0$,
	then $f_0\in \dom A^{\Omega_0}_m$ with
	\begin{gather*}
		\nabla f_0=\nabla f \text{ and }
	A^{\Omega_0}_m f_0=A^{\Omega}_m f \text{ in }\Omega_0\setminus V_0\equiv \Omega\setminus V_0,\\
	\nabla f_0=0 \text{ and }A^{\Omega_0}_m f_0=0 \text{ in }\Omega_0\cap V_0,
	\qquad
	\nabla f=0 \text{ and }
	A^{\Omega}_m f=0 \text{ in }\Omega\cap V_0,\\
	f_0=f \text{ on }\partial\Omega_0\setminus V_0\equiv \partial\Omega\setminus V_0,
	\quad
	f_0=0 \text{ on }\partial\Omega_0\cap V,\quad f=0 \text{ on }\partial \Omega\cap V.
	\end{gather*}
	In addition, the mean curvature $H_0$ on $\partial\Omega_0$ satisfies
	$H_0=H$ on $\partial\Omega_0\setminus V_0\equiv \partial\Omega\setminus V_0$.
	Hence,
	\begin{align*}
	\|A^\Omega_m f\|^2_{L^2(\Omega,\CC^N)}&=	\|A^\Omega_m f\|^2_{L^2(\Omega\setminus V_0,\CC^N)}=\|A^{\Omega_0}_m f_0\|^2_{L^2(\Omega_0\setminus V_0,\CC^N)}=\|A^{\Omega_0}_m f_0\|^2_{L^2(\Omega_0,\CC^N)}\\
	\text{(use Lemma~\ref{lem2}) }&=
	\int_{\Omega_0} \Big( |\nabla f_0|^2 +m^2|f_0|^2\Big)\,\dd x+\int_{\partial\Omega_0}\Big(m+\dfrac{H_0}{2}\Big) |f_0|^2\,\dd \cH^{n-1}\\
	&=\int_{\Omega_0\setminus V_0} \Big( |\nabla f_0|^2 +m^2|f_0|^2\Big)\,\dd x+\int_{\partial\Omega_0\setminus V_0}\Big(m+\dfrac{H_0}{2}\Big) |f_0|^2\,\dd \cH^{n-1}\\
	&=\int_{\Omega\setminus V_0} \Big( |\nabla f|^2 +m^2|f|^2\Big)\,\dd x+\int_{\partial\Omega\setminus V_0}\Big(m+\dfrac{H}{2}\Big) |f|^2\,\dd \cH^{n-1}\\
	&=\int_\Omega \Big( |\nabla f|^2 +m^2|f|^2\Big)\,\dd x+\int_{\partial\Omega}\Big(m+\dfrac{H}{2}\Big) |f|^2\,\dd \cH^{n-1}.
	\end{align*}	
Hence the required formula holds for all functions in $\dom_0 A^\Omega_m$. 
Remark that $H$ can become unbounded near $\partial_0\Omega$, so some  attention is needed when extending to the whole domain.

Let $f\in \dom A^\Omega_m$ and $\rho_j$ be as in Corollary \ref{cor-dens1},  then for $f_j:=\rho_j f\in \dom_0 A^\Omega_m$ we have
	\begin{align*}
		\|A^\Omega_m f\|^2_{L^2(\Omega,\CC^N)}&=\lim_{j\to\infty}\|A^\Omega_m f_j\|^2_{L^2(\Omega,\CC^N)}\\
		&=\lim_{j\to\infty} \bigg [\int_\Omega \Big( |\nabla f_j|^2 +m^2|f_j|^2\Big)\,\dd x+\int_{\partial\Omega}\Big(m+\dfrac{H}{2}\Big) |f_j|^2\,\dd \cH^{n-1}\bigg].
	\end{align*}
By Corollary \ref{cor-dens1} we have
\begin{gather*}
	\int_\Omega \Big( |\nabla f_j|^2 +m^2|f_j|^2\Big)\,\dd x\xrightarrow{j\to\infty}
	\int_\Omega \Big( |\nabla f|^2 +m^2|f|^2\Big)\,\dd x,\\
	\int_{\partial\Omega}m|f_j|^2 \,\dd \cH^{n-1}
	\xrightarrow{j\to\infty}
	\int_{\partial\Omega}m|f|^2 \,\dd \cH^{n-1}.
\end{gather*}
Furthermore, $|f_j|\le |f|$ with $f_j(x)\xrightarrow{j\to\infty}f(x)$
for a.e. $x$, and Fatou's lemma yields
\[
\int_{\partial\Omega}\dfrac{H}{2} |f_j|^2\,\dd \cH^{n-1}
\xrightarrow{j\to\infty}
\int_{\partial\Omega}\dfrac{H}{2} |f|^2\,\dd \cH^{n-1},
\]	
which concludes the proof.	
\end{proof}

As a preparation for the next section we use similar ideas to show an analogous result for the Dirac operator  $B^\Omega_{m,M}$ on $\RR^n$ defined in \eqref{bmm}.
	\begin{lemma}\label{thm17}
		Let $\Omega\subset\RR^n$ be a bounded convex domain with piecewise smooth boundary and $m,M\in\RR$, then
		for all $f\in \dom B^\Omega_{m,M}$ it holds that
		\begin{multline}
			\| B^\Omega_{m,M} f\|^2_{L^2(\RR^n,\CC^N)}
			=\int_{\Omega} \big(|\nabla f|^2 +m^2|f|^2\big)\dd x
			+ \int_{\Omega^\cc} \big(|\nabla f|^2 +M^2|f|^2\big)\dd x\\
			+(M-m)\int_{\partial\Omega} \big(| \cP_- f|^2-| \cP_+ f|^2\big)\,\dd \cH^{n-1}, \label{bmm-qf}
		\end{multline}
		where $\cP_\pm(s):=\dfrac{1 \mp \rmi\beta\alpha\cdot\nu(s)}{2}$ for $s\in\partial_\infty\Omega$.
	\end{lemma}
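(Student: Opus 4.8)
\emph{Overall strategy.} The plan is to follow the scheme of the proof of Theorem~\ref{thm2}: first establish \eqref{bmm-qf} for $f$ vanishing in a neighborhood of the singular boundary $\partial_0\Omega$, by transporting the problem to a $C^\infty$-smooth domain via Lemma~\ref{lem-corner}, and then remove this restriction by density. Since $\partial_0\Omega$ is contained in finitely many compact $(n-2)$-dimensional submanifolds it is $\cH^{n-1}$-negligible, so $\nu$, and hence $\cP_\pm$, are defined $\cH^{n-1}$-a.e.\ on $\partial\Omega$ and the integrals in \eqref{bmm-qf} make sense.

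\emph{The smooth case.} For a bounded $C^\infty$-smooth domain $\Omega_0$ with outer unit normal $\nu$ I first prove \eqref{bmm-qf} with $\Omega$ replaced by $\Omega_0$, for every $f\in H^1(\RR^n,\CC^N)$. Every term in \eqref{bmm-qf} is continuous for the $H^1(\RR^n,\CC^N)$-norm: $B^{\Omega_0}_{m,M}$ maps $H^1$ boundedly into $L^2$ by \eqref{bmm}, the trace $H^1(\RR^n,\CC^N)\to L^2(\partial\Omega_0,\CC^N)$ is bounded, and the matrices $\cP_\pm$ are pointwise orthogonal projections, hence contractions on $L^2(\partial\Omega_0,\CC^N)$. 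Hence, approximating $f$ in the $H^1$-norm by functions in $C^\infty_c(\RR^n,\CC^N)$, it suffices to treat $f\in C^\infty_c(\RR^n,\CC^N)$. For such $f$, writing $D_M f:=-\rmi\sum_k\alpha_k\partial_k f+M\beta f$, one has $B^{\Omega_0}_{m,M}f=D_m f$ on $\Omega_0$ and $=D_M f$ on $\Omega_0^\cc$, so $\|B^{\Omega_0}_{m,M}f\|^2_{L^2(\RR^n,\CC^N)}=\|D_m f\|^2_{L^2(\Omega_0,\CC^N)}+\|D_M f\|^2_{L^2(\Omega_0^\cc,\CC^N)}$. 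Using $D_\mu^2=(-\Delta+\mu^2)I_N$ together with $\langle D_\mu f,D_\mu f\rangle_U=\langle f,D_\mu^2 f\rangle_U+\rmi\int_{\partial U}\langle f,(\alpha\cdot\nu_U)D_\mu f\rangle\dd\cH^{n-1}$ and the Green identity for $-\Delta$, one gets for a smooth $U$ with outer normal $\nu_U$
\[
\|D_\mu f\|^2_{L^2(U,\CC^N)}=\int_U\big(|\nabla f|^2+\mu^2|f|^2\big)\dd x-\int_{\partial U}\langle f,\partial_{\nu_U}f\rangle\dd\cH^{n-1}+\rmi\int_{\partial U}\langle f,(\alpha\cdot\nu_U)D_\mu f\rangle\dd\cH^{n-1}.
\]
Applying this to $(U,\mu,\nu_U)=(\Omega_0,m,\nu)$ and to $(U,\mu,\nu_U)=(\Omega_0^\cc,M,-\nu)$ and summing: the volume parts combine to the first two integrals of \eqref{bmm-qf}; the $\partial_{\nu}$-terms cancel because $f\in C^1(\RR^n)$ has a single normal derivative along $\partial\Omega_0$; and since $D_m f-D_M f=(m-M)\beta f$ there, the only surviving boundary contribution is $-\rmi(M-m)\int_{\partial\Omega_0}\langle f,(\alpha\cdot\nu)\beta f\rangle\dd\cH^{n-1}$. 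Finally $\cP_\pm=\tfrac12(I_N\mp\rmi\beta\alpha\cdot\nu)$ are complementary orthogonal projections (because $(\rmi\beta\alpha\cdot\nu)^2=I_N$ and $\rmi\beta\alpha\cdot\nu$ is Hermitian) with $\cP_--\cP_+=\rmi\beta\alpha\cdot\nu$, so $|\cP_- f|^2-|\cP_+ f|^2=\langle f,(\cP_--\cP_+)f\rangle=\rmi\langle f,\beta(\alpha\cdot\nu)f\rangle=-\rmi\langle f,(\alpha\cdot\nu)\beta f\rangle$, the last step by $\alpha_k\beta=-\beta\alpha_k$; this is exactly the surviving term, proving \eqref{bmm-qf} for $\Omega_0$.

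\emph{Reduction to the smooth case and density.} Now let $f\in H^1(\RR^n,\CC^N)$ vanish in an open neighborhood $V_0$ of $\partial_0\Omega$, and use Lemma~\ref{lem-corner} to pick a bounded $C^\infty$-smooth $\Omega_0$ coinciding with $\Omega$ outside $V_0$. Then $\one_{\Omega_0}=\one_\Omega$ on $\supp f$, whence $B^{\Omega_0}_{m,M}f=B^{\Omega}_{m,M}f$; moreover $\Omega_0\setminus V_0=\Omega\setminus V_0$, $\Omega_0^\cc\setminus V_0=\Omega^\cc\setminus V_0$ and $\partial\Omega_0\setminus V_0=\partial\Omega\setminus V_0$ with equal outer normals, while $f$, $\nabla f$ and the trace of $f$ on $\partial\Omega$ vanish on $V_0$; hence every term of the $\Omega_0$-identity coincides with the corresponding term for $\Omega$, which gives \eqref{bmm-qf} for this $f$. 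For a general $f\in H^1(\RR^n,\CC^N)$ one takes the cut-offs $\rho_j\in C^\infty(\RR^n)$ built as in the proof of Corollary~\ref{cor-rhoj} (finite products of functions $\psi_{j_k}\circ d_k$, with $d_k$ the distance to the submanifolds covering $\partial_0\Omega$): they satisfy $0\le\rho_j\le1$, $\rho_j=0$ near $\partial_0\Omega$, and $\rho_j f\to f$ in $H^1(\RR^n,\CC^N)$ by Lemma~\ref{lem-subman}. Applying the previous case to $f_j:=\rho_j f$ and letting $j\to\infty$ yields \eqref{bmm-qf} for $f$, since all its terms are $H^1(\RR^n,\CC^N)$-continuous (note that here, contrary to Theorem~\ref{thm2}, the boundary coefficient is the constant $M-m$, so no Fatou-type argument is needed).

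\emph{Main obstacle.} The only delicate part is the boundary bookkeeping in the smooth case: one must verify that the two normal-derivative boundary terms produced by the Green formulas on $\Omega_0$ and $\Omega_0^\cc$ cancel (this uses that $f$ is a genuine $H^1$-function on $\RR^n$, so it has one normal derivative along $\partial\Omega_0$, not merely a pair of one-sided restrictions), and then correctly match the surviving term $-\rmi(M-m)\int_{\partial\Omega_0}\langle f,(\alpha\cdot\nu)\beta f\rangle$ with $(M-m)\int_{\partial\Omega_0}\big(|\cP_- f|^2-|\cP_+ f|^2\big)$ through the projection algebra and the anticommutation $\beta(\alpha\cdot\nu)=-(\alpha\cdot\nu)\beta$, where the signs are easy to get wrong.
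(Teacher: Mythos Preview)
Your proof is correct and follows essentially the same route as the paper: first establish the identity for $f$ vanishing near $\partial_0\Omega$ by transporting to a $C^\infty$-smooth domain via Lemma~\ref{lem-corner}, then extend by density using the cut-offs from Lemma~\ref{lem-subman}/Corollary~\ref{cor-rhoj}. The only difference is that the paper does not rederive the smooth case but simply cites \cite[Lemma~2.3]{mobp}, whereas you supply a self-contained integration-by-parts argument; your observation that the boundary coefficient here is the bounded constant $M-m$ (so plain $H^1$-continuity suffices, with no Fatou step as in Theorem~\ref{thm2}) is exactly what the paper uses implicitly when it says ``extend the formula by density.''
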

	
	\begin{proof}
	Recall that the formula is already proved for smooth $\Omega$ in \cite[Lemma 2.3]{mobp}.
	Assume first that $f\in H^1(\RR^n,\CC^N)$ and vanishes in an open neighbhood $V_0$ of $\partial_0\Omega$.
	By Lemma~\ref{lem-corner} one can find a bounded $C^\infty$-smooth domain $\Omega_0$ which coincides with $\Omega$ outside $V_0$. Let $\nu_0$ be the outer unit normal on $\partial \Omega_0$
	and consider the respective pointwise projectors
	\[
	\cP^0_\pm(s):=\dfrac{1 \mp \rmi\beta\alpha\cdot\nu_0(s)}{2},\quad s\in \partial \Omega_0,
	\]
	then $\cP^0_\pm f=\cP_\pm f$ on $\partial \Omega\setminus V_0$.
	Due to $B^{\Omega}_{m,M}f=B^{\Omega_0}_{m,M}f$ one can use the
	already known formula for the smooth $\Omega_0$ as follows:
	\begin{align*}
		\| B^\Omega_{m,M} f\|^2_{L^2(\RR^n,\CC^N)}&=
		\| B^{\Omega_0}_{m,M} f\|^2_{L^2(\RR^n,\CC^N)}\\
		&=\int_{\Omega_0} \big(|\nabla f|^2 +m^2|f|^2\big)\dd x
		+ \int_{\Omega_0^\cc} \big(|\nabla f|^2 +M^2|f|^2\big)\dd x\\ &\qquad+(M-m)\int_{\partial\Omega_0} \big(| \cP^0_- f|^2- |\cP^0_+ f|^2\big)\dd \cH^{n-1}\\
		&=\int_{\Omega_0\setminus V_0} \big(|\nabla f|^2 +m^2|f|^2\big)\dd x
		+ \int_{\Omega_0^\cc\setminus V_0} \big(|\nabla f|^2 +M^2|f|^2\big)\dd x\\
		&\qquad+(M-m)\int_{\partial\Omega_0\setminus V_0} \big(| \cP^0_- f|^2- |\cP^0_+ f|^2\big)\dd \cH^{n-1}\\
		&=\int_{\Omega\setminus V_0} \big(|\nabla f|^2 +m^2|f|^2\big)\dd x
		+ \int_{\Omega^\cc\setminus V_0} \big(|\nabla f|^2 +M^2|f|^2\big)\dd x\\
		&\qquad+(M-m)\int_{\partial\Omega\setminus V_0} \big(| \cP_- f|^2- |\cP_+ f|^2\big)\dd \cH^{n-1}\\
		&=\int_{\Omega} \big(|\nabla f|^2 +m^2|f|^2\big)\dd x
		+ \int_{\Omega^\cc} \big(|\nabla f|^2 +M^2|f|^2\big)\dd x\\
		&\qquad+(M-m)\int_{\partial\Omega} \big(| \cP_- f|^2-| \cP_+ f|^2\big)\,\dd \cH^{n-1}.
	\end{align*}	
		Hence, the required formula holds for the functions vanishing near $\partial_0\Omega$. By Lemma \ref{lem-cdn} the operator $B^\Omega_{m,M}$ 
		is essentially self-adjoint on the set of all such functions, which allows one to extend the formula by density to the whole of $\dom B^\Omega_{m,M}$.		
	\end{proof}
	
\section{Proof of Theorem~\ref{thm-limit} (infinite mass limit)}\label{proof-thm3}	

In this section the domain $\Omega$ and the mass parameter $m$ in $\Omega$ are fixed, and we abbreviate
\[
A:=A^\Omega_m,
\quad
B_{M}:=B^\Omega_{m,M}.
\]
Recall that $\dom_\infty A$ is defined in Lemma~\ref{thm-ess}.
Le us briefly discuss some geometric ingredients appearing in the constructions below. For $s\in \partial_\infty\Omega$
the shape operator $W(s):T_s\partial_\infty\Omega\to T_s\partial_\infty\Omega$ is defined by
$W(s):=\dd |_s \nu$. Its eigenvalues $\kappa_1(s),\dots,\kappa_{n-1}(s)$ are the principal curvatures of the boundary
at $s$. One often uses the Jacobian (with $I$ being the identity operator and $t>0$)
\begin{align}
	\label{eq-jst}
J(s,t)&:=\det \big(I+ t W(s)\big)\equiv \prod_{j=1}^{n-1}\big(1+t \kappa_j(s)\big)\equiv 1+\sum_{k=1}^{n-1}H_k(s)t^k,\\
\text{with }
H_k(s)&:=\sum_{1\le j_1<\dots<j_k\le n-1} \kappa_{j_1}(s)\cdot \ldots \cdot \kappa_{j_k}(s),\quad k\in\{1,\dots,n-1\}.\nonumber
\end{align}
In particular, $H_1=\kappa_1+\dots+\kappa_{n-1}\equiv H$.

\subsection{Upper bound}

Let $j\in\NN$ and denote $E:=E_j(A^2)$. Let $\eps>0$, then by Lemma \ref{thm-ess} and min-max principle one can find a $j$-dimensional subspace $F\subset \dom_\infty A$ such that
\begin{equation}
	\label{eeps}
\max_{f\in F\setminus\{0\}}\dfrac{\|A f\|^2_{L^2(\Omega,\CC^N)}}{\|f\|^2_{L^2(\Omega,\CC^N)}}\le E+\eps.
\end{equation}
Due to convexity the map $S:\Omega^\cc\to \partial\Omega$ given by the rule $|x-S(x)|\le |x-y|$ for any $y\in \partial\Omega$ is a well-defined Lipschitz function.
For any $f\in F$ denote by $\Tilde f$ its extension to $\RR^n$ by
\[
\Tilde f(x):=f\big(S(x)\big)e^{-M d(x)},\quad
\quad
d(x):=\dist(x,\partial\Omega),
\quad 
x\in  \Omega^\cc,
\]
and consider the $j$-dimensional subspace
$\Tilde F:=\{\Tilde f: f\in F\}\subset H^1(\RR^n,\CC^N)$.
Our goal is to show that for some  $c>0$ and
all sufficiently large $M>0$ one has
\begin{equation}
	\label{upp1}
\max_{\Tilde f\in \Tilde F\setminus\{0\}}\dfrac{\|B_{M} \Tilde f\|^2_{L^2(\RR^n,\CC^N)}}{\|\Tilde f\|^{2\mathstrut}_{L^2(\RR^n,\CC^N)}}\le E+\eps +\dfrac{c}{M},
\end{equation}
then the min-max principle implies first
$\limsup\limits_{M\to+\infty} E_j(B_M^2)\le E+\eps$,
and due to the arbitrariness of $\eps$ one obtains the upper bound
\begin{equation}
	\label{eq-upp}
\limsup_{M\to+\infty} E_j(B_{M}^2)\le E\equiv E_j(A^2).
\end{equation}

Now let us show \eqref{upp1}. For any $\Tilde f\in\Tilde F$
one has $\cP_+\Tilde f=\Tilde f$ and $\cP_-\Tilde f=0$, hence, with $f:=\Tilde f|_{\Omega}\in F\subset \dom_\infty A$ 
one can rewrite the expression of Lemma~\ref{thm17} in the form
\begin{equation}
	\label{qform3}
	\begin{aligned}	
		\|B_{M} \Tilde f\|^2_{L^2(\RR^n,\CC^N)}&\equiv \int_{\Omega} \big(|\nabla f|^2 +m^2|f|^2\big)\dd x
		+\int_{\partial\Omega} \Big(m+\dfrac{H_1}{2}\Big)|f|^2\dd \cH^{n-1}\\
		&\quad+\int_{\Omega^\cc} \big(|\nabla \Tilde f|^2 +M^2|u|^2\big)\dd x -\int_{\partial\Omega} \Big(M+\dfrac{H_1}{2}\Big)|\Tilde f|^2\dd \cH^{n-1}\\
		&\equiv \|A f\|^2_{L^2(\Omega,\CC^N)}+R_M(\Tilde f),\\
		R_M(\Tilde f)&:=\int_{\Omega^\cc} \big(|\nabla \Tilde f|^2 +M^2|\Tilde f|^2\big)\dd x -\int_{\partial\Omega} \Big(M+\dfrac{H_1}{2}\Big)|\Tilde f|^2\dd \cH^{n-1}.
	\end{aligned}
\end{equation}

Let us derive an upper bound for $R_M(\Tilde f)$ using local coordinates. Remark first that one can find
a compact subset $K\subset \partial_\infty\Omega$ with $\supp \Tilde f|_{\partial\Omega}\subset K$ for all $\Tilde f\in \Tilde F$, and then $\Tilde f(x)=0$ for all $x\in\Omega^\cc$ with $S(x)\notin K$. We further choose open subsets $V_1,\dots,V_P\subset\partial_\infty\Omega$ such that
$K\subset V_1\cup\ldots\cup V_P$ and each $V_p$ is covered by a local chart
$\varphi_p: \RR^{n-1}\supset U_p\to V_p$,
and we pick $\chi_p\in C^\infty_c(V_p)$ such that $\chi_1+\dots+\chi_P=1$ on $K$.
Then $\chi_1\circ S+\dots+\chi_P\circ S=1$ on $\supp \Tilde f\cap \Omega^\cc$ for all $\Tilde f\in\Tilde F$.
Furthermore, the maps
\[
\Phi_p:\ U_p\times(0,\infty)\ni(s,t)\mapsto \varphi_p(s)+t\nu\big(\varphi_p(s)\big)\in\Omega^\cc
\]
are diffeomorphisms satisfying the identities $d\big(\Phi_p(s,t)\big)=t$ and $S\big(\Phi_p(s,t)\big)=\varphi_p(s)$.

Let $\Tilde f\in\Tilde F$ and denote $f_p:=\Tilde f\circ \Phi_p$, then $f_p(s,t)=f\big(\varphi_p(s)\big)e^{-Mt}$ for all $(s,t)$, and the partition of unity $\chi_p\circ S$ and a standard change of variables yield:
\begin{align*}
	R_M&(\Tilde f)=\sum_{p=1}^P \Bigg[\int_{\Omega^\cc} (\chi_p\circ S) \big(|\nabla \Tilde f|^2 +M^2|\Tilde f|^2\big)\dd x -\int_{\partial\Omega} \chi_p\Big(M+\dfrac{H_1}{2}\Big)|\Tilde f|^2\,\dd \cH^{n-1}\Bigg]\\
	&=\sum_{p=1}^P \int_{U_p} \chi_p\big(\varphi_p(s)\big) \Bigg[ \int_0^\infty\Big(\langle \nabla f_p(s,t), G_p(s,t)^{-1} \nabla f_p(s,t)\rangle +M^2\big|f_p(s,t)\big|^2\Big) J_p(s,t)\,\dd t\\
	&\qquad\qquad-\Big(M+\dfrac{H_1\big(\varphi_p(s)\big)}{2}\Big)\big|f_p(s,0)\big|^2\Bigg]\,\sqrt{\det g_p(s)}\,\dd s,
\end{align*}
with matrices
\begin{align*}
G_p(s,t)&:=\begin{pmatrix}
	\bigg(\Big\langle \big(1+t W_p(s)\big)\partial_j\varphi(s), \big(1+t W_p(s)\big)\partial_k\varphi(s)\Big\rangle\bigg)_{j,k=1}^{n-1} &0\\
	0 & 1
\end{pmatrix},\\
g_p(s)&:=\begin{pmatrix}
	\big\langle \partial_j\varphi(s), \partial_k\varphi(s)\big\rangle
\end{pmatrix}_{j,k=1}^{n-1},\quad
W_p:=W\circ\varphi_p.
\end{align*}
and Jacobians $J_p(s,t):=J\big(\varphi_p(s),t\big)$, see \eqref{eq-jst}.
As $W_p(s)$ is self-adjoint and its eigenvalues (the principal curvatures) are non-negative due to convexity, the min-max principle
implies
$G_p(s,t)\ge G_p(s,0)\equiv g_p(s)\oplus 1$ resulting in
\[
G_p(s,t)^{-1}\le G_p(s,0)^{-1}\equiv\begin{pmatrix}
	g_p(s)^{-1} & 0\\
	0 & 1
\end{pmatrix}.
\]
Hence,
\begin{align*}
	R_M(\Tilde f)&\le \sum_{p=1}^P \int_{U_p} \chi_p\big(\varphi_p(s)\big) \Bigg[ \int_0^\infty\Big( \big\langle \nabla_s f_p(s,t),g_p^{-1}(s)\nabla_s f_p(s,t)\big\rangle +\big|\partial_t f_p(s,t)\big|^2\\
	&\quad +M^2\big|f_p(s,t)\big|^2\Big) J_p(s,t)\,\dd t
	-\Big(M+\dfrac{H_1\big(\varphi_p(s)\big)}{2}\Big)\big|f_p(s,0)\big|^2\Bigg]\sqrt{\det g_p(s)}\,\dd s\\
	&=\sum_{p=1}^P \int_{U_p} \chi_p\big(\varphi_p(s)\big) \Bigg[ \int_0^\infty\Big( \big\langle \nabla_s f\big(\varphi_p(s)\big),g_p^{-1}(s)\nabla_s f\big(\varphi_p(s)\big)\big\rangle e^{-2Mt}\\ &\quad +2M^2\big|f\big(\varphi_p(s)\big)\big|^2e^{-2Mt}\Big) J_p(s,t)\dd t
	-\Big(M+\dfrac{H_1\big(\varphi_p(s)\big)}{2}\Big)\big|f\big(\varphi_p(s)\big)\big|^2\Bigg]\\
	&\qquad \times \sqrt{\det g_p(s)}\,\dd s.
\end{align*}
A simple direct computation shows that
\[
\int_0^\infty e^{-2Mt}\dd t=\dfrac{1}{2M},\quad
\int_0^\infty J_p(s,t)e^{-2Mt}\dd t=\dfrac{1}{2M}+\sum_{k=1}^{n-1} \dfrac{k! H_k\big(\varphi_p(s)\big)}{(2M)^{k+1}},
\]
and there is a $C_p>0$ such that for all $s$ with $\varphi_p(s)\in \supp \chi_p$ and sufficiently large $M$
one has
\[
\int_0^\infty J_p(s,t)e^{-2Mt}\dd t
\le \dfrac{1}{2M^2}\Big( M+\dfrac{H_1\big(\varphi_p(s)\big)}{2}\Big) +\dfrac{C_p}{M^3},
\]
which results in
\begin{multline*}
	R_M(\Tilde f)\le \sum_{p=1}^P \int_{U_p} \chi_p\big(\varphi_p(s)\big) \bigg[ \dfrac{1}{2M}\Big\langle \nabla_s f\big(\varphi_p(s)\big),g_p^{-1}(s)\nabla_s f\big(\varphi_p(s)\big)\Big\rangle \\
	 +\dfrac{2C_p}{M} \big|f\big(\varphi_p(s)\big)\big|^2\bigg]\, \sqrt{\det g_p(s)}\,\dd s
	\le \dfrac{C}{M }\|\Tilde f\|^2_{H^1(\partial\Omega,\CC^N)}
\end{multline*}
for some constant $C>0$ independent of $\Tilde f$ and $M$. In addition, one can find another constant $C_1>0$ with
$\|\Tilde f\|^2_{H^1(\partial\Omega,\CC^N)}\le C_1\|\Tilde f\|^2_{L^2(\RR^n,\CC^N)}$ for all $\Tilde f\in\Tilde F$,
as $\Tilde F$ is finite-dimensional. Hence,
\[
R_M(\Tilde f)\le \dfrac{CC_1}{M}\|\Tilde f\|^2_{L^2(\RR^n,\CC^N)} \text{ for all }\Tilde f \in \Tilde F.
\]
By inserting this estimate and \eqref{eeps} into the representation \eqref{qform3} we obtain for all $\Tilde f\in\Tilde F\setminus\{0\}$:
\begin{align*}
	\dfrac{\|B_{M} \Tilde f\|^2_{L^2(\RR^n,\CC^N)}}{\|\Tilde f\|^{2\mathstrut}_{L^2(\RR^n,\CC^N)}}&\le 
	\dfrac{(E+\eps)\|f\|^2_{L^2(\Omega,\CC^N)}+ \dfrac{CC_1}{M}\|\Tilde f\|^2_{L^2(\RR^n,\CC^N)}}{\|\Tilde f\|^{2\mathstrut}_{L^2(\RR^n,\CC^N)}}
	\le E+\eps +\dfrac{CC_1}{M},
\end{align*}
which shows the sought estimate \eqref{upp1} with $c:=CC_1$.

We make a side remark that the upper bound did not require the boundedness of $H$.

\subsection{Lower bound}

We start with a preliminary estimate:
\begin{lemma}\label{lem-low}
Let the mean curvature $H$ of $\partial_\infty\Omega$ be bounded, then for some  $c>0$ one has
	\[
	R_\gamma(f):=\int_{\Omega^\cc} \big(|\nabla f|^2+\gamma^2 |f|^2\big)\dd x-\int_{\partial\Omega} \Big(\gamma+\dfrac{H}{2}\Big)|f|^2\dd \cH^{n-1}\ge -c\int_{\Omega^\cc}|f|^2\dd x
	\]
	for all $f\in H^1(\Omega^\cc)$ and $\gamma>0$.
\end{lemma}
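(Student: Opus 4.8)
The plan is to rewrite $R_\gamma$ by means of the exterior distance function $d(x):=\dist(x,\overline\Omega)$ and to run a ``completing the square'' argument adapted to the non‑smooth geometry. Since $\Omega$ is convex, $d$ is a convex function on $\RR^n$ with $|\nabla d|=1$ and $\Delta d\ge0$ a.e.\ on $\Omega^\cc$; it is $C^\infty$ near the regular boundary, where in normal coordinates $x=s+t\nu(s)$ one has $\Delta d=\partial_t\log J$ with $J(s,t):=\prod_{j=1}^{n-1}\bigl(1+t\kappa_j(s)\bigr)\ge1$, so that $0\le\Delta d\le H$ there and $\Delta d\to H$ as $t\to0^+$, while above the $(n-2)$‑dimensional singular strata one has instead $\Delta d=\tfrac1d+\lambda$ with $\lambda$ bounded. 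I would first reduce to functions vanishing near $\partial_0\Omega$: using the cut‑offs $\rho_j$ of Corollary~\ref{cor-rhoj} (which apply to $\Omega^\cc$ after an $H^1$‑extension to $\RR^n$, since $\partial_0(\Omega^\cc)=\partial_0\Omega$), and the fact that each term of $R_\gamma$ as well as $\int_{\Omega^\cc}|f|^2$ is continuous on $H^1(\Omega^\cc)$ — here the assumption that $H$ is bounded on $\partial_\infty\Omega$ is used — it suffices to prove the estimate for $f$ vanishing on a neighbourhood of $\partial_0\Omega$, and by density also for $f$ smooth with bounded support.

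For such an $f$ the vector field $Z:=\bigl(\gamma+\tfrac12\Delta d\bigr)\nabla d$ is smooth on a neighbourhood of $\supp f$ and satisfies $Z\cdot\nu=\gamma+\tfrac H2$ on $\partial\Omega$, so the divergence theorem gives $\int_{\partial\Omega}\bigl(\gamma+\tfrac H2\bigr)|f|^2\,\dd\cH^{n-1}=-\int_{\Omega^\cc}\operatorname{div}\bigl(Z|f|^2\bigr)\,\dd x$. Expanding the right‑hand side and completing the square componentwise, $|\nabla f^a|^2+Z\cdot\nabla|f^a|^2=|\nabla f^a+Zf^a|^2-|Z|^2|f^a|^2$, one finds (using $|\nabla d|=1$) that all the $\gamma$‑dependent terms cancel and
\[
R_\gamma(f)=\int_{\Omega^\cc}\sum_{a=1}^N\bigl|\nabla f^a+Zf^a\bigr|^2\,\dd x+\int_{\Omega^\cc}\Bigl(\tfrac14(\Delta d)^2+\tfrac12\,\nabla d\cdot\nabla(\Delta d)\Bigr)|f|^2\,\dd x.
\]

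It then remains to bound the last integrand from below. On the normal tube above $\partial_\infty\Omega$ one has $\tfrac14(\Delta d)^2+\tfrac12\,\partial_t(\Delta d)=(\sqrt J)_{tt}/\sqrt J$, and convexity ($\kappa_j\ge0$) gives $\partial_t(\Delta d)=-\sum_j\bigl(\tfrac{\kappa_j}{1+t\kappa_j}\bigr)^2\ge-(\Delta d)^2$, hence the integrand is $\ge-\tfrac14(\Delta d)^2\ge-\tfrac14H_0^2$ with $H_0:=\sup_{\partial_\infty\Omega}H$. Above a $(n-2)$‑dimensional stratum the integrand equals $-\tfrac1{4d^2}$ up to a term bounded in terms of the (fixed) curvature of that submanifold, and the singular part $-\tfrac1{4d^2}$ is absorbed by the square term: along each outward normal ray the normal component of $\nabla f^a+Zf^a$ is $\partial_r u+\bigl(\gamma+\tfrac1{2r}\bigr)u$ with $r=d$, and the substitution $u=r^{-1/2}w$ turns it into $r^{-1/2}(\partial_r w+\gamma w)$, so that the one‑dimensional Hardy inequality yields $\int_0^\infty\bigl|\partial_r u+(\gamma+\tfrac1{2r})u\bigr|^2r\,\dd r=\int_0^\infty|\partial_r w+\gamma w|^2\,\dd r\ge\tfrac14\int_0^\infty\tfrac{|w|^2}{r^2}\,\dd r=\tfrac14\int_0^\infty\tfrac{|u|^2}{r^2}\,r\,\dd r$ (the boundary term at $r=0$ vanishes since $f$ is smooth). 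Strata of codimension $\ge3$ contribute only a non‑negative integrand. Collecting these estimates gives $R_\gamma(f)\ge-c\int_{\Omega^\cc}|f|^2\,\dd x$ with $c$ depending only on $H_0$, on the curvatures of the finitely many submanifolds covering $\partial_0\Omega$, and on $n$; the general case follows by letting $j\to\infty$.

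The main difficulty is exactly the behaviour near $\partial_0\Omega$: there $\Delta d$ is unbounded, $Z\notin W^{1,1}_{\mathrm{loc}}(\overline{\Omega^\cc})$, and the exterior tubular picture degenerates, so the displayed identity cannot be applied directly. This forces the preliminary cut‑off near $\partial_0\Omega$, and then the delicate point is to check — via the Hardy estimate over the codimension‑$2$ strata — that the resulting lower bound does not deteriorate as $\supp f$ approaches $\partial_0\Omega$, which is where the hypothesis on $H$ is genuinely used. A routine additional ingredient is the density of $\{f\in C^\infty(\overline{\Omega^\cc}):\ \supp f\text{ bounded}\}$ in $H^1(\Omega^\cc)$.
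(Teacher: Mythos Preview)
Your route diverges from the paper's and carries a genuine gap. The claim that ``$Z:=(\gamma+\tfrac12\Delta d)\nabla d$ is smooth on a neighbourhood of $\supp f$'' once $f$ vanishes near $\partial_0\Omega$ is false: cutting off $f$ in a small Euclidean neighbourhood of $\partial_0\Omega$ does \emph{not} keep $\supp f$ away from the wedge regions $\{x\in\Omega^\cc:S(x)\in\partial_0\Omega\}$, which extend to infinity and on which $\Delta d$ has the different formula $\tfrac{k-1}{d}+\dots$ that you yourself invoke two paragraphs later. Across the interfaces between these wedges and the tube over $\partial_\infty\Omega$, $\Delta d$ jumps, so $Z$ is not even continuous there. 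Consequently the displayed identity for $R_\gamma(f)$ is unjustified as written: a rigorous version would require verifying that the interface contributions to the distributional divergence vanish (they do, since $\nabla d$ is tangent to the interfaces, but you never address this), and then carrying out the Hardy estimate on each wedge with the correct Jacobian, while also checking that the assumed decomposition of $\Omega^\cc$ into tubes over smooth strata of all codimensions actually follows from Definition~\ref{def-pcs} (which only says $\partial_0\Omega$ lies in a union of $(n-2)$-submanifolds).

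The paper avoids all of this by a single observation you are missing: since the volume integrand $|\nabla f|^2+\gamma^2|f|^2$ is non-negative and the surface integral sees only $\partial_\infty\Omega$ (as $\partial_0\Omega$ has $\cH^{n-1}$-measure zero), one may simply discard $\Omega^\cc\setminus\Phi(\Pi)$ with $\Pi=\partial_\infty\Omega\times(0,\infty)$ and $\Phi(s,t)=s+t\nu(s)$. On the remaining smooth tube $\Phi(\Pi)$ the paper drops tangential derivatives, substitutes $h=\sqrt{J}\,(f\circ\Phi)$, integrates by parts in $t$, and bounds the resulting potential $\partial_t(\partial_tJ/2J)+(\partial_tJ/2J)^2$ below by $-\tfrac{n-1}{2}\|H\|_\infty^2$. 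No cut-offs near $\partial_0\Omega$, no interface bookkeeping, no Hardy inequality over the wedges are needed; the constant depends only on $\|H\|_\infty$ and $n$. Your completing-the-square computation on the regular tube is essentially the same calculation in different notation, so the simplest fix is to throw away the wedges at the outset rather than trying to control them.
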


\begin{proof}
We adapt some constructions from \cite[Thm.~2]{conical}. Denote $\Pi:=\partial_\infty\Omega\times(0,\infty)$ and remark that the map $\Phi:\Pi\ni (s,t)\mapsto s+t\nu(s)$ is a diffeomorphism between $\Pi$ and $\Phi(\Pi)\subset\Omega^\cc$.
For any $f\in C^\infty_c(\RR^n)$ one has, using the standard change of variables,
\begin{align*}
		R_\gamma(f)&\ge \int_{\Phi(\Pi)} \big(|\nabla f|^2+\gamma^2 |f|^2\big)\dd x-\int_{\partial_\infty\Omega} \Big(\gamma+\dfrac{H}{2}\Big)|f|^2\dd \cH^{n-1}\\
		&=\int_{\partial_\infty\Omega} \int_0^\infty\bigg(\big|(\nabla f) \big(\Phi(s,t)\big)\big|^2+
		\gamma^2\big|f \big(\Phi(s,t)\big)\big|^2\bigg)J(s,t)\,\dd t\,\dd\cH^{n-1}(s)\\
	&\quad	-\gamma \int_{\partial_\infty\Omega} \Big(\gamma+\dfrac{H_1(s)}{2}\Big)\big|f\big(\Phi(s,0)\big)\big|^2\dd \cH^{n-1}(s)
\end{align*}
with $J$ given by \eqref{eq-jst}. Furthermore,
\begin{equation}
	\label{ders}
\big|(\nabla f) \big(\Phi(s,t)\big)\big|^2
\ge
\Big|\big\langle\nu(s),(\nabla f) \big(\Phi(s,t)\big)\big\rangle\Big|^2
=\Big| \dfrac{\partial}{\partial t} f\big(\Phi(s,t)\big)\Big|^2,
\end{equation}
hence,
\begin{equation}
	\label{rg00}
\begin{aligned}
	R_\gamma(f)\ge R'_\gamma(f):=\int_{\partial_\infty\Omega} \Bigg[\int_0^\infty&\Big( \big|\partial_t g(s,t)\big|^2+\gamma^2\big|g(s,t)\big|^2\Big)J(s,t)\,\dd t\\
	&-\Big(\gamma+\dfrac{H_1}{2}\Big) \big|g(s,0)\big|^2\Bigg]\dd \cH^{n-1}(s)
	\quad \text{with}\quad g:=f\circ\Phi.
\end{aligned}	
\end{equation}
We further consider the function
$h:=\sqrt{J} g$, then
\begin{align*}
|\partial_t g|^2&=\Big|\partial_t \dfrac{h}{\sqrt{J}}\Big|^2
=\Big|\dfrac{\partial_t h}{\sqrt{J}}-\dfrac{h\partial_t J}{2J\sqrt{J}}\Big|^2=\dfrac{1}{J}\Big|\partial_t h-\dfrac{\partial_t J}{2J} h\Big|^2\\
&=\dfrac{1}{J}\bigg(
|\partial_t h|^2-\dfrac{\partial_t J}{2J}\partial_t |h|^2
+\Big(\dfrac{\partial_t J}{2J}\Big)^2 |h|^2
\bigg),
\end{align*}
therefore,
\begin{multline*}
	R'_\gamma(f)=\int_{\partial_\infty\Omega} \Bigg[\int_0^\infty\Big( |\partial_t h|^2-\dfrac{\partial_t J}{2J}\partial_t |h|^2
	+\Big(\dfrac{\partial_t J}{2J}\Big)^2 |h|^2+\gamma^2|h|^2\Big)\,\dd t\\
	-\Big(\gamma+\dfrac{H_1(s)}{2}\Big) \big|h(s,0)\big|^2\Bigg]\dd \cH^{n-1}(s).
\end{multline*}
Remark that
\begin{equation}
	\label{jst1}
\begin{aligned}
	\dfrac{\partial_t J(s,t)}{2J(s,t)}&=\dfrac{1}{2}\partial_t\log J(s,t)=\dfrac{1}{2}\partial_t\sum_{j=1}^{n-1}\log\big(1+t\kappa_j(s)\big)=
	\dfrac{1}{2}\sum_{j=1}^{n-1}\dfrac{\kappa_j(s)}{1+t\kappa_j(s)},\\
	\dfrac{\partial_t J(s,t)}{2J(s,t)}&\bigg|_{t=0}=\dfrac{1}{2}H_1(s),
\end{aligned}
\end{equation}
and the integration by parts yields
\begin{align*}
\int_0^\infty\Big(-\dfrac{\partial_t J(s,t)}{2J(s,t)}\partial_t \big|h(s,t)\big|^2\Big)\,\dd t&=\bigg[-\dfrac{\partial_t J(s,t)}{2J(s,t)} \big|h(s,t)\big|^2\bigg]_{t=0}^{t=\infty}+\int_0^\infty\partial_t\Big(\dfrac{\partial_t J(s,t)}{2J(s,t)}\Big)\big|h(s,t)\big|^2\,\dd t\\
&=\dfrac{H_1(s)}{2}\big|h(s,0)\big|^2+\int_0^\infty\partial_t\Big(\dfrac{\partial_t J(s,t)}{2J(s,t)}\Big)\big|h(s,t)\big|^2\,\dd t,
\end{align*}
and the substitution into the preceding expression for $R'_\gamma(f)$ gives
\begin{equation}
	\label{rprime1}
\begin{aligned}
	R'_\gamma(f)&=\int_{\partial_\infty\Omega} \Bigg[\int_0^\infty\Big( \big|\partial_t h(s,t)\big|^2
	+\gamma^2\big|h(s,t)\big|^2\Big)\,\dd t-\gamma \big|h(s,0)\big|^2\Bigg]\dd \cH^{n-1}(s)\\
	&\quad+
	\int_{\partial_\infty\Omega} \int_0^\infty
	\bigg(\partial_t\Big(\dfrac{\partial_t J}{2J}\Big)
	+\Big(\dfrac{\partial_t J}{2J}\Big)^2\bigg) |h|^2 \,\dd t\, \dd \cH^{n-1}.
\end{aligned}
\end{equation}
Using
\begin{align*}
\big|h(s,0)\big|^2&=-\int_0^\infty \partial_t \big|h(s,t)\big|^2\,\dd t=-\int_0^\infty 2\Re \Big(\overline{\partial_t h(s,t)} h(s,t)\Big)\,\dd t\\
&\le \int_0^\infty 2 \big|\partial_t h(s,t)\big|\cdot \big| h(s,t)\big|\,\dd t
\le\int_0^\infty \bigg(\dfrac{1}{\gamma} \big|\partial_t h(s,t)\big|^2+ \gamma \big| h(s,t)\big|^2\bigg)\,\dd t,
\end{align*}
we show that the first integral in \eqref{rprime1} is always non-negative.
To estimate the second integral in \eqref{rprime1} we use \eqref{jst1}:
\begin{gather*}
\partial_t\Big(\dfrac{\partial_t J(s,t)}{2J(s,t)}\Big)=
-\dfrac{1}{2}\sum_{j=1}^{n-1}\dfrac{\kappa_j(s)^2}{\big(1+t\kappa_j(s)\big)^2}\ge -\dfrac{n-1}{2} H(s)^2,\\
\partial_t\Big(\dfrac{\partial_t J}{2J}\Big)
+\Big(\dfrac{\partial_t J}{2J}\Big)^2\ge 
\partial_t\Big(\dfrac{\partial_t J}{2J}\Big)
\ge -c,\qquad c:=\dfrac{n-1}{2}\|H\|_\infty^2.
\end{gather*}
The substitution into \eqref{rprime1} gives
\begin{align*}
R'_\gamma(f)&\ge -c\int_{\partial_\infty\Omega} \int_0^\infty
|h|^2 \,\dd t\, \dd \cH^{n-1}(s)
=-c\int_{\partial_\infty\Omega} \int_0^\infty
\Big|f\big(\Phi(s,t)\big)\Big|^2 J(s,t)\,\dd t\, \dd \cH^{n-1}(s)\\
&=-c \int_{\Phi(\Pi)} |f|^2\,\dd x\ge -c \int_{\Omega^\cc} |f|^2\,\dd x,
\end{align*}
and using \eqref{rg00} we arrive at
\[
R_\gamma(f)\ge -c \int_{\Omega^\cc} |f|^2\,\dd x \text{ for all }f\in C^\infty_c(\RR^n),
\]
which extends by density to all $f\in H^1(\Omega^\cc)$.
\end{proof}

With the lower bound of Lemma \ref{lem-low} at hand, the remaining analysis is very close to the constructions in~\cite[Sec.~5.3]{mobp}.
We use the formula \eqref{bmm-qf} for $\| B_M f\|^2_{L^2(\RR^n,\CC^N)}$ to obtain 
\begin{align*}
	\| B_{M} f\|^2_{L^2(\RR^n,\CC^N)}&=\int_{\Omega} \big(|\nabla f|^2 +m^2|f|^2\big)\dd x\\
	&\quad +\int_{\partial\Omega} \Big(m-\dfrac{1}{\sqrt{M}}+\dfrac{H}{2}\Big)|f|^2\dd \cH^{n-1} + 2(M-m)\int_{\partial\Omega} | \cP_- f|^2\dd \cH^{n-1}\\
	&\quad +\bigg[\int_{\Omega^\cc} \big(|\nabla f|^2 +M^2|f|^2\big)\dd x -\int_{\partial\Omega} \Big(M-\dfrac{1}{\sqrt{M}}+\dfrac{H}{2}\Big)|f|^2\dd \cH^{n-1}\bigg]\\
	&\ge \int_{\Omega} \big(|\nabla f|^2 +m^2|f|^2\big)\dd x+\int_{\partial\Omega} \Big(m-\dfrac{1}{\sqrt{M}}+\dfrac{H}{2}\Big)|f|^2\dd \cH^{n-1}\\
	&\qquad + 2(M-m)\int_{\partial\Omega}| \cP_- f|^2\dd \cH^{n-1}
	 +C_M\int_{\Omega^\cc}|f|^2\dd x,\quad C_M:=2\sqrt{M}-\dfrac{1}{M}-c,
\end{align*}
where the term is the square brackets was estimated from below by Lemma~\ref{lem-low}.
Using the  Neumann decoupling of $\Omega$ and $\Omega^\cc$ along $\partial\Omega$ and the min-max principle
we conclude that for any fixed $j\in\NN$ and all $M>0$ one has
\begin{equation}
	\label{low55}
E_j(B_{M}^2)\ge E_j(K_{M} \oplus C_M I^\cc),
\end{equation}
where $K_{M}$ is the self-adjoint operator with compact resolvent in $L^2(\Omega,\CC^N)$ whose sesquilinear form $k_{M}$  is defined on $\dom k_{M}:=H^1(\Omega,\CC^N)$ by
\begin{multline*}
	k_{M}(f,f)
	=\int_{\Omega} \big(|\nabla f|^2 +m^2|f|^2\big)\dd x\\
	+\int_{\partial\Omega} \Big(m-\dfrac{1}{\sqrt{M}}+\dfrac{H}{2}\Big)|f|^2\dd \cH^{n-1} + 2(M-m)\int_{\partial\Omega} | \cP_- f|^2\dd \cH^{n-1}
\end{multline*}
and $I^\cc$ is the identity operator in $L^2(\Omega^\cc,\CC^N)$. Due to the upper bound \eqref{eq-upp}
the eigenvalue $E_j(B_{M}^2)$ remains bounded for $M\to+\infty$, while $C_M\xrightarrow{M\to+\infty}+\infty$, and
 \eqref{low55} yields
\begin{equation}
	\label{low4}
\liminf_{M\to+\infty} E_j(B_{M}^2)\ge \liminf_{M\to+\infty} E_j(K_{M}).
\end{equation}
For each $f\in H^1(\Omega,\CC^N)$ the function $M\to k_{M}(f,f)$
is monotonically increasing, with
\[
\Big\{f\in \bigcap_{M>0} \dom k_{M}:\ \sup_{M>0} k_{M}(f,f)<\infty
\Big\}
=\big\{f\in H^1(\Omega,\CC^N): \ \cP_- f=0 \text{ on }\partial\Omega\big\}
=\dom A,
\]
and for any $f\in\dom A$ one has
$\lim\limits_{M\to+\infty}k_{M}(f,f)=\|A f\|^2_{L^2(\Omega,\CC^N)}$.
Hence, the monotone convergence principle implies
$\lim\limits_{M\to+\infty} E_j(K_{M})=E_j(A^2)$, see \cite{simon, weid} for the general theory or \cite[Prop.~7]{mobp} for the most adapted formulation. The substitution into \eqref{low4} gives the required lower bound
$\liminf\limits_{M\to+\infty} E_j(B_{M}^2)\ge E_j(A^2)$.

\begin{remark}\label{rmk-end}
While the boundedness of $H$ is explicitly used in Lemma~\ref{lem-low}, we believe that this assumption
is technical and can probably be avoided by applying more advanced proof methods. It is implicitly supported by the fact
that the boundary integral containing $H$ in the expression for $\|A^\Omega_m f\|^2$ in Theorem~\ref{thm2} turns out to be always finite, while $H$ can be unbounded. The proof of Lemma~\ref{lem-low}
does not exploit the tangential derivatives, as they are dropped in the step \eqref{ders}. One may expect
that these tangential derivatives can actually compensate the unbounded curvature terms (using the idea that a Schr\"odinger
operator with an unbounded negative potential may still remains lower semibounded). Nevertheless such an improvement
would require a very advanced analysis (like a control of unbounded curvatures near the singular boundary), which goes significantly beyond the scope of the present work.
 \end{remark}

\appendix

\section{Dirac matrices}\label{appa}

The most frequently used choices for the Dirac matrices $\alpha_k$ and $\beta$ entering \eqref{eqedm} are based on the Pauli matrices
\[
\sigma_1:=\begin{pmatrix}
	0 & 1\\
	1 & 0
\end{pmatrix},
\quad
\sigma_2:=\begin{pmatrix}
	0 & -\rmi\\
	\rmi & 0
\end{pmatrix},
\quad
\sigma_3:=\begin{pmatrix}
	1 & 0\\
	0 & -1
\end{pmatrix}.
\]
For $n=2$ one has $N=2$, and the standard choice is $\alpha_k:=\sigma_k$ for $k\in\{1,2\}$ and $\beta:=\sigma_3$.
For $n=3$ one has $N=4$, and one often uses
\[
\alpha_k:=\begin{pmatrix}
	0 & \sigma_k\\
	\sigma_k & 0
\end{pmatrix},
\quad k\in\{1,2,3\},
\quad
\beta:=\begin{pmatrix}
	I_2 & 0\\
	0 & -I_2
\end{pmatrix},
\]
where $I_2$ is the $2\times 2$ identity matrix. For higher dimensions
one can use various iterative procedures: see e.g. \cite[Chapter 15]{dg} or \cite[Appendix~E]{wit}
for a general theory, or \cite[Sec.~2.1]{LOB} for a more specific description. We remark that the choice of $\alpha_k$ and $\beta$
is unique up to similarity transforms and sign changes, as described in the aforementioned references,
and all associated Dirac operators are unitarily equivalent to each other.

\section*{Acknowledgments}
The work was partially supported by the Deutsche Forschungsgemeinschaft (DFG, German Research Foundation), project 491606144.

\end{document}